\newcommand{\comment}[1]{}
\newtheorem{theorem}{Theorem}
\numberwithin{theorem}{section}
\newtheorem{definition}[theorem]{Definition}
\newtheorem{lemma}[theorem]{Lemma}
\newtheorem{example}[theorem]{Example}
\newtheorem{proposition}[theorem]{Proposition}
\newtheorem{corollary}[theorem]{Corollary}
\newtheorem{notation}[theorem]{Notation}
\newcommand{\aG}{\widehat{G}}
\newcommand{\cL}{\mathcal{L}}
\newcommand{\cA}{\mathcal{A}}
\newcommand{\cB}{\mathcal{B}}
\newcommand{\cC}{\mathcal{C}}
\def\Z{{\mathbb Z}}
\def\N{{\mathbb N}}
\def\R{{\mathbb R}}
\def\S{{\mathbb S}}
\def\sgn{{\operatorname{sgn}}}
\newcommand{\good}{triangle-regular }
\renewcommand{\phi}{\varphi}
\def\P{{\mathcal P}}
\def\F{{\mathcal F}}
\newcommand{\Aut}{\operatorname{Aut}}
\newcommand{\wt}{\operatorname{wt}}
\begin{document}

\title{Binomiality of colored Gaussian models}

\author{Benjamin Biaggi}
\address{Mathematical Institute, University of Bern, Alpeneggstrasse
22, 3012 Bern, Switzerland}
\email{benjamin.biaggi@unibe.ch}

\author{Jan Draisma}
\address{Mathematical Institute, University of Bern, Sidlerstrasse 5,
3012 Bern, Switzerland}
\email{jan.draisma@unibe.ch}

\author{Magdal\'ena Mi\v{s}inov\'a}
\address{Deparment of Mathematics and Statistics, University of Konstanz, Universitätsstrasse 10, 78464 Konstanz, Germany}
\email{magdalena.misinova@gmail.com}
\thanks{BB is funded by JD's Swiss National Science Foundation project
grant 200021-227864.}

\maketitle
\begin{abstract}
Following earlier work by Coons--Maraj--Misra--Sorea and Misra--Sullivant,
we study colored, undirected Gaussian graphical models, and present a 
necessary and sufficient condition for such a model to have binomial
vanishing ideal. These conditions involve {\em Jordan schemes}, a
variant of {\em association schemes},
well-known structures in algebraic combinatorics. Using association
schemes without transitive group action, we refute the conjecture by
Coons-Maraj-Misra-Sorea that binomiality implies that the color classes
must be orbits under the automorphism group of the colored graph.
\end{abstract}

\section{Introduction}

Throughout applications of algebraic geometry, and in particular in
algebraic statistics, {\em binomial ideals} play a prominent role. On
the one hand, the binomials in such an ideal may have a meaning for
the model under consideration---e.g., as Markov basis used in testing
whether the model fits the data \cite{Diaconis98}---and on the other hand,
binomial ideals are simply easier to compute with than general ideals. As
a consequence, much literature discusses conditions on algebraic models
that guarantee that their defining ideal is binomial, possibly after a
linear coordinate change. For instance, for group-based models in phylogenetics, this
line of research was initiated in \cite{Sturmfels05b}, which led to much
follow-up work on degree bounds (e.g.  \cite{Michalek12,Noren12}). And for
Gaussian graphical models, the topic of this paper, the study of
binomiality was initiated in \cite{MS21}, inspired by conjectures in
\cite{Sturmfels10}. In
\cite{Maraj23} a Lie-algebraic necessary condition was derived for the
existence of a linear coordinate change that makes an ideal binomial,
and \cite{Kahle24} presents a more general algorithm that tests whether an ideal
has this property.

In this paper, following \cite{MS21} and \cite{CMMPS23}, we study
binomiality of the vanishing ideal of colored, undirected, Gaussian
graphical models; we do not consider linear coordinate changes. Thus let $G$ be an undirected graph on the vertex
set $[n]:=\{1,\ldots,n\}$ and let $\lambda: \binom{[n]}{1} \sqcup
\binom{[n]}{2} \to \{1,\ldots,c\}$ be a map, called the coloring. By
abuse of notation, we will often write $G$ rather than $(G,\lambda)$ for
the colored graph. To this combinatorial data, we associate a linear
subspace $\cL_G$ of the space $\S^n$ of real symmetric matrices as follows:
\begin{align*} 
\cL_G:=&\{K=(k_{ij})_{i,j} \in \S^n \mid \forall i,j \in [n]: 
(k_{ij} \neq 0 \Rightarrow i=j \text{ or } \{i,j\} \text{ is an edge in }
G) \text{ and }\\
&\forall i,j,l,m \in [n]: (\lambda(\{i,j\})=\lambda(\{l,m\}) \Rightarrow k_{ij}=k_{lm})\}. 
\end{align*}
If $K \in \cL_G$ is positive definite, then $\Sigma=K^{-1}$ is the {\em
correlation matrix} of an $n$-tuple of jointly Gaussian random variables
with {\em concentration matrix} $K$.  This motivates the definition of
the semialgebraic set
\[ M_G:=\{K^{-1} \mid K \in \cL_G \text{ positive definite}\}, \]
the corresponding statistical model. Let $I_G=I_{G,\lambda} \subseteq
\R[\sigma_{11},\sigma_{12},\ldots,\sigma_{nn}]$ be the ideal of all
polynomials that vanish identically on $M_G$. Our main question is: under
what conditions on $(G,\lambda)$ is $I_{G}$ generated by {\em
binomials}? In this paper, a binomial is a polynomial of the form 
$\sigma^\alpha - \sigma^\beta$ for multi-indices $\alpha$ and $\beta$.

Several simplifications are in order. First, if $G$ is the disjoint union
of subgraphs $G_1$ and $G_2$, then the answer to our question is yes if
and only if it is yes for both $G_1$ and $G_2$. So we may assume that $G$
is connected. Furthermore, from a statistical point of view, it makes
sense to use distinct colors for vertices of $G$ and for edges of $G$,
i.e., to require that the restrictions of $\lambda$ to ${\binom{[n]}{1}}$
and $\binom{[n]}{2}$ have disjoint image. In this setting, our main
result is as follows.

\begin{theorem} \label{thm:Main}
Assume that $G$ is connected and that $\lambda(\{i\}) \neq
\lambda(\{j,l\})$ for all $i,j,l$ with $j \neq l$. Then $I_{G}$ is generated by
binomials if and only if $G$ is a block graph and triangle-regular.
\end{theorem}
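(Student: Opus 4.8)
\emph{Proof plan.} The plan is to localize the problem to the blocks of $G$, to match the coloring condition on each block with a (Jordan) association-scheme structure detected by triangle counts, and then to glue the local pictures along cut vertices.

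The starting point is the parametrization $t\mapsto K(t)^{-1}$ of $M_G$, where $K(t)=\sum_c t_c A_c$ and $A_c\in\S^n$ is the indicator matrix of color class $c$ (diagonal for a vertex color, adjacency for an edge color), so that $t$ ranges over the $c$-dimensional space $\cL_G$. I would study each coordinate $\sigma_{ij}$, as a function on $M_G$, through the Taylor expansion of $K^{-1}$ about a diagonal matrix $D=\sum_v d_v E_{vv}$. For an edge $\{i,j\}$ the first-order term is $-t_{\lambda(\{i,j\})}/(d_id_j)$, and the second-order term is the colored triangle sum $\sum_{l}t_{\lambda(\{i,l\})}t_{\lambda(\{l,j\})}/(d_id_ld_j)$ over common neighbors $l$. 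Hence a pair of equally colored edges can satisfy $\sigma_{ij}-\sigma_{i'j'}\in I_G$ only if these expansions agree: at first order this forces the incident vertex-color multisets to match, and at second order it forces the number of triangles on $\{i,j\}$ with a prescribed unordered pair of side-colors to depend only on $\lambda(\{i,j\})$. The latter is exactly triangle-regularity, phrased as well-definedness of the intersection numbers $p^c_{ab}$. This computation is the engine for both directions.

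For sufficiency I would first treat a single clique. When the intersection numbers are well defined they are the structure constants of a commutative product on $\mathrm{span}(A_c)$, so the $A_c$ span a Bose--Mesner-type algebra closed under inversion; thus $\Sigma=K^{-1}$ lies in this algebra and every $\sigma_{ij}$ is constant on color classes. As inversion is a birational self-map of the algebra, the image is Zariski-dense in the coordinate subspace cut out by the color equalities, so the vanishing ideal of a single colored clique is generated by the linear binomials $\sigma_{ij}-\sigma_{i'j'}$ and $\sigma_{ii}-\sigma_{jj}$. I then glue along cut vertices: in a block graph every minimal separator is a single cut vertex $v$, so decomposability yields the Markov relations $\sigma_{ij}\sigma_{vv}-\sigma_{iv}\sigma_{vj}$ whenever $v$ separates $i$ and $j$. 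The within-block linear binomials together with these cross-block quadratic binomials define a toric variety; since $\overline{M_G}$ is irreducible of dimension equal to the number of color parameters, a dimension count identifies this toric variety with $\overline{M_G}$ and shows $I_G$ is binomial.

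For necessity I argue contrapositively, in two cases. If $G$ is not a block graph it has a block $B$ that is not a clique; being $2$-connected and non-complete, $B$ contains two non-adjacent vertices $i,j$ whose minimal separator $S$ (necessarily contained in $B$) has size $\ge 2$, yielding a conditional-independence relation $i\perp j\mid S$ whose defining almost-principal minor $\det\Sigma_{\{i\}\cup S,\,\{j\}\cup S}$ is irreducible with more than two terms. Since $M_G$ is irreducible, $I_G$ is prime, and a binomial prime ideal defines a toric variety closed under the coordinatewise torus action; I would derive a contradiction by showing that $M_G$ fails this closure property, so no such irreducible relation can live in a binomial prime ideal. If instead $G$ is a block graph but some block is not triangle-regular, then by the expansion above the expected color-equalities $\sigma_{ij}-\sigma_{i'j'}$ do \emph{not} lie in $I_G$, while the first-order data forces any binomial in $I_G$ to be assembled from such color-equalities; tracking the second-order triangle-count discrepancy then exhibits an explicit element of $I_G$ with three terms. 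The main obstacle is precisely this last step: upgrading ``the expected binomials fail'' to ``no binomial generating set exists''. Proving that a coloring violating triangle-regularity forces a genuinely non-binomial generator is where I expect the Jordan-scheme formalism to be indispensable: since the relevant algebra is symmetric but need not arise from a transitive group action, the argument cannot proceed through symmetry of the color classes and must instead be intrinsic, using the intersection numbers and the structure of the associated coherent algebra. This is also what should make it possible to produce a triangle-regular coloring whose classes are not $\Aut(G)$-orbits, refuting the conjecture of Coons--Maraj--Misra--Sorea.
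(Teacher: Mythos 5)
Your overall flavor is right---the low-order expansion of $K^{-1}$ (your Taylor expansion is the paper's walk/Neumann-series formula for $\rho_G(\sigma_{ij})$) and the single-clique step via a Jordan algebra closed under inversion (Jensen's lemma) both match the paper. But there are two genuine gaps. The first is in your sufficiency gluing step: the generating set you propose (within-block linear binomials plus cut-vertex Markov quadrics $\sigma_{ij}\sigma_{vv}-\sigma_{iv}\sigma_{vj}$) does \emph{not} generate $I_G$, and no dimension count can rescue it. Take the path $1-2-3$ with both edges colored $a$ and $\lambda(1)=\lambda(3)=x$, $\lambda(2)=y$: this is a triangle-regular block graph, and $\sigma_{12}-\sigma_{23}$ and $\sigma_{11}-\sigma_{33}$ lie in $I_G$ (the model is invariant under swapping $1$ and $3$), yet your ideal is just $(\sigma_{13}\sigma_{22}-\sigma_{12}\sigma_{23})$, a hypersurface of dimension $5$, while $\overline{M_G}$ has dimension $3$; in particular your ideal contains no nonzero linear forms at all. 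The needed linear generators are the \emph{global} ones $\sigma_{ij}-\sigma_{kl}$ whenever the shortest paths $i\leftrightarrow j$ and $k\leftrightarrow l$ carry the same multiset of vertex and edge colors, possibly in far-apart blocks, and the quadrics are all of Misra--Sullivant's $\sigma_{ij}\sigma_{kl}-\sigma_{ik}\sigma_{jl}$ with matching multisets of path edges. Even after writing these down, the hard content is proving they generate a toric (prime) ideal, namely the kernel of the monomial map $\psi_G$ sending $\sigma_{ij}$ to the product of color variables along $i\leftrightarrow j$; the paper does this via a combinatorial ``moves'' theorem on multisets of shortest paths, which in turn rests on structural results about triangle-regular block graphs (removable clique families, depth functions, quasi-automorphisms). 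Only then does a prime-containment-plus-equal-dimension argument, the rigorous version of your dimension count, close the proof.

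The second gap is the one you name yourself: upgrading ``the expected binomials fail'' to ``no binomial generating set exists.'' The missing idea is elementary and does not require Jordan-scheme theory or primality: if an ideal $J$ is generated by binomials, then for every $p\in J$ and every monomial $\sigma^\alpha$ occurring in $p$ there is another monomial $\sigma^\beta$ occurring in $p$ with $\sigma^\alpha-\sigma^\beta\in J$. The paper then constructs explicit elements of $I_G$---differences of minors of $\Sigma$ shown to vanish using Talaska's flow formula for minors of a weighted path matrix---and shows, by comparing the few lowest edge-degrees of $\rho_G$ applied to the monomials, that some monomial of the constructed polynomial cannot be paired with any other; this handles failure of block-ness, vertex-regularity, edge-regularity, and edge triangle-regularity uniformly. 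Your alternative route for the non-block case (``$I_G$ is prime, binomial prime ideals are toric, toric varieties are torus-invariant, and $M_G$ fails invariance'') is problematic as stated: the subtorus under which a binomial prime variety is invariant is determined by the unknown lattice of binomials, so the invariance you must refute is not pinned down, and pure-difference prime ideals over $\R$ carry additional subtleties. Your second-order triangle-count expansion does detect the right invariant, but without the pairing lemma and the vanishing determinantal polynomials you have no mechanism to convert a triangle-count discrepancy into a contradiction with binomiality.
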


We will give precise definitions of {\em block graph} and {\em
triangle-regular} below. When all colors are distinct, i.e.,
$\lambda(\{i,j\})=\lambda(\{l,m\})$ if and only if $\{i,j\}=\{l,m\}$,
then triangle regularity is automatic. In this setting, the implication
$\Leftarrow$ was proved in \cite{MS21}, and an explicit generating
set of quadratic binomials for $I_G$ was found \cite[Theorem 5]{MS21}.
The implication $\Leftarrow$ and explicit quadratic binomial generators
were then generalised to RCOP block graphs in \cite{CMMPS23}. The RCOP
condition says that the color classes are orbits under the automorphism
group of the colored graph $(G,\lambda)$; we will see below that this
implies triangle regularity. Our main contributions are finding the
combinatorial condition of triangle regularity and, armed with this,
the proofs of both implications $\Leftrightarrow$. As an application of
the arrow $\Leftarrow$ in Theorem~\ref{thm:Main}, we show how to disprove \cite[Conjecture 7.8]{CMMPS23}:

\begin{corollary}
There exist colorings $\lambda$ of the complete graph $G=K_n$ for which
$I_{G,\lambda}$ is binomial but the coloring is not RCOP.
\end{corollary}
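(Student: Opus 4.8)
The plan is to use the forward direction ($\Leftarrow$) of Theorem~\ref{thm:Main}: to disprove the RCOP conjecture, it suffices to exhibit a coloring $\lambda$ of some complete graph $K_n$ that is a block graph (trivially, since $K_n$ is $2$-connected and hence a single block) and triangle-regular, yet whose color classes are \emph{not} orbits of $\Aut(G,\lambda)$. Since $K_n$ is automatically a block graph, the entire task reduces to finding a coloring that is triangle-regular but not RCOP. My strategy is to build such a coloring from the combinatorial structures flagged in the abstract—association schemes without a transitive automorphism group. The key insight the abstract telegraphs is that triangle-regularity is a purely numerical/combinatorial condition (counting triangles with prescribed edge-colors), whereas the RCOP condition is a group-theoretic one; these coincide for ``nice'' (Schurian) schemes but can be separated precisely when one uses an association scheme whose combinatorial symmetry exceeds its geometric (automorphism-group) symmetry.

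Concretely, first I would recall the definition of triangle-regular once it is given below, and reformulate it as the statement that the edge-coloring of $K_n$ forms (or refines to) an association scheme—or more precisely a Jordan scheme, as the abstract emphasizes—so that the intersection numbers $p_{ij}^k$ counting, for any edge of color $k$, the number of vertices completing it to a triangle with the other two edges colored $i$ and $j$, depend only on $(i,j,k)$ and not on the chosen edge. This is exactly the regularity needed to feed into the $\Leftarrow$ direction of the main theorem. Second, I would invoke a known example from algebraic combinatorics of an association scheme on $n$ points whose combinatorial automorphism group does not act transitively on each color class (equivalently, a non-Schurian scheme, or a scheme whose relations are not single orbitals). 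The classical smallest such examples arise from strongly regular graphs or from the association schemes attached to certain small configurations; I would pick one concrete instance, verify by direct computation that its intersection numbers are well-defined (hence the coloring is triangle-regular), and exhibit two edges in the same color class lying in different $\Aut(G,\lambda)$-orbits.

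The third step is the verification that this scheme's coloring is \emph{not} RCOP: I must check that $\Aut(K_n,\lambda)$, the group of vertex-permutations preserving all edge-colors, fails to act transitively on at least one color class. This is where I expect the main obstacle to lie—not in the triangle-regularity (which follows mechanically from the scheme axioms) but in certifying the \emph{non}-transitivity of the automorphism group. For a well-chosen non-Schurian scheme this is a finite check, but pinning down the full automorphism group of a colored complete graph can be delicate; the cleanest route is to choose an example whose automorphism group is already documented in the literature on association schemes, so that non-transitivity is a citable fact rather than an ad hoc computation. One must also ensure that distinct-color/diagonal-color bookkeeping from the theorem's hypothesis is respected, i.e., that vertex-colors and edge-colors have disjoint image, which is automatic here since we only color edges.

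Finally, with such an example in hand, Theorem~\ref{thm:Main}'s $\Leftarrow$ implication immediately yields that $I_{G,\lambda}$ is binomial (block graph plus triangle-regular), while the non-transitivity shows the coloring is not RCOP, contradicting \cite[Conjecture 7.8]{CMMPS23} and proving the corollary. The conceptual payoff, which I would state explicitly, is that triangle-regularity is the correct combinatorial weakening of RCOP: it captures exactly the symmetry needed for binomiality without demanding that this symmetry be realized by genuine graph automorphisms.
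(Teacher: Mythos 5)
Your proposal is correct and takes essentially the same route as the paper: invoke the $\Leftarrow$ direction of Theorem~\ref{thm:Main} (with $K_n$ trivially a block graph), get triangle regularity mechanically from the intersection numbers of an association scheme, and get failure of RCOP from a scheme whose automorphism group is not transitive on some color class. The paper executes precisely this plan in its simplest instance—a two-class scheme coming from a strongly regular graph $H$ (all vertices colored $1$, edges of $H$ colored $2$, non-edges colored $3$), taking $H$ to be the Shrikhande graph (vertex- but not edge-transitive) or a $25$-vertex strongly regular graph that is not vertex-transitive—thereby supplying the concrete, literature-certified example that your write-up leaves unspecified.
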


\begin{proof}
Recall that a connected, undirected graph $H$ on $[n]$ is {\em strongly
regular} with parameters $(k,a,b)$ if it is $k$-regular (each vertex has
$k$ neighbors), any two adjacent vertices have $a$ common neighbors, and
any two non-adjacent vertices have $b$ common neighbors.  We now
color $G:=K_n$ by giving all vertices the color $1$, all edges in $H$
the color $2$, and all non-edges in $H$ the color $3$. The strong
regularity of $H$ implies that the colored graph $G$ 
satisfies triangle regularity, and $G=K_n$ is trivially a block
graph, hence by Theorem~\ref{thm:Main} the three-dimensional model $M$
has the property that $I_G$ is binomial.

Now take $H$ to be such that its automorphism group $\Aut(H)$ does
not act transitively on vertices, or not transitively on edges. Such
strongly regular graphs exist: the smallest (in terms of $n$) which is not
edge-transitive is the Shrikhande graph (see
Example~\ref{ex:Shrikhande}), and the smallest which are not
vertex-transitive have 25 vertices (see \cite{SRG} and the references
there). Now $\Aut(H)$ is also the automorphism group of the colored
graph $G$, and hence $G$ is not RCOP.
\end{proof}

The remainder of this paper is organised as follows: In
Section~\ref{sec:Definitions} we give all relevant definitions and derive
algebraic-combinatorial preliminary results. In Section~\ref{sec:Theorem}
we state a more complete version of Theorem~\ref{thm:Main}, namely,
Theorem~\ref{thm:CharToric}, which also gives explicit binomial generators
of $I_G$. In Section~\ref{sec:Necessary} we prove that triangle
regularity is a necessary condition for $I_G$ to be binomial. In
Section~\ref{sec:CombProp} we derive combinatorial properties of
triangle-regular graphs, which are then used in Section~\ref{sec:Gens}
to show that the binomial generators from Theorem~\ref{thm:CharToric} do
indeed generate $I_G$. It should be mentioned that those generators are the
same as those in \cite{CMMPS23}, which in turn are {\em mutatis mutandi}
those of \cite{MS21}, and much of our proof in Section~\ref{sec:Gens}
is inspired by their work. But we have found numerous combinatorial
shortcuts, which in particular avoid the use of transitive group actions.

\section{Definitions and preliminaries} \label{sec:Definitions}

\subsection{The vanishing ideal of a colored Gaussian model}

By a {\em graph} $G$, we will always mean a finite, undirected graph
without loops or multiple edges, 
and we write $V(G)$ for its set of vertices and $E(G) \subseteq
\binom{V(G)}{2}$ for its set of
edges. Assume, furthermore, that the vertices and edges of $G$ are
colored. We denote by $\lambda (i)$ the
color of a vertex and by $\lambda (\{i,j\})$ the color of an edge, and
we assume that edges and vertices have disjoint colors. Now $G$
defines a linear subspace $\mathcal{L}_G$ of the space 
$\S ^{V(G)}$ of symmetric $V(G) \times V(G)$-matrices, 
where the entries $k_{ij}$ of matrices $K \in \mathcal{L}_{G}$ must satisfy the following equations:
\begin{itemize}
    \item $k_{ij} = 0$ if $i \neq j$ and $\{i,j\}\not \in E(G)$ ,
    \item $k_{ii} = k_{jj}$ if $\lambda (i) = \lambda (j)$,
    \item $k_{ij} = k_{i'j'}$ if $\lambda (\{i,j\}) = \lambda (\{i',j'\})$.
\end{itemize}
The matrix $K $ is called concentration matrix. We define the set of symmetric matrices $\mathcal{L}_G^{-1}$ by
\[
\mathcal{L}_G ^{-1} = \overline{\{K^{-1} \mid K \in \mathcal{L}_G
\text{ invertible}\}},
\]
where the closure is the same when we work in the euclidean topology
as when we work in the Zariski topology. Note that
$\mathcal{L}_G^{-1}$ is the Zariski closure of the model $M_G$ from
the introduction. 

In this paper, we study the homogeneous ideal of polynomials
in 
\[\R [\Sigma] = \R [\sigma_{ij} \mid i,j \in
V(G)]/(\{\sigma_{ij}-\sigma_{ji} \mid i,j \in V(G)\}) 
\]
 that vanish on
$\mathcal{L}_G^{-1}$. We denote this ideal by $I_G$.  This ideal is the
kernel of the rational map
\[
  \rho _G : \R[\Sigma]  \to \R (K), \ 
  \sigma_{ij}  \mapsto \frac{(-1)^{i+j}K_{ij}}{\det (K)},  
\]
where $K_{ij} $ is the $(i,j)$ minor of $K$, and where $\R(K)$ is the
fraction field of the coordinate ring of $\cL_G$, i.e., of the
ring
\begin{align}
\R[(&k_{ij})_{i,j}]/(
\{k_{ij}-k_{ji} \mid i,j \in V(G)\} \cup 
\{k_{ij} \mid i \neq j \text{ and } \{i,j\} \text{ not an edge in
$G$}\} \cup \label{eq:RingK} \\
&\{k_{ii}-k_{jj} \mid \lambda(i)=\lambda(j)\} \cup
\{k_{ij}-k_{i'j'} \mid i \neq j, i' \neq j', \text{ and }
\lambda(\{i,j\})=\lambda(\{i',j'\})\}). \notag 
\end{align}

\subsection{Block graphs}

We start with a recursive definition of block graphs.

\begin{definition}
    A connected graph $G$ is called a {\em block graph} if it is either
    complete or else admits a partition $(A,B,\{c\}
    ) $ of its vertex set into three nonempty subsets such that $G$ has
    no edges connecting a vertex in $A$ to a vertex in $B$ and such that
    the graphs on $A \cup \{c\}$ and $B \cup \{c\}$ induced by $G$
    are themselves block graphs.
\end{definition}

A block graph is also called a {\em 1-clique sum of complete graphs}.
The vertex $c$ is said to {\em separate} $A$ and $B$.

\begin{definition}
A {\em path} in a graph $G$ is a connected, induced subgraph of $G$ without
cycles. 
\end{definition}

If $P$ is a path in $G$, then we can enumerate $V(P)$ as
$\{v_0,\ldots,v_k\}$ in such a manner that $E(P)=\{e_1,\ldots,e_k\}$
with $e_i=\{v_{i-1},v_i\}$. We then call $\ell(P):=k \geq 0$ the {\em
length} of $P$ and also write $(v_0,e_1,v_1,\ldots,v_{k-1},e_k,v_k)$ for
$P$. Note that in fact this also equals $(v_k,e_k,\ldots,e_1,v_0)$. We
call $v_0$ and $v_k$ the {\em endpoints} of $P$, and say that $P$ is a
path {\em between} or {\em connecting} $v_0$ and $v_k$.

\begin{proposition}[{\cite[Proposition~2]{MS21}}]
    Let $G$ be a connected block graph. Then for any two vertices $u$, $v$ in the graph, there is a unique shortest path in $G$, denoted $u \leftrightarrow v$, that connects $u$ to $v$.
\end{proposition}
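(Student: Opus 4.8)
The plan is to induct on the number of vertices $n = |V(G)|$, following the recursive structure of the definition of block graph. For the base case, suppose $G$ is complete. If $u = v$ the unique shortest path is the trivial path of length $0$, and if $u \neq v$ then $u$ and $v$ are adjacent, so the single edge $\{u,v\}$ is a path of length $1$; any other path between them has length at least $2$, so this shortest path is unique. For the inductive step, assume $G$ is not complete, and fix a partition $(A,B,\{c\})$ as in the definition, so that $G_A := G[A \cup \{c\}]$ and $G_B := G[B \cup \{c\}]$ are block graphs on fewer vertices, for which existence and uniqueness of shortest paths hold by the inductive hypothesis.

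The central observation I would record first is that, because $G$ has no edges between $A$ and $B$, every neighbor of a vertex in $B$ lies in $B \cup \{c\}$; hence $c$ is a cut vertex separating $A$ from $B$, and since any path (being a cycle-free induced subgraph) visits $c$ at most once, no path can enter $B$ from $A$ and return. I would then split into three cases according to where $u,v$ lie. If both $u,v \in A \cup \{c\}$, then any $u$--$v$ path that entered $B$ would have to cross $c$ twice, which is impossible; so every $u$--$v$ path lies entirely in $G_A$, the distances in $G$ and in $G_A$ agree, and the unique shortest path is the one provided by induction in $G_A$. The case $u,v \in B \cup \{c\}$ is symmetric.

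The remaining \emph{mixed} case, with $u \in A$ and $v \in B$, is the main point of the argument. Here every $u$--$v$ path $P$ must pass through $c$, and splitting $P$ at its unique visit to $c$ writes $P = P_1 \cup P_2$ with $P_1$ a $u$--$c$ path inside $A \cup \{c\}$ and $P_2$ a $c$--$v$ path inside $B \cup \{c\}$, whence $\ell(P) = \ell(P_1) + \ell(P_2) \geq d_{G_A}(u,c) + d_{G_B}(c,v)$. Equality forces $P_1$ and $P_2$ to be shortest paths in $G_A$ and $G_B$ respectively, which are unique by induction; conversely their concatenation realises the minimum, so the shortest $u$--$v$ path is the unique path $(u \leftrightarrow c \text{ in } G_A)$ followed by $(c \leftrightarrow v \text{ in } G_B)$. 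I expect the one genuinely delicate step to be verifying that this concatenation is itself a legitimate path in the sense of the paper, i.e. a connected, cycle-free \emph{induced} subgraph: it is clearly connected and acyclic, the two pieces meet only in $c$ since $A \cap B = \emptyset$, and there is no chord between a vertex of $P_1 \setminus \{c\} \subseteq A$ and a vertex of $P_2 \setminus \{c\} \subseteq B$ precisely because $G$ has no $A$--$B$ edges, so inducedness follows from inducedness of $P_1$ and $P_2$. This completes the induction.
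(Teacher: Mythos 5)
Your proof is correct. Note that the paper itself gives no argument for this proposition: it is imported verbatim as \cite[Proposition~2]{MS21}, so there is no internal proof to compare against. Your induction along the recursive definition of block graphs --- splitting at the separating vertex $c$, reducing to the block graphs $G[A\cup\{c\}]$ and $G[B\cup\{c\}]$, and in the mixed case writing every $u$--$v$ path as a $u$--$c$ piece in $A\cup\{c\}$ concatenated with a $c$--$v$ piece in $B\cup\{c\}$ --- is a clean, self-contained route, and it meshes well with how the paper actually uses the recursion elsewhere (e.g.\ Corollary~\ref{Cor:shortest-path-criterion} and the $\kappa$-induction in Section~\ref{sec:CombProp}). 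Two details you handled that are genuinely needed under this paper's conventions: first, since a ``path'' here is a connected, induced, acyclic subgraph, the concatenation step does require checking inducedness, and your observation that the absence of $A$--$B$ edges rules out chords is exactly the right point; second, your degree count at $c$ (degree $1+1=2$, possible only because $u\neq c\neq v$ in the mixed case) is what guarantees the glued subgraph is a path and not merely a tree. The only cosmetic caveat is in your base case: with the induced-subgraph definition of path, a complete graph admits no $u$--$v$ paths of length $\geq 2$ at all (any three vertices induce a triangle), so uniqueness there is even more immediate than ``any other path has length at least $2$''; this does not affect the validity of the argument.
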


\begin{definition}
    We call a path $P$ in a block graph $G$ a \emph{shortest path}, if there are $u,v\in V(G)$ such that $P$ is the shortest path between $u$ and $v$.
\end{definition}

The following lemma directly follows from the definition of block graphs:
\begin{lemma}\label{lemma:path-clique-edge}
    Let $P$ be a shortest path in a block graph $G$ and $C$ a maximal
    clique in $G$. Then $|E(P)\cap E(C)| \leq 1$. This implies that if
    $P = (v_0, e_1, v_1, \ldots , e_{k}, v_k)$ is a shortest path with
    $k \geq 2$ and $e_i \in E(C)$ for $i \neq 1,k$, then both
    $v_{i-1}$,$v_{i}$ are contained in at least one other maximal
    clique $C' \neq C$, and the same is true for $e_1$ and $v_1$ and
    $e_k$ and $v_{k-1}$. \hfill $\square$
\end{lemma}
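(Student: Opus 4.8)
The plan is to reduce both assertions to two structural facts and then read everything off. The first fact is definitional: a path $P$ is an \emph{induced} subgraph, hence chordless, so whenever two vertices of $P$ are joined by an edge of $G$ that edge must be one of the $e_l=\{v_{l-1},v_l\}$. The second fact is the only place where the recursive definition of a block graph is genuinely needed: two distinct maximal cliques meet in at most one vertex, equivalently \emph{every edge lies in a unique maximal clique}. I would prove this by induction along the partition $(A,B,\{c\})$ from the definition; the base case is a complete graph with a single maximal clique, and in the inductive step every edge of $G$ lies inside $A\cup\{c\}$ or inside $B\cup\{c\}$ (there are no edges between $A$ and $B$), and likewise every clique lies inside one of these two parts, so uniqueness of the maximal clique through an edge follows from the inductive hypothesis for the two smaller block graphs.

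For the inequality $|E(P)\cap E(C)|\le 1$ I would argue by contradiction. Suppose $e_i,e_j\in E(P)\cap E(C)$ with $i<j$. Then $v_{i-1}$ and $v_j$ both lie in the clique $C$, and they are distinct since the vertices of a path are pairwise distinct. As $j\ge i+1$, we have $j-(i-1)\ge 2$, so $v_{i-1}$ and $v_j$ are non-consecutive on $P$; yet $\{v_{i-1},v_j\}\in E(G)$ because $C$ is a clique. This is a chord of the induced path $P$, contradicting the first fact. Hence at most one edge of $P$ lies in $E(C)$.

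The implication is then immediate. Suppose $e_i\in E(C)$ with $2\le i\le k-1$. Because $k\ge 2$ the neighboring edges $e_{i-1}=\{v_{i-2},v_{i-1}\}$ and $e_{i+1}=\{v_i,v_{i+1}\}$ both exist, and by the inequality just proved neither lies in $E(C)$. Letting $C'$ be the unique maximal clique through $e_{i-1}$ and $C''$ the one through $e_{i+1}$, we get $C',C''\ne C$ while $v_{i-1}\in C'$ and $v_i\in C''$, so each of $v_{i-1},v_i$ lies in a maximal clique other than $C$. (They cannot lie in a common such clique, since the edge $e_i$ already determines its unique maximal clique $C$.) The boundary cases run the same way with a single available neighbor: if $e_1\in E(C)$ then $e_2\notin E(C)$ forces $v_1$ into another maximal clique, and symmetrically if $e_k\in E(C)$ then $e_{k-1}\notin E(C)$ forces $v_{k-1}$ into another maximal clique.

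The only non-formal step is the second structural fact; everything else is bookkeeping with indices together with the chordlessness of induced paths. I therefore expect the induction establishing uniqueness of the maximal clique through an edge—rather than either displayed assertion—to be the genuine content, and the place where the recursive block-graph definition is actually used, so it is the part most worth writing out with care (though the authors may well take it as a known structural property, in which case the whole lemma is routine, as the phrase ``directly follows from the definition'' suggests).
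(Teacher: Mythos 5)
Your proof is correct, and since the paper offers no written proof at all---the lemma is asserted to ``directly follow from the definition of block graphs''---your write-up is exactly the kind of routine verification the authors are alluding to: chordlessness of paths (which in this paper are \emph{induced} subgraphs by definition) rules out two edges of $P$ in one clique, and the fact that each edge of a block graph lies in a unique maximal clique (provable by induction on the recursive definition, as you sketch) yields the second assertion. One minor remark: uniqueness of the maximal clique through $e_{i-1}$ is not even needed for the conclusion---any maximal clique containing $e_{i-1}$ must differ from $C$ because $e_{i-1}\notin E(C)$ by the first part---so your argument could be trimmed slightly, but as written it is sound.
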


\begin{lemma}{\cite[Lemma~4.4]{CMMPS23}}
\label{lemma:glueing_shortest_paths_same_edge}
    Let $G$ be a block graph, let $k$ be a nonnegative integer, and let $P = (v_0,e_1,v_1,
    \ldots, e_{k+1},v_{k+1})$ and $Q = (v_k,e_{k+1},v_{k+1}, \ldots, e_n,v_n)$ be two shortest paths that share the edge $e_{k+1}$. Then their union $(v_0,e_1,v_1,\ldots ,e_n,v_n)$ is the shortest path $v_0 \leftrightarrow v_n$.
\end{lemma}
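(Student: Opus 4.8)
The plan is to show that the concatenation $U := (v_0, e_1, v_1, \ldots, e_n, v_n)$ is an honest induced path, and then to invoke the general principle that in a block graph every induced path is already a shortest path; combined with the uniqueness of shortest paths this forces $U = v_0 \leftrightarrow v_n$. At the outset I would dispose of the degenerate cases $k=0$ and $n=k+1$, where $P$ or $Q$ is the single edge $e_{k+1}$ and $U$ coincides with the other path, which is a shortest path by hypothesis. So assume $k \geq 1$ and $n \geq k+2$.

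The heart of the argument is to prove that $U$ has no repeated vertices and no chords (recall that, being shortest paths, $P$ and $Q$ are in particular induced). Let $C$ be the unique maximal clique containing the shared edge $e_{k+1} = \{v_k, v_{k+1}\}$. Because $C$ is a clique and $P$ is induced, any two vertices of $P$ lying in $C$ must be consecutive on $P$, so $V(P) \cap V(C) = \{v_k, v_{k+1}\}$ (equivalently, $e_{k+1}$ is the only edge of $P$ in $C$, cf.\ Lemma~\ref{lemma:path-clique-edge}), and likewise $V(Q) \cap V(C) = \{v_k, v_{k+1}\}$. In particular the edge $e_k = \{v_{k-1}, v_k\}$ lies in a maximal clique $C' \neq C$, so $v_k$ belongs to two distinct maximal cliques and is therefore a cut vertex of $G$. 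Let $D$ be the connected component of $G - v_k$ that contains $v_{k-1}$. Then the connected tail $(v_0, \ldots, v_{k-1})$ avoids $v_k$ and hence lies inside $D$, whereas the connected tail $(v_{k+1}, \ldots, v_n)$ also avoids $v_k$ but contains $v_{k+1} \in C \setminus \{v_k\}$, which lies in a different component of $G - v_k$; hence $(v_{k+1}, \ldots, v_n)$ lies entirely outside $D$. This immediately gives $\{v_0, \ldots, v_{k-1}\} \cap \{v_{k+2}, \ldots, v_n\} = \emptyset$, so $U$ has no repeated vertices, and it excludes every chord $\{v_a, v_b\}$ with $a \leq k-1$ and $b \geq k+2$, since such an edge would join $D$ to another component of $G - v_k$. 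All remaining potential chords of $U$ lie within $P$ or within $Q$ and are excluded by inducedness. Thus $U$ is an induced path from $v_0$ to $v_n$.

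It then remains to prove the clean sublemma that an induced path $U$ in a block graph is a shortest path. If it were not, the shortest path $R = v_0 \leftrightarrow v_n$ would differ from $U$, so $E(U) \cup E(R)$ would contain a cycle $Z$. In a block graph every cycle lies inside a single maximal clique, say $Z \subseteq C_0$, because the maximal cliques are exactly the $2$-connected blocks. But an induced path—and, by Lemma~\ref{lemma:path-clique-edge}, a shortest path—contains at most one edge inside any maximal clique, so $E(Z) \subseteq (E(U) \cup E(R)) \cap E(C_0)$ has at most two edges, which is impossible for a cycle. Hence $U = R$, and by uniqueness of shortest paths $U$ is the shortest path $v_0 \leftrightarrow v_n$.

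The main obstacle is the no-collision and no-chord step of the second paragraph: it is precisely here that one must use that $P$ and $Q$ share the \emph{edge} $e_{k+1}$ and not merely a vertex, since it is the cut vertex $v_k$ (and symmetrically $v_{k+1}$) sitting on this edge that pins the two tails to opposite sides of the clique $C$. The block-graph facts I rely on—each edge lies in a unique maximal clique, distinct maximal cliques meet in at most one vertex, and every cycle is confined to one clique—all follow directly from the recursive definition of block graphs given above.
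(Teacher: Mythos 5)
Your proof is correct, but note that it cannot be compared against an argument in the paper in the usual sense: the paper imports this lemma wholesale from \cite[Lemma~4.4]{CMMPS23} and gives no proof at all. What you supply is therefore a self-contained replacement for that citation, organized around two ideas: first, that the concatenation $U$ is an \emph{induced} path, which you obtain by observing that $v_k$ lies in two distinct maximal cliques (the one containing $e_{k+1}$ and the one containing $e_k$), hence is a cut vertex whose removal separates the two tails $(v_0,\ldots,v_{k-1})$ and $(v_{k+1},\ldots,v_n)$ into different components, killing both vertex collisions and cross chords at once; second, the standalone principle that in a block graph every induced path is the unique shortest path between its endpoints, proved by the counting argument that any cycle inside $E(U)\cup E(R)$ would have to sit inside a single maximal clique, to which $U$ and $R$ each contribute at most one edge, leaving at most two edges for a cycle. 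Both steps check out, and your reduction of the degenerate cases $k=0$, $n=k+1$ is clean. The only place where you are terse is the claim that $v_{k-1}$ and $v_{k+1}$ lie in different components of $G-v_k$; this does follow in one line from the facts you list, since a $v_{k-1}$--$v_{k+1}$ path avoiding $v_k$, closed up by the edges $e_k$ and $e_{k+1}$, would form a cycle whose edges meet two distinct maximal cliques, contradicting the confinement of cycles to single cliques, but that line is worth writing out. Your route buys something the paper's citation does not: the paper becomes self-contained, and the sublemma ``induced implies shortest'' is reusable elsewhere, for instance it yields an alternative proof of Corollary~\ref{Cor:shortest-path-criterion}; the cost is length, which is presumably why the authors chose to cite instead.
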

\begin{corollary}
\label{Cor:shortest-path-criterion}
    Let $P$ and $Q$ be two shortest paths in a block graph $G$, both
    of length $\geq 1$ and both having a vertex $v$ as one endpoint. Denote $e$, resp. $f$, the edge incident to $v$ in $P$, resp. $Q$. If $e$ and $f$ are not contained in the same maximal clique, then $P\cup Q$ is again a shortest path.
\end{corollary}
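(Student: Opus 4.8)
The plan is to reduce the statement to two applications of Lemma~\ref{lemma:glueing_shortest_paths_same_edge} by bridging $P$ and $Q$ with a short shortest path through $v$. Write $P=(u_0,e_1,\ldots,e_s,u_s)$ with $u_s=v$, so that $e=e_s=\{p,v\}$ where $p:=u_{s-1}$, and write $Q=(w_0,f_1,\ldots,f_t,w_t)$ with $w_0=v$, so that $f=f_1=\{v,q\}$ where $q:=w_1$; both $p$ and $q$ are well defined since $\ell(P),\ell(Q)\ge 1$. First I would observe that $e\ne f$ (otherwise $e,f$ would lie in a common maximal clique trivially), so in particular $p\ne q$.

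The key combinatorial step is to show that $p$ and $q$ are \emph{non-adjacent}. Indeed, if $\{p,q\}\in E(G)$, then $\{v,p,q\}$ would be a triangle, and in a block graph every triangle is contained in a single maximal clique $C$: this is immediate from the recursive definition, since whenever one splits $V(G)$ as $(A,B,\{c\})$, pairwise-adjacent vertices cannot be separated between $A$ and $B$, so one recurses into one of the two smaller block graphs. Then $e=\{v,p\}$ and $f=\{v,q\}$ would both be edges of $C$, contradicting the hypothesis. Hence $\{p,q\}\notin E(G)$, and the induced subgraph on $\{p,v,q\}$ is the path $R:=(p,e,v,f,q)$. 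Since $p$ and $q$ are non-adjacent, their distance equals $2=\ell(R)$, so by the uniqueness of shortest paths in block graphs $R$ is the shortest path $p\leftrightarrow q$; in particular $R$ is a shortest path.

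With $R$ in hand, I would apply Lemma~\ref{lemma:glueing_shortest_paths_same_edge} twice. First, $P$ and $R$ are shortest paths sharing the edge $e=\{p,v\}$, with $P$ ending $(\ldots,p,e,v)$ and $R$ beginning $(p,e,v,\ldots)$; hence their union $P_1:=(u_0,\ldots,p,v,q)$ is the shortest path $u_0\leftrightarrow q$. Second, $P_1$ and $Q$ are shortest paths sharing the edge $f=\{v,q\}$, with $P_1$ ending $(\ldots,v,f,q)$ and $Q$ beginning $(v,f,q,\ldots)$; hence their union is the shortest path $u_0\leftrightarrow w_t$. Since $P_1\cup Q=P\cup Q$ (the path $R$ only contributed the vertex $q\in V(Q)$ and the edges $e\in E(P)$, $f\in E(Q)$), this shows $P\cup Q$ is a shortest path, as desired.

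I expect no serious obstacle here: the only genuine content is the combinatorial fact that $p$ and $q$ are non-adjacent, after which everything is formal bookkeeping to cast the two gluings into the overlapping configuration required by Lemma~\ref{lemma:glueing_shortest_paths_same_edge}. The one point demanding care is checking that the indexing of $P$ and $R$ (respectively $P_1$ and $Q$) matches the hypothesis of that lemma—namely that the shared edge is simultaneously the last edge of the first path and the first edge of the second—which the setup above arranges by construction.
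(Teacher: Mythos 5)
Your proof is correct, and its skeleton is the same as the paper's: both reduce the statement to the claim that the length-two bridge $(p,e,v,f,q)$ through $v$ is itself a shortest path, and then conclude by applying Lemma~\ref{lemma:glueing_shortest_paths_same_edge} twice (the paper phrases this as applying that lemma ``to the three shortest paths $P$, $u\leftrightarrow w$, $Q$'', which is exactly your two gluings). The genuine difference lies in how the bridge is certified to be shortest. The paper argues top-down: since $e$ and $f$ lie in no common maximal clique, the recursive block-graph structure yields a partition $(A,B,\{v\})$ with the far endpoints of $e$ and $f$ on opposite sides, so every path between them passes through $v$; this separation fact is asserted without proof and strictly speaking needs a small induction on the recursive definition. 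You argue bottom-up: if $p$ and $q$ were adjacent, the triangle on $\{p,v,q\}$ would extend to a maximal clique containing both $e$ and $f$, contradicting the hypothesis; hence $d(p,q)=2$, and the bridge is the (unique) shortest path $p \leftrightarrow q$. Your route is slightly more self-contained---it uses only the trivial fact that any clique extends to a maximal one, plus uniqueness of shortest paths in block graphs---whereas the paper's separation statement is stronger (it controls all paths from $u$ to $w$, not just shortest ones) and reflects the standard structural viewpoint on block graphs. After this step, the bookkeeping with Lemma~\ref{lemma:glueing_shortest_paths_same_edge} is identical in both arguments, and your care in checking that the shared edge is terminal in one path and initial in the other is exactly what the lemma requires.
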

\begin{proof}
    Let $e = \{u,v\}$ and $f= \{v,w\}$. As $e$ and $f$ are not
    contained in the same clique, the vertex set of $G$ admits a
    partition into $(A,B,\{v\})$ with $u \in A, w \in B$ and $v$
    separating $A$ and $B$. Then any path between $u$ and $w$ must
    contain $v$, and hence $(u,e,v,f,w)$ is a (the) shortest path
    between the vertices $u,w$. Now apply Lemma~\ref{lemma:glueing_shortest_paths_same_edge} to the three shortest paths $P, u\leftrightarrow w$ and $Q$.
\end{proof}

For vertices $u,v$, let $d(u,v)$ be the length of a shortest path between
$u$ and $v$. In our proof of Theorem~\ref{thm:Main} and its more precise
version Theorem~\ref{thm:CharToric}, we will recognize block graphs
using the following characterization.

\begin{proposition}\label{prop:char-of-block-graph}
    A graph $G$ is a block graph if and only if neither of the following two conditions is satisfied:
    \begin{enumerate}
        \item There exist $u,v\in V(G)$ between which there are 
	paths $P \neq Q$ of length $d(u,v)$ that are vertex-disjoint
	except for their endpoints $u,v$.

        \item There exist $u,v\in V(G)$ with $d(u,v)\geq 2$ connected
	both by a path $P$ of length $d(u,v)$ and by a path $Q$ of length 
	$d(u,v)+1$, where $P$ and $Q$ are vertex disjoint except for
	their endpoints $u,v$. 
    \end{enumerate}
\end{proposition}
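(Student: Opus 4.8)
The plan is to prove both implications, using throughout the standard fact, readily derived from the recursive definition, that a connected graph is a block graph if and only if it contains no induced diamond $K_4-e$ and no induced cycle of length $\geq 4$; equivalently, every block (maximal $2$-connected subgraph) is a clique, so that every cycle of $G$ lies inside a single clique.

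For the implication ``$\Rightarrow$'' (block graph implies neither (1) nor (2)), I would show that internal vertices of geodesics separate their endpoints. Let $G$ be a block graph, let $u,v$ be at distance $d=d(u,v)\geq 2$, and let $P=(u=x_0,x_1,\ldots,x_d=v)$ be the unique shortest path. For an internal vertex $x_i$, the edges $x_{i-1}x_i$ and $x_ix_{i+1}$ cannot lie in a common maximal clique (otherwise $x_{i-1}\sim x_{i+1}$ would shorten $P$); hence, exactly as in the proof of Corollary~\ref{Cor:shortest-path-criterion}, there is a partition $(A,B,\{x_i\})$ with $x_{i-1}\in A$, $x_{i+1}\in B$ and $x_i$ separating $A$ from $B$, and one checks $u\in A$, $v\in B$. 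Consequently every $u$--$v$ path passes through each internal $x_i$. Any second path $Q$ as in (1) or (2) is internally disjoint from $P$, so it avoids $x_1$, a contradiction. The case $d\leq 1$ of (1) is impossible in a simple graph, and (2) forces $d\geq 2$; this settles ``$\Rightarrow$''.

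For ``$\Leftarrow$'' I prove the contrapositive: if $G$ is not a block graph, then (1) or (2) holds. By the characterization above, $G$ contains an induced diamond or an induced cycle of length $\geq 4$. If $a,b,c,d$ induce a diamond with $c\not\sim d$, then $d(c,d)=2$ and $(c,a,d)$, $(c,b,d)$ are two internally disjoint geodesics, so (1) holds. Otherwise let $Z=(z_0,\ldots,z_{k-1})$ be a \emph{shortest} induced cycle, $k\geq 4$. The key step is to show that $Z$ is \emph{isometric}, i.e. that $d_G(z_i,z_j)$ equals the distance along $Z$ for all $i,j$. Granting this, I take antipodal vertices: for $k=2m$ even, $z_0$ and $z_m$ are joined by the two arcs of $Z$, both geodesics of length $m$ and internally disjoint, giving (1); for $k=2m+1$ odd (so $k\geq 5$ and $m\geq 2$) the two arcs between $z_0$ and $z_m$ have lengths $m=d(z_0,z_m)$ and $m+1$, are internally disjoint, and give (2).

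It remains to prove isometry of the shortest induced cycle, which I expect to be the main obstacle, as it is precisely where distances in $G$ (with possible shortcuts through the rest of the graph) must be reconciled with distances along $Z$. Suppose some pair on $Z$ admits a shortcut and choose one minimizing $s:=d_G(z_a,z_b)$; then $s\geq 2$ since $Z$ is induced. Let $R$ be a $G$-geodesic from $z_a$ to $z_b$. A short argument using minimality of $s$ shows the interior of $R$ avoids $Z$ (an interior vertex of $R$ lying on $Z$ would yield a shortcut of strictly smaller $G$-distance, contradicting the choice of $s$). Then $R$ together with the shorter arc of $Z$ forms a cycle $D$ of length $s+t<k$, where $t=d_Z(z_a,z_b)>s$ and $s+t\leq 2t-1\leq k-1$, and on $V(D)$ the vertices $z_a,z_b$ are non-adjacent. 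Since every cycle of a block graph lies in a single clique, $G[V(D)]$ cannot be a block graph; hence it contains an induced diamond, giving (1) and contradicting our assumption, or an induced cycle of length at most $s+t<k$, contradicting the minimality of $Z$. This contradiction proves isometry and finishes the argument. The delicate points to get right are the avoidance of $Z$ by the interior of $R$ and the bound $s+t<k$, both of which rely on having chosen the shortcut of minimal $G$-distance.
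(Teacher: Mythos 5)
Your argument rests on the opening claim that a connected graph is a block graph if and only if it contains no induced diamond ($K_4$ minus an edge) and no induced cycle of length at least four, described as a ``standard fact, readily derived from the recursive definition.'' This is where the genuine gap lies. The direction you actually need for your contrapositive --- if $G$ is not a block graph, then it contains an induced diamond or an induced cycle of length $\geq 4$ --- is not readily derived from the recursive definition: it is a real theorem (block graphs are exactly the diamond-free chordal graphs), and its natural proof is precisely the first half of the paper's own argument, namely taking a shortest cycle $C$ whose vertex set does not induce a clique and showing, via minimality, that $C$ is either an induced cycle of length $\geq 4$ or a four-cycle with exactly one chord, i.e.\ an induced diamond. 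Worse, you invoke this characterization a \emph{second} time inside the isometry argument, to conclude that $G[V(D)]$ contains an induced diamond or an induced cycle of length less than $k$; so the unproved claim is load-bearing in two places and cannot be quarantined. As written, the core combinatorial difficulty of the proposition has been moved into a black box rather than resolved. (If citing the literature for the diamond-free-chordal characterization were permitted, the proof could be salvaged, but then it should be presented as a citation, not as an immediate consequence of the definition.)

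Granting that characterization, the rest of your proof is correct, and it runs largely parallel to the paper's. Your forward direction --- internal vertices of geodesics are cut vertices, via the same separation fact the paper uses in the proof of Corollary~\ref{Cor:shortest-path-criterion} --- is fine, and is a reasonable alternative to the paper's one-line observation that vertices witnessing (1) or (2) would have to lie in a common 2-connected component while being at distance $\geq 2$. Your isometry step --- choosing a shortcut pair minimizing $s=d_G(z_a,z_b)$, showing the interior of the geodesic avoids $Z$, and bounding $s+t\leq 2t-1\leq k-1$ --- is sound (both delicate points you flag do go through), and the extraction of (1) or (2) from antipodal vertices matches the paper's conclusion. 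But note that this step is in essence the paper's own ``no shortcut'' argument, with one difference: the paper concludes directly that the arc together with the shortcut is a \emph{shorter cycle whose vertex set does not induce a clique}, contradicting the minimality of its chosen cycle. That formulation avoids any second appeal to a forbidden-subgraph characterization, which is exactly why the paper's proof stays self-contained while yours does not.
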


Recall that there are two notions of $k$-connectivity for graphs: vertex
$k$-connectivity and edge $k$-connectivity. We consider only vertex
$k$-connectivity for $k=2$: we say that two vertices are 2-connected if
there exist two paths between them that are vertex disjoint except for the
endpoints. An inclusion-maximal subgraph of a graph $G$ such that every
two of its vertices are 2-connected is a 2-connected component of $G$.

\begin{proof}
    First, we note that $G$ is a block graph if and only if every 2-connected component is a clique.

    Vertices satisfying (1) or (2) need to be in the same 2-connected component and at distance at least two. Hence, if every 2-connected component is a clique, neither of the conditions can be satisfied.

    For the other implication, we assume that there are two
    2-connected vertices at distance at least two, and we need to find
    two (potentially different) vertices satisfying one of the
    conditions in the proposition.

    Consider a shortest cycle $C$ in $G$ such that the graph induced by
    $G$ on the vertices of $C$ is not a clique; there is some such cycle
    by the assumption. We claim that $C$ is either an induced cycle of
    length at least four or a four-cycle with one chord in $G$. Obviously,
    $C$ cannot have length three, so it has length at least four. If
    $C$ is an induced cycle, we are done. Otherwise, it has a chord
    $\{u,v\}$. Consider the two cycles $C_1,C_2$ that share the
    chord and together use all edges of $C$ exactly once. By minimality of $C$, $G$
    induces cliques on the vertex sets of the smaller cycles $C_1$ and $C_2$.  Since $G$
    does not induce a clique on the vertex set of $C$, we find vertices
    $x,y$ in $C_1,C_2$, respectively, with $\{x,y\} \not \in E(G)$. Then
    the graph induced on $\{x,v,y,u\}$ is a $4$-cycle with precisely
    one chord, and minimality of $C$ implies that $V(C)=\{x,v,y,u\}$,
    as desired.

    If $C$ is of length four, then we have vertices satisfying (1), no matter whether there is an additional chord or not. So we can assume that $C$ is an induced cycle.

    We will show that for any two vertices on $C$, there is no shorter
    path between them than the shorter arc of $C$ connecting them. The
    conclusion then follows immediately by taking two opposite or
    nearly opposite vertices on $C$, according as $C$ has even or odd
    length.

    Assume that there are two vertices $u,v\in C$ such that there is a
    shortcut between them: a path $S$ connecting $u,v$ that is shorter
    than the shortest arc $A$ on $C$ between $u,v$. Take $u$ and $v$
    with this property such that $\ell(A)$ is minimal. By minimality
    of $\ell(A)$, $S$ and $A$ are vertex disjoint except at their
    endpoints $u,v$. Hence, $A$ and $S$ together form a cycle $C'$
    shorter than $C$. The graph induced by $G$ on the vertices of $C'$
    is not a clique, since $u$ are not connected in $G$. This
    contradicts minimality of $C$.
\end{proof}

\subsection{Triangle-regular graphs}

\begin{definition}
    A colored graph $G$ is 
    \begin{itemize}
        \item \emph{vertex-regular} if any two vertices of the same
	color are incident with the same multiset of colored edges;
        \item \emph{edge-regular} if any two edges of the same color
	are incident with the same multiset of colored vertices;
        \item \emph{vertex triangle-regular} if any two vertices of
	the same color are incident with the same multiset of colored
	triangles;
        \item \emph{edge triangle-regular} if any two edges of the
	same color are incident with the same multiset of colored
	triangles; and 
        \item \emph{triangle-regular} if it is vertex-regular,
	edge-regular, and edge triangle-regular.
    \end{itemize}
\end{definition}

\begin{lemma} \label{lm:Triangle}
    A triangle-regular graph $G$ is vertex triangle-regular.
\end{lemma}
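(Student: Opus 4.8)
The plan is to prove vertex triangle-regularity in the strong pointwise form: for any two vertices $u,v$ with $\lambda(u)=\lambda(v)=:\gamma$ and any \emph{$v$-rooted colored-triangle type} $\tau$ (recording the root colour $\gamma$, the unordered pair consisting of the two edge-colours at the root together with the colours of the two opposite endpoints, and the colour of the edge opposite the root), the number $N_v(\tau)$ of triangles through $v$ of type $\tau$ depends only on $\gamma$ and $\tau$. This is exactly the assertion that $u$ and $v$ carry the same multiset of colored triangles.

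First I would record a preliminary consequence of vertex- and edge-regularity: for a vertex $v$ of colour $\gamma$, the multiset of pairs $(\lambda(\{v,w\}),\lambda(w))$ over the neighbours $w$ of $v$ depends only on $\gamma$. Indeed, vertex-regularity fixes the multiset of edge-colours at $v$, and for a fixed edge-colour $\alpha$, edge-regularity forces all $\alpha$-edges to share one unordered pair of endpoint colours; since $v$ is a $\gamma$-endpoint, the far colour is the other element of that pair and is thus determined by $\alpha$ and $\gamma$. In particular, the number of edges at $v$ of any prescribed colour $\alpha$ and prescribed far-colour $\delta$ depends only on $\gamma$. The core of the argument is then the double count coming from the fact that every triangle through $v$ contains exactly two edges at $v$:
\[
2N_v(\tau)=\sum_{e\ni v}\#\{T\supseteq e:\ T\text{ is a triangle of $v$-rooted type }\tau\}.
\]
For a single edge $e=\{v,w\}$ the inner count is governed by edge triangle-regularity through $\lambda(e)$, \emph{provided} $v$ is distinguishable among the endpoints of $e$; so if $\tau$ has an arm whose far endpoint has colour $\delta\neq\gamma$, I would root the count at such an arm. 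Summing the edge-triangle-regular contribution over the (controlled) number of edges $e$ at $v$ of that colour and far-colour then yields $N_v(\tau)$ as a quantity depending only on $\gamma$ and $\tau$, after dividing by $2$ in the degenerate event that both arms match the chosen arm-data.

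The hard part is the remaining case, where every endpoint of the triangle has colour $\gamma$, i.e.\ $\tau$ is vertex-monochromatic. Here edge triangle-regularity only controls the \emph{symmetrised} count on each edge—the number of apexes for which the two apex-edges form a prescribed unordered pair of colours—because the two endpoints of an edge between equally-coloured vertices cannot be told apart, so a single edge cannot separate $N_v(\tau)$ from its ``reflected'' companions. The resolution is to write all three symmetrised identities obtained by rooting at the three edge-colours occurring in $\tau$. Writing the arms as $\{\alpha_1,\alpha_2\}$ and the opposite edge as $\beta$ (generic case of three distinct colours), and setting $\tau'=(\{\alpha_1,\beta\};\alpha_2)$ and $\tau''=(\{\alpha_2,\beta\};\alpha_1)$ for the two companion types, rooting at an $\alpha_1$-edge with other-pair $\{\alpha_2,\beta\}$ counts exactly $N_v(\tau)+N_v(\tau')$, rooting at an $\alpha_2$-edge counts $N_v(\tau)+N_v(\tau'')$, and rooting at a $\beta$-edge counts $N_v(\tau')+N_v(\tau'')$; each left-hand side depends only on $\gamma$ by edge triangle-regularity together with the preliminary count of edges. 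The resulting $3\times 3$ system has coefficient matrix $\left(\begin{smallmatrix}1&1&0\\1&0&1\\0&1&1\end{smallmatrix}\right)$, of determinant $-2\neq 0$, so each monochromatic $N_v(\tau)$ is determined by $\gamma$.

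Finally I would dispatch the degenerate monochromatic configurations, where two or three of $\alpha_1,\alpha_2,\beta$ coincide: these involve only one or two distinct companion types and are settled the same way, now reading $N_v$ off directly from one or two symmetrised identities (with the appropriate multiplicity $2$ whenever a colour occurs as both arms). Combining the clean case of the previous paragraph with the monochromatic case shows that $N_v(\tau)$ depends only on $\gamma$ and $\tau$, which is precisely vertex triangle-regularity. I expect the symmetrisation in the monochromatic case—and specifically the need to combine the three edge-rooted identities into an invertible system rather than reading the count off a single edge—to be the main obstacle; the rest is bookkeeping licensed by vertex-, edge-, and edge triangle-regularity.
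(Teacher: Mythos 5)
Your argument is correct, but it proves strictly more than the lemma asks, and the extra layer is precisely what the paper's proof is engineered to avoid. The symmetrised identities at the heart of your double count \emph{are} the paper's entire proof: writing $k_a$ for the number of $a$-coloured edges at $v$ (controlled by vertex regularity) and $t_a$ for the number of triangles with edge-colour multiset $\{a,b,c\}$ through a fixed $a$-coloured edge (well defined by edge triangle-regularity), counting incidences between triangles at $v$ and their two edges at $v$ gives the number of such triangles at $v$ as $\frac{k_a t_a + k_b t_b + k_c t_c}{2}$ (with the obvious variants when colours repeat), which visibly depends only on $\lambda(v)$. The observation that makes this sufficient, and which your proposal does not exploit, is the paper's opening remark: by edge regularity the endpoint colours of an edge are determined by its colour, so a colored triangle is completely determined, \emph{as a colored triangle}, by its multiset of edge colours. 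Hence the multiset of colored triangles incident to $v$ --- which is all that vertex triangle-regularity requires, and all that later applications such as Proposition~\ref{prop:removing-cliques} use --- is captured by these unrooted, symmetrised counts; one never needs to separate $N_v(\tau)$ from its reflected companions, so the monochromatic $3\times 3$ system that you identify as the main obstacle simply never arises. What your route buys in exchange is a genuinely stronger conclusion: the \emph{rooted} counts $N_v(\tau)$ themselves depend only on $\lambda(v)$, i.e.\ equally coloured vertices see the same triangles from their own perspective, not merely the same multiset of isomorphism types. That refinement is sound --- the matrix with rows $(1,1,0)$, $(1,0,1)$, $(0,1,1)$ is indeed invertible, and in the non-monochromatic case the per-edge rooted count does reduce to edge triangle-regularity, though this deserves the one-line check (which you gloss over) that when the apex shares the root's colour the two candidate rootings are related by an automorphism of the colored triangle and so give the same rooted type. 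In short: correct, stronger than needed, and considerably longer than the paper's three-line count.
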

\begin{proof}
    Let $u$ be a vertex and let $\lbrace a,b,c\rbrace$ be a multiset of
    three edge colors.  By edge regularity, a triangle with those edge
    colors is completely determined as a colored triangle; we need to
    show that the number of such triangles incident to $u$ depends
    only on $\lambda(u)$. 
    
    Let $k_a$, $k_b$, and $k_c$ be the number of edges of color $a$,
    $b$, $c$ incident to $u$; by vertex regularity, these numbers
    depend only on $\lambda(u)$. Furthermore, let $t_a$, $t_b$, $t_c$
    be the number of triangles with edge colors $\lbrace a,b,c\rbrace$
    incident to an edge of color $a$, $b$, $c$, respectively.

    If $a$, $b$, $c$ are pairwise distinct, the number of triangles with
    edge colors $\lbrace a,b,c\rbrace$ incident to $u$ is $\frac{k_a\cdot
    t_a+k_b\cdot t_b+k_c\cdot t_c}{2}$. If $a \neq b=c$, then this
    number equals $\frac{k_a\cdot t_a + k_b\cdot
    t_b}{2}$. And if $a=b=c$, then this number equals $\frac{k_a\cdot t_a}{2}$.
\end{proof}

\subsection{Connections to other types of colored graphs}
In \cite{CMMPS23}, RCOP block graphs play a fundamental role. Denote
by $\Gamma (G)$ the group of all automorphisms of $G$ that preserve
the vertex and edge colors. Then $G$ is \emph{RCOP} if 
\begin{enumerate}
    \item the sets of vertex and edge colors are disjoint,
    \item for any vertices $u,v$ of the same color, there exists an
    $\gamma  \in \Gamma (G)$ with $\gamma (v) = u$,
    \item for any edges $e,f$ of the same color, there exists an
    $\gamma \in \Gamma (G)$ with $\gamma (f) = e$.
\end{enumerate}
Clearly, all RCOP graphs are triangle-regular. The converse is not true:
\begin{example} \label{ex:Shrikhande}
    The {\em Shrikhande graph} has vertex set $\Z _4 \times \Z _4$ and
    there is an edge $\{v,u\}$ if $u-v \in \{\pm (1,0), \pm (0,1), \pm
    (1,1),\}$. We define the colored complete graph $G_S$ on $\Z_4
    \times \Z_4$ by 
    \begin{align*}
        \lambda (u ) &= 0 \text{ for all vertices }u \in \Z_4 \times
	\Z_4 \\
        \lambda (\{u,v\}) &=1 \text{ if $\{u,v\}$ is an edge in the Shrikhande graph} \\
        \lambda (\{u,v\}) &=2 \text{ if $\{u,v\}$ is not an edge in the Shrikhande graph}.
    \end{align*}
    The Shrikhande graph is strongly regular, which implies that $G_S$
    is triangle-regular. The automorphism group of the Shrikhande graph
    acts transitively on vertices but not on edges, and this implies
    that $G_S$ is not RCOP.
\end{example}

Triangle-regular graphs are related to well-known combinatorial
structures called coherent configurations; we refer to \cite{Brouwer_coherent} and \cite{Cameron03_coherent_conf} for an introduction to the topic. A \emph{coherent configuration} on a finite set $X$ is a partition $\mathcal{R} = \{R_i \mid i \in I\}$ on $X \times X$ such that 
\begin{enumerate}
    \item There is a subset $H \subset I$ such that $\{R_h \mid h \in H\}$ is a partition of the diagonal $\{(x,x) \mid x \in X\}$
    \item For each $R_i$, the converse $\{(y,x) \mid (x,y) \in R_i\}$ is also in $\mathcal{R}$.
    \item For $i,j,k \in I$ and $(x,y) \in R_k$, the number of $z \in X$ such that $(x,z)\in R_i$ and $(z,y)\in R_j$ is a constant $p_{ij}^k$ that does not depend on the choice of $x,y.$
\end{enumerate}
A coherent configuration is an {\em association scheme} if the diagonal
is a single part in the partition and moreover each $R_i$ is equal to
its converse.

Given a coherent configuration $\mathcal{P}$, the \emph{symmetrization}
$\mathcal{P}^{\operatorname{sym}}$ is the partition of $X$ whose parts
are all unions of the parts of $\mathcal{P}$ and their converse. (This
symmetric partition of $X \times X$ does not need to be a coherent
configuration.) It defines a coloring of the complete graph in the obvious
manner, and it is easy to verify that this coloring is triangle-regular.

A \emph{Jordan scheme} is a symmetric partition $\mathcal{P}$ of $X \times X$ such that the diagonal is one element of the partition and the adjacency matrices ($(A_i)_{x,y} = 1 $ if $(x,y)\in R_i$) of the partition satisfies
\[
A_i A_j + A_j A_i = \sum _k q_{ij}^k A_k
\]
for some numbers $q_{ij}^k$. The name derives from the fact that the
linear span of the $A_i$ is then a Jordan algebra.
We will prove in Proposition~\ref{prop:completeGraphIsJordanAlgebra}
that also the adjacency matrices of triangle-regular graphs satisfy the
above, so all triangle-regular graphs with only one vertex color are
Jordan schemes. The symmetrization of a coherent configuration where
the diagonal is only one element of the partition is a Jordan scheme.
In \cite{KRM2019properjordanschemesexist}, it is showed that not all
Jordan schemes are symmetrizations of coherent configurations,
implying that not all triangle-regular graphs are symmetrizations of coherent configurations.
\begin{example}{\cite{KRM2019properjordanschemesexist}}
    Define the complete graph $J_{15}$ on $[15]$, where the coloring of an edge $(l,m)$ is defined by the $i,j$-th entry of the following symmetric matrix, and every vertex has the same color 0.
    \[
    \begin{pmatrix}
        0 & 1 & 1 & 2 & 3 & 4 & 2 & 3 & 4 & 2 & 3 & 4 & 2 & 3 & 4 \\
        1 & 0 & 1 & 3 & 4 & 2 & 3 & 4 & 2 & 3 & 4 & 2 & 3 & 4 & 2 \\
        1 & 1 & 0 & 4 & 2 & 3 & 4 & 2 & 3 & 4 & 2 & 3 & 4 & 2 & 3 \\     
        2 & 3 & 4 & 0 & 1 & 1 & 3 & 2 & 4 & 2 & 4 & 3 & 4 & 3 & 2 \\
        3 & 4 & 2 & 1 & 0 & 1 & 2 & 4 & 3 & 4 & 3 & 2 & 3 & 2 & 4 \\
        4 & 2 & 3 & 1 & 1 & 0 & 4 & 3 & 2 & 3 & 2 & 4 & 2 & 4 & 3 \\
        2 & 3 & 4 & 3 & 2 & 4 & 0 & 1 & 1 & 4 & 3 & 2 & 2 & 4 & 3 \\
        3 & 4 & 2 & 2 & 4 & 3 & 1 & 0 & 1 & 3 & 2 & 4 & 4 & 3 & 2 \\
        4 & 2 & 3 & 4 & 3 & 2 & 1 & 1 & 0 & 2 & 4 & 3 & 3 & 2 & 4 \\
        2 & 3 & 4 & 2 & 4 & 3 & 4 & 3 & 2 & 0 & 1 & 1 & 3 & 2 & 4 \\
        3 & 4 & 2 & 4 & 3 & 2 & 3 & 2 & 4 & 1 & 0 & 1 & 2 & 4 & 3 \\
        4 & 2 & 3 & 3 & 2 & 4 & 2 & 4 & 3 & 1 & 1 & 0 & 4 & 3 & 2 \\
        2 & 3 & 4 & 4 & 3 & 2 & 2 & 4 & 3 & 3 & 2 & 4 & 0 & 1 & 1 \\
        3 & 4 & 2 & 3 & 2 & 4 & 4 & 3 & 2 & 2 & 4 & 3 & 1 & 0 & 1 \\
        4 & 2 & 3 & 2 & 4 & 3 & 3 & 2 & 4 & 4 & 3 & 2 & 1 & 1 & 0
    \end{pmatrix}
    \]
    The corresponding partition on $[15] \times [15]$ is a Jordan scheme and the graph $J_{15}$ triangle-regular, but the partition is not a symmetrization of a coherent configuration.

    One can check by hand that this graph is triangle-regular. A
    symmetrization of a coherent configuration satisfies the following
    condition on the order of the triangle: If $(x,y)$ and $(u,v)$ are
    two edges with the same color, then the multiset of ordered tuples
    $\{(\lambda(\{x,z\}), \lambda(\{z,y\})) \mid z \in V(G)\}$ equals
    $\{(\lambda(\{u,w\}), \lambda(\{w,v\}) \mid w \in V(G)\}$ or
    $\{(\lambda(\{v,w\}), \lambda(\{w,u\})) \mid w \in V(G)\}$. This is not
    true for $J_{15}$. For example, the edges $(1,2)$ and $(4,5)$ have the same color, but the multisets of ordered pairs are $\{(1,1),(2,3),(2,3),(2,3),(2,3),(3,4),(3,4),(3,4),(3,4),(4,2),(4,2),(4,2),(4,2)\}$ and $\{(1,1),(2,3),(2,4),(2,4),(2,4),(3,2),(3,2),(3,2),(3,4),(4,2),(4,3),(4,3),(4,3)\} $.
\end{example}

\section{Statement of the main theorem} \label{sec:Theorem}

Given a colored block graph $G$ and two vertices $i,j$ of $G$, recall that
we write $i \leftrightarrow j$ for the unique shortest path between $i$
and $j$. We will also write $\Lambda(i \leftrightarrow j)$ for the multiset
of edge and vertex colors seen along this shortest path. With this
notation, we can state our more precise version of Theorem~\ref{thm:Main}.

\begin{theorem}[Characterization of graphs with binomial vanishing ideal]
\label{thm:CharToric}
    Let $G$ be a connected colored graph whose set of vertex colors is
    disjoint from its set of edge colors. The ideal $I_G$ is binomial
    if and only if $G$ is a triangle-regular block graph. Moreover,
    the set of generators of $I_G$ is the union of the linear
    generators
    $$
        \lbrace \sigma_{i,j}-\sigma_{k,l}\mid \Lambda(i\leftrightarrow j) = \Lambda(k\leftrightarrow l)\rbrace
    $$
    and the quadratic generators
    $$
        \lbrace \sigma_{i,j}\sigma_{k,l}-\sigma_{i,k}\sigma_{j,l}\mid
	E(i\leftrightarrow j)\cup  E(k\leftrightarrow l)=
	E(i\leftrightarrow k)\cup  E(j\leftrightarrow l)\rbrace,
    $$
    where the union of edges is taken as multisets. 
\end{theorem}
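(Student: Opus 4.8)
The plan is to handle both implications of Theorem~\ref{thm:CharToric} around a single structural fact about inverses of block-graph concentration matrices: the trek / conditional-independence relation. I would first prove that if $m$ is a cut vertex of $G$ separating $i$ from $j$, then $\sigma_{ij}\sigma_{mm} = \sigma_{im}\sigma_{mj}$ holds identically on $\mathcal{L}_G^{-1}$ (this is the Gaussian statement $i \perp j \mid m$, which for block graphs applies to every internal vertex of a shortest path, since consecutive path edges lie in distinct maximal cliques; cf. Lemma~\ref{lemma:path-clique-edge}). Telescoping along a shortest path $i = v_0, \ldots, v_k = j$ then yields
\[ \sigma_{ij}\prod_{s=1}^{k-1}\sigma_{v_sv_s} = \prod_{s=1}^{k}\sigma_{v_{s-1}v_s}, \]
equivalently, writing $\rho_{ab} := \sigma_{ab}/\sqrt{\sigma_{aa}\sigma_{bb}}$, the multiplicativity $\rho_{ij} = \prod_{e \in E(i\leftrightarrow j)}\rho_e$. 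This identity is the engine of the whole argument, and it is exactly here that the block-graph hypothesis enters, through uniqueness of shortest paths.

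Granting this, the quadratic generators of $J$ (the ideal generated by the listed binomials) lie in $I_G$ for \emph{any} block graph: by multiplicativity both $\rho_{ij}\rho_{kl}$ and $\rho_{ik}\rho_{jl}$ equal the product of $\rho_e$ over the common multiset $E(i\leftrightarrow j)\cup E(k\leftrightarrow l) = E(i\leftrightarrow k)\cup E(j\leftrightarrow l)$, and after clearing the identical denominators $\sqrt{\sigma_{ii}\sigma_{jj}\sigma_{kk}\sigma_{ll}}$ one gets $\sigma_{ij}\sigma_{kl} = \sigma_{ik}\sigma_{jl}$. The linear generators are where triangle-regularity is used. Since $\sigma_{ij} = \sqrt{\sigma_{ii}\sigma_{jj}}\prod_{e\in E(i\leftrightarrow j)}\rho_e$, it suffices to show $\sigma_{ii}$ depends only on $\lambda(i)$ and each $\rho_e$ only on $\lambda(e)$; the hypothesis $\Lambda(i\leftrightarrow j) = \Lambda(k\leftrightarrow l)$ then forces $\sigma_{ij} = \sigma_{kl}$. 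For a single clique this is the Jordan-algebra phenomenon: triangle-regularity makes the colored adjacency matrices span a Jordan algebra $\mathcal{A}\ni I$ (Proposition~\ref{prop:completeGraphIsJordanAlgebra}), so $K\in\mathcal{A}$ forces $K^{-1}\in\mathcal{A}$, whence $\mathcal{L}_G^{-1}$ is again linear and $\sigma_{ij}$ depends only on the color of $\{i,j\}$. I would then bootstrap from cliques to arbitrary block graphs by induction on the clique-tree, using the combinatorial consequences of triangle-regularity from Section~\ref{sec:CombProp} to propagate color-determinacy of the $\sigma_{ii}$ and $\rho_e$ across cut vertices.

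The main obstacle is the reverse inclusion $I_G \subseteq J$. Because $\rho_G$ lands in the domain $\R(K)$, $I_G$ is prime, so I would aim to realize $I_G$, after the linear relations, as the toric (binomial, prime) ideal of the multiplicative parametrization by the $\rho_e$ and the diagonal entries, and to show $J$ already equals it. Concretely, distinct coordinates are indexed by color multisets of shortest paths, and the quadratic relations are the path-exchange moves. I would prove that any binomial $\sigma^\alpha - \sigma^\beta\in I_G$ lies in $J$ by a connectivity argument: the two $\sigma$-monomials, having the same image under $\rho_G$, can be connected by a sequence of the listed quadratic moves modulo the linear relations, each move realized by splitting and regluing shortest paths at a shared edge or a cut vertex --- which is exactly where Lemma~\ref{lemma:glueing_shortest_paths_same_edge} and Corollary~\ref{Cor:shortest-path-criterion} do the surgery. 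Establishing that these moves already suffice, with no higher-degree binomials needed, is the technical heart; it replaces the transitive-group-action arguments of \cite{MS21,CMMPS23} and is the step I expect to be hardest.

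For the forward implication I would argue contrapositively via the grading by coefficient-sum: every binomial $\sigma^\alpha-\sigma^\beta$ has $|\alpha|=|\beta|$, so in a binomially generated homogeneous ideal every degree-one element has coefficient-sum zero. If $G$ is not a block graph, Proposition~\ref{prop:char-of-block-graph} yields either two vertex-disjoint shortest $u$--$v$ paths or a shortest path together with one of length one greater; in either case $\sigma_{uv}$ is no longer a single trek monomial in the correlations but a genuine sum of competing contributions, and I would extract a non-binomial relation (examining directly the forced $4$-cycle, respectively the path-plus-longer-path configuration). If $G$ is a block graph failing triangle-regularity, then already within some clique the coloring is not a Jordan scheme, so $K^{-1}$ leaves the corresponding linear space and $I_G$ acquires a degree-one element of nonzero coefficient-sum (for instance a linear trinomial of the type arising when vertex-regularity fails, as on the path with three vertices), which cannot lie in a binomially generated ideal. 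This necessity analysis is the content of Section~\ref{sec:Necessary}.
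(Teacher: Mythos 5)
Your necessity direction has a decisive, unfixable-as-stated gap. Your certificate of non-binomiality is the coefficient-sum argument (exhibit an element of $I_G$ not vanishing at the all-ones matrix), but this certificate is provably empty in exactly the cases that matter. For \emph{every} connected, vertex-regular colored graph $G$ the all-ones matrix lies in $\cL_G^{-1}$: pick positive weights $w_c$ for the edge colors and let $K$ be the weighted Laplacian, $k_{ij}=-w_{\lambda(\{i,j\})}$ on edges and $k_{ii}=\sum_{j\sim i}w_{\lambda(\{i,j\})}$; vertex regularity makes the diagonal constant on vertex color classes, so $K\in\cL_G$, and $K$ has corank one with kernel spanned by the all-ones vector, so $\mathrm{adj}(K)=\lim_{\epsilon\to 0}\det(K+\epsilon I)\,(K+\epsilon I)^{-1}$ (each term being the inverse of $\det(K+\epsilon I)^{-1}(K+\epsilon I)\in\cL_G$) is, by the matrix-tree theorem, a \emph{nonzero} multiple of the all-ones matrix lying in $\cL_G^{-1}$. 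Since $I_G$ is homogeneous, every element of $I_G$ then vanishes at the all-ones matrix, i.e., has coefficient sum zero. This applies to the uncolored $4$-cycle (the prototype of a non-block graph), and to, e.g., $K_6$ with one vertex color and edges two-colored by a $6$-cycle --- a block graph that is vertex- and edge-regular but not edge triangle-regular. So your grading detects nothing there, and your fallback of ``extracting a non-binomial relation'' is not a certificate at all: binomial ideals contain many non-binomial elements. What is needed is the criterion in the lemma opening Section~\ref{sec:Necessary} --- an element $p\in I_G$ with a monomial that is congruent modulo $I_G$ to no other monomial of $p$ --- and establishing this is the hard content of the paper: Talaska's flow formula (Theorem~\ref{thm:Talaska-2012}) produces determinantal elements of $I_G$, and the low edge-degree analysis of the walk expansion (Proposition~\ref{prop:Sigmaij}) rules out matching monomials, in Propositions~\ref{prop:Block}--\ref{prop:edge-transitivity}. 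Your proposal has no substitute for this. (Also, your specific claim that failure of triangle-regularity yields a \emph{degree-one} element of nonzero coefficient sum is false: for the $3$-path with all vertices of one color and two distinct edge colors, $I_G$ contains no linear forms at all; the obstruction there is a quadratic trinomial such as $\sigma_{12}^2-\sigma_{22}^2+\sigma_{22}\sigma_{33}$.)

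The sufficiency half is closer to the paper, with one genuine gap and one nice difference. The difference: your trek-rule telescoping $\sigma_{ij}\sigma_{mm}=\sigma_{im}\sigma_{mj}$ across cut vertices is a correct, self-contained way to put the quadratic generators into $I_G$, where the paper instead imports Theorem~\ref{thm:block-graph-generators} from the literature; and your ``toric parametrization plus path surgery'' is in essence the paper's $\psi_G$, Theorem~\ref{thm:sets-of-paths} and Corollary~\ref{cor:kerPsiGGenerators} (note you still need the primality-plus-equal-dimension comparison to identify the prime $I_G$ with that toric ideal before its binomials can be said to generate $I_G$). The gap: for the linear generators you apply the Jordan-algebra argument to a single clique and then ``bootstrap across cut vertices,'' but this cannot work clique-locally, because $\sigma_{ii}$ and $\rho_e$ are global functions of $K$ --- they depend on the whole graph, not on the clique containing $i$ or $e$, so the single-clique statement says nothing about equality of $\sigma_{ii}$ and $\sigma_{jj}$ for same-colored vertices in different cliques. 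The paper's engine is to pass to the completion $\hat G$, prove that $\hat G$ is itself triangle-regular --- this is what consumes all of Section~\ref{sec:CombProp} (quasi-automorphisms, Lemmas~\ref{lemma:moving-paths} and~\ref{lemma:comb-equiv-to-iso}) --- and only then apply Proposition~\ref{prop:completeGraphIsJordanAlgebra} and Jensen's theorem to the \emph{complete} graph $\hat G$, which yields all linear relations at once (Corollary~\ref{cor:completion-eqs}). Without that completion step your induction has nothing to run on.
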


The rest of the paper is devoted to proving this theorem. 

\section{The only graphs with a binomial vanishing ideal are triangle
regular block graphs} \label{sec:Necessary}

The implication $\Rightarrow$ in Theorem~\ref{thm:CharToric} is proved by
contraposition. Here is the proof strategy: if $G$ is a connected colored
graph but not a block \good graph, then we will exhibit some configuration
in $G$, from which we construct a polynomial $p$ in $I_G$. To show
that $p$ is {\em not} in the ideal generated by the binomials in $I_G$,
we choose one of the monomials $\sigma^\alpha$ in $p$. Then we compute
$\rho_G(\sigma^\alpha)$ using the formula for $\rho_G(\sigma_{i,j})$
below, consider a few lowest degrees of edge-variables and show that
no other monomial in $p$ can have those degrees the same. Thus, we find
that $p$ violates the following easy lemma.

\begin{lemma}
Let $J$ be an ideal in a polynomial ring in variables
$x_1,\ldots,x_n$. Then $J$ is generated by binomials if and only if it
has the following property: for any nonzero $p \in J$ and any monomial
$x^\alpha$ with a nonzero coefficient in $p$, there exists another
monomial $x^\beta \neq x^\alpha$ with a nonzero coefficient in $p$,
such that $x^\alpha - x^\beta \in J$. \hfill $\square$
\end{lemma}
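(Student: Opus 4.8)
The plan is to prove both directions of this standard characterization of binomial ideals via the stated monomial-exchange property.

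For the forward direction, suppose $J$ is generated by binomials $b_1, \ldots, b_r$ with $b_s = x^{\gamma_s} - x^{\delta_s}$. Let $p \in J$ be nonzero with $x^\alpha$ appearing with nonzero coefficient. I would argue as follows: write $p = \sum_s h_s b_s$ for polynomials $h_s$, and expand this into a $\K$-linear combination of terms of the form $x^\mu(x^{\gamma_s} - x^{\delta_s})$ where $x^\mu$ ranges over the monomials of $h_s$. Consider the graph (or equivalence relation) on monomials generated by declaring $x^{\mu+\gamma_s}$ and $x^{\mu+\delta_s}$ to be adjacent for every such term. Each generator-multiple $x^\mu b_s = x^{\mu+\gamma_s} - x^{\mu+\delta_s}$ lies in $J$, and the difference of any two monomials in the same connected component is a telescoping sum of such differences, hence lies in $J$. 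The key observation is that $p$, written in this expanded form, is supported on these edges; if $x^\alpha$ were the unique monomial in its connected component that survives with nonzero coefficient after cancellation, then summing the coefficient of $p$ over that whole component would force the coefficient of $x^\alpha$ to be zero, a contradiction. Therefore some other monomial $x^\beta \neq x^\alpha$ with nonzero coefficient lies in the same component, and $x^\alpha - x^\beta \in J$ as required.

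For the converse, suppose $J$ has the monomial-exchange property; I want to show $J$ is generated by the set $B$ of all binomials it contains. Let $p \in J$ be arbitrary and nonzero; I would show $p$ lies in the ideal $(B)$ by induction on the number of distinct monomials appearing in $p$. Pick any monomial $x^\alpha$ in $p$ with nonzero coefficient $c_\alpha$; by hypothesis there is another monomial $x^\beta \neq x^\alpha$ in $p$ with $x^\alpha - x^\beta \in J$, hence $x^\alpha - x^\beta \in B \subseteq (B)$. Then $p' := p - c_\alpha(x^\alpha - x^\beta)$ lies in $J$, has no $x^\alpha$ term, and introduces no new monomials, so it has strictly fewer distinct monomials than $p$. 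By induction $p' \in (B)$, and therefore $p = p' + c_\alpha(x^\alpha - x^\beta) \in (B)$.

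The main subtlety to handle carefully is the cancellation bookkeeping in the forward direction: the expanded representation $\sum_s h_s b_s$ involves many monomial differences, and one must argue at the level of the equivalence classes (connected components) of monomials under the adjacency generated by the binomial multiples, rather than naively tracking individual terms. The clean way to phrase this is that for each connected component the sum of coefficients of $p$ over that component vanishes, because $p$ is a $\K$-combination of differences $x^{\mu+\gamma_s} - x^{\mu+\delta_s}$ each of which has total coefficient-sum zero within its component. Since the coefficient of $x^\alpha$ is nonzero, its component cannot be a singleton in the support of $p$, which is exactly the exchange partner we need. I expect this balancing argument to be the only nonroutine step; both implications are otherwise formal, which is why the lemma is qualified as easy.
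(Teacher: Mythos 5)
Your proof is correct. The paper states this lemma without proof (the terminal $\square$ marks it as immediate), so there is no argument of the authors to compare against; what you supply is the standard one they presumably had in mind, and both halves are sound: the forward direction via coefficient balancing over connected components of the monomial graph determined by a representation $p=\sum_s h_s b_s$ (each elementary term $c\,x^\mu(x^{\gamma_s}-x^{\delta_s})$ contributes net zero to its component, so the component of $x^\alpha$ must contain a second surviving monomial, connected to $x^\alpha$ by a telescoping chain of binomials in $J$), and the converse via induction on the size of the support, subtracting $c_\alpha(x^\alpha-x^\beta)$, which strictly shrinks the support since $x^\beta$ already occurs in $p$.
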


Recall that a walk in $G$ is an ordered sequence $w =
(v_0^we_1^wv_1^w\dots e_{k_w}^wv_{k_w}^w)$ where the $v_i$ are vertices
in $G$ and the $e_i=\{v_{i-1},v_i\}$ are edges in $G$; repetitions of
vertices and/or edges are allowed. Given two vertices $i$ and $j$,
let $W_{i,j}$ be the collection of all walks form $i$ to $j$.

We write $y_{\lambda(i)}$ for the image of $1-k_{ii}$ in the ring
in~\eqref{eq:RingK} and, for all edges $\{i,j\}$ of $G$, we write
$y_{\lambda\{i,j\}}$ for the image of $-k_{ij}$ in that ring. So that
ring may be identified with the polynomial ring in the $y$-variables.

\begin{proposition} \label{prop:Sigmaij}
    Let $G$ be a colored graph. Then we have 
\begin{equation}\label{eq:sigma-i-j}
    \rho_G(\sigma_{i,j}) = \sum_{w\in
    W_{i,j}}\prod_{\ell=0}^{k_w}\frac{1}{1-y_{\lambda(v_\ell^w)}}\prod_{\ell=1}^{k_w}
    y_{\lambda(e_\ell^w)}
\end{equation}
as a formal power series. 
\end{proposition}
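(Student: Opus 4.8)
The plan is to recognise $\rho_G(\sigma_{i,j})$ as the $(i,j)$-entry of $K^{-1}$, expand that entry as a Neumann series over the ring of formal power series in the color variables, and read off the walk sum from the entries of the matrix powers. First I would note that by Cramer's rule $(K^{-1})_{ij} = (-1)^{i+j} M_{ji}/\det(K)$, where $M_{ji}$ is the minor obtained by deleting row $j$ and column $i$; since $K$ is symmetric this minor equals the $(i,j)$-minor $K_{ij}$ appearing in the definition of $\rho_G$, so $\rho_G(\sigma_{i,j})$ is exactly $(K^{-1})_{ij}$. Under the identification of the coordinate ring of $\cL_G$ with the polynomial ring in the $y$-variables, the matrix $K$ becomes $K = \mathrm{Id} - D - B$, where $D$ is the diagonal matrix with $D_{ii} = y_{\lambda(i)}$ and $B$ is the symmetric off-diagonal matrix with $B_{ij} = y_{\lambda(\{i,j\})}$ for each edge $\{i,j\}$ and $B_{ij}=0$ otherwise. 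All entries of $D$ and $B$ lie in the maximal ideal $\mathfrak{m} = (y_1,\dots,y_c)$ of $\R[[y_1,\dots,y_c]]$.

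The key algebraic step is the factorisation $K = (\mathrm{Id}-D)\bigl(\mathrm{Id} - (\mathrm{Id}-D)^{-1}B\bigr)$, in which $(\mathrm{Id}-D)^{-1}$ is the diagonal matrix with entries $1/(1-y_{\lambda(i)})$, well-defined in $\R[[y]]$ because each $y_{\lambda(i)} \in \mathfrak{m}$. Since the entries of $(\mathrm{Id}-D)^{-1}B$ then all lie in $\mathfrak{m}$ as well, the factor $\mathrm{Id} - (\mathrm{Id}-D)^{-1}B$ is invertible by the geometric series, and I would obtain
\[
K^{-1} = \sum_{m\ge 0}\bigl((\mathrm{Id}-D)^{-1}B\bigr)^m (\mathrm{Id}-D)^{-1},
\]
the sum converging $\mathfrak{m}$-adically in the matrix ring over $\R[[y]]$.

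Finally I would expand each summand entrywise. Writing out the matrix product, the $(i,j)$-entry of $\bigl((\mathrm{Id}-D)^{-1}B\bigr)^m(\mathrm{Id}-D)^{-1}$ is a sum over sequences $i = u_0, u_1, \ldots, u_m = j$ with each $\{u_{\ell-1},u_\ell\}$ an edge, i.e.\ over the walks $w \in W_{i,j}$ of length $k_w = m$; collecting the factor $1/(1-y_{\lambda(u_{\ell-1})})$ contributed by $(\mathrm{Id}-D)^{-1}$ at each step together with the trailing factor $1/(1-y_{\lambda(u_m)})$ produces exactly $\prod_{\ell=0}^{k_w} 1/(1-y_{\lambda(v_\ell^w)})$, while $B$ contributes $\prod_{\ell=1}^{k_w} y_{\lambda(e_\ell^w)}$. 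Summing over $m$ then gives the claimed formula.

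The main point requiring care is the passage from the rational function $\rho_G(\sigma_{i,j}) \in \R(K)$ to a formal power series and the legitimacy of the regrouping into the walk sum. For this I would observe that $\det(K)$ has constant term $1$, so $\rho_G(\sigma_{i,j})$ lies in the localisation of the polynomial ring at $\mathfrak{m}$ and has a unique image in $\R[[y]]$; and that, because every edge factor $y_{\lambda(e_\ell^w)}$ has degree one, a walk of length $k_w$ contributes only monomials of degree $\ge k_w$. Since $G$ is finite there are finitely many walks of each length, so only finitely many walks contribute to each total degree; this makes the infinite sum a well-defined element of $\R[[y]]$ and the rearrangement degree-by-degree legitimate.
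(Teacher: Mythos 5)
Your proof is correct, and it reaches the walk sum by a somewhat different route than the paper. The paper writes $K = I - \Psi$ with $\Psi$ containing \emph{both} the diagonal entries $y_{\lambda(i)}$ and the off-diagonal edge entries, expands $(I-\Psi)^{-1} = \sum_k \Psi^k$, and then must regroup: the index sequences contributing to $(\Psi^k)_{i,j}$ are ``lazy walks'' that may stay at a vertex for several steps, and these are collected according to their underlying walk, with the stays at each vertex resummed into the geometric series $1/(1-y_{\lambda(v)})$. Your factorization $K = (\mathrm{Id}-D)\bigl(\mathrm{Id}-(\mathrm{Id}-D)^{-1}B\bigr)$ performs this resummation once and for all at the matrix level: each term $\bigl((\mathrm{Id}-D)^{-1}B\bigr)^m(\mathrm{Id}-D)^{-1}$ is indexed exactly by the walks of length $m$, with the vertex factors already attached, so no regrouping of index sequences is needed. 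In substance both arguments are the same geometric-series expansion of $K^{-1}$; what yours buys is cleaner bookkeeping and, in addition, explicit care on two points the paper leaves implicit: the identification $\rho_G(\sigma_{i,j}) = (K^{-1})_{ij}$ via Cramer's rule together with the symmetry of $K$, and the $\mathfrak{m}$-adic convergence argument that makes the degree-by-degree rearrangement into a walk sum legitimate.
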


By slight abuse of notation, we will in the following often write
$\sigma_{ij}$ instead of $\rho_G(\sigma_{ij})$.

\begin{proof}
    Let $K$ be some invertible matrix in $\mathcal L_G$. Write $K = I
    - \Psi$.  Then $\rho$ maps $\sigma_{ij}$ to the $(i,j)$-entry of 
    $$
        (I-\Psi)^{-1} = \sum_{k=0}^\infty \Psi^k,
    $$
    where the right-hand side is understood as a formal power series
    in the entries of $\Psi$.
    
    Fix $i,j\in [n]$. Our goal is to determine $\left((I-\Psi)^{-1}\right)_{i,j}$. Let $\mathcal I_{i,j,k}$ be the set of sequences $(i_0, i_1, \dots ,i_k)$ such that $i_0 = i$ and $i_k = j$. Then
    $$
        (\Psi^k)_{i,j} = \sum_{(i_0,\dots, i_k)\in \mathcal I_{i,j,k}}
	\prod_{\ell = 1}^{k} \Psi_{i_{\ell-1}, i_\ell} =
	\sum_{(i_0,\dots, i_k)\in \mathcal I_{i,j,k}} \prod_{\ell:
	\; i_{\ell-1} = i_\ell} y_{\lambda(i_\ell)} \cdot
	\prod_{\ell: \;  i_{\ell-1} \neq i_\ell}
	y_{\lambda(\{i_{\ell-1},i_\ell\})}
    $$
    If some pair $(i_{\ell-1},i_\ell)$ with $i_{\ell-1} \neq i_\ell$
    is not an edge in $G$, then the product is
    zero. Hence, each $(i_0,\dots, i_k)\in \mathcal I_{i,j,k}$ such
    that the product is non-zero has an underlying walk from the
    vertex $i$ to the vertex $j$ in $G$. Fix a walk $w =
    (v_0^we_1^wv_1^w\dots e_{k_w}^wv_{k_w}^w)$ such that $v_0 = i$,
    $v_{k_w} = j$. If we sum over all sequences $(i_0,\dots, i_k)$
    whose underlying path is $w$, we obtain
    $$
        \prod_{\ell=0}^{k_w}\left(\sum_{h=0}^\infty
	y_{\lambda(v_{\ell}^w)}^h\right)\cdot\prod_{\ell = 1}^{k_w}
	y_{\lambda(e_\ell^w)} = 
        \prod_{\ell=0}^{k_w} \frac{1}{1-y_{\lambda(v_\ell^w)}} 
\cdot\prod_{\ell = 1}^{k_w}
	y_{\lambda(e_\ell^w)} 
    $$
    To get formula \ref{eq:sigma-i-j}, we now sum over all walks from $i$ to $j$.
\end{proof}

In next few propositions, we make use of Theorem 2.5 from Talaska \cite{talaska2012}. We first recall the necessary definitions and describe the setting using the same notation.

Consider a {\em directed} graph $\aG$. Assign to each arrow $e\in
E(\aG)$ a
variable $x_e$ and assume that all those variables commute. We define
a \emph{weight} of a walk $w = (v_0^we_1^wv_1^w\dots
e_{k_w}^wv_{k_w}^w)$ as $\wt(w):= \prod_{i=1}^{k_w} x_{e_i^w}$. The
\emph{weighted path matrix} of $\aG$ is the matrix $M$ whose entry $m_{i,j}$ is $\sum_{w\in W_{i,j}} \wt(w)$.

We call a walk $w = (v_0^we_1^wv_1^w\dots e_{k_w}^wv_{k_w}^w)$ a
\emph{path} if $v_i^w\neq v_j^w$ for all $i\neq j$ and we call it a
\emph{cycle} if $v_0^w = v_{k_w}^w$ and $v_i^w\neq v_j^w$ for $i\neq
j$ otherwise. We identify a cycle $(v_0e_1\dots e_kv_0)$ with its
cyclic rotations such as $(v_1e_2\dots e_kv_0e_1 v_1)$.

Let $\mathcal C(\aG)$ denote the set of collections $\mathbf{C}$
of pairwise vertex-disjoint cycles. We require that each cycle in a
collection $\mathbf{C}$ contains at least one edge. We consider the
empty collection to be an element of $\mathcal{C}(\aG)$ with weight $1$.

Let $A=\{a_1,\dots ,a_k\}$ and $B= \{b_1,\dots ,b_k\}$ be two subsets of
$V(\aG)$, both of cardinality $k$, and let $\pi$ be an element
of the symmetric group $S_k$. Then we define $\mathcal
P_{A,B,\pi}(\aG)$ to be the set of collections $\mathbf{P} =
(P_1,\dots ,P_k)$ of pairwise vertex-disjoint paths where $P_i$ is a
path from $a_i$ to $b_{\pi(i)}$. Let $\mathcal{P}_{A,B}(\aG) =
\bigsqcup_{\pi \in S_k} \mathcal{P}_{A,B,\pi}(\aG)$. 

Let $\mathcal{F}_{A,B}(\aG)$ be the set of \emph{self-avoiding
flows} connecting $A$ to $B$, i.e. pairs $\mathbf{F} =
(\mathbf{P},\mathbf C)$ such that $\mathbf{P}\in
\mathcal{P}_{A,B}(\aG)$, $\mathbf{C}\in \mathcal C(\aG)$ and $\mathbf{P}$ and $\mathbf{C}$ are vertex-disjoint.

We define the sign of $\mathbf{P}\in \mathcal{P}_{A,B,\pi}(\aG)$
as $\sgn(\mathbf P) = \sgn (\pi)$ and the sign of $\mathbf{C}\in
\mathcal C(\aG)$ as $\sgn(\mathbf C) = -1^{|\mathbf C|}$, where
$|\mathbf C|$ is the number of cycles in $\mathbf C$. Finally, for
$\mathbf F\in \mathcal{F}_{A,B}(\aG)$ there is a unique decomposition of $\mathbf{F}$ as a collection of paths $\mathbf P$ and a collection of cycles $\mathbf C$ and we define $\sgn(\mathbf F) = \sgn(\mathbf{P})\sgn(\mathbf C)$.

\begin{theorem}{\cite[Theorem~2.5]{talaska2012}} \label{thm:Talaska-2012}
    Let $\aG$ be a directed graph with weighted path matrix $M$. Then the minor $\Delta_{A,B}(M)$, with rows indexed by $A$ and columns indexed by $B$, is given by
    $$
        \Delta_{A,B}(M) = \frac{\sum_{\mathbf{F}\in \F_{A,B}}\sgn
	(\mathbf{F})\wt (\mathbf{F})}{\sum_{\mathbf{C}\in
	\mathcal{C}(\aG)}\sgn (\mathbf{C})\wt(\mathbf{C})}.
    $$
\end{theorem}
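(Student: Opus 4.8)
The starting point is to express the weighted path matrix algebraically. Let $X$ be the weighted adjacency matrix of $\aG$, with $(i,j)$-entry $\sum_{e\colon i\to j} x_e$, the sum running over all arrows from $i$ to $j$. A walk of length $\ell$ from $i$ to $j$ contributes to $(X^\ell)_{i,j}$, and the empty walk at each vertex accounts for the identity, so that $M=\sum_{\ell\ge 0}X^\ell=(I-X)^{-1}$ as a formal power series. I would first dispatch the denominator: expanding $\det(I-X)=\sum_{\pi}\sgn(\pi)\prod_{v}(I-X)_{v,\pi(v)}$ over permutations $\pi$ of $V(\aG)$ and decomposing each $\pi$ into cycles, the fixed points contribute the diagonal entry $1$ (an uncovered vertex), while every nontrivial permutation cycle of length $\ell$ carries sign $(-1)^{\ell-1}$ and a factor $(-1)^\ell$ from the off-diagonal entries $-x_{v,\pi(v)}$, hence contributes an overall $-1$. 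Collecting the cycles with at least one edge then yields $\det(I-X)=\sum_{\mathbf C\in\mathcal{C}(\aG)}\sgn(\mathbf C)\wt(\mathbf C)$, which is exactly the denominator in the statement.

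Next I would reduce the minor of $M$ to a complementary minor of $I-X$ via Jacobi's identity for minors of the inverse: for $|A|=|B|=k$,
\[
  \Delta_{A,B}(M)=(-1)^{\sigma_A+\sigma_B}\,\frac{\det\bigl((I-X)_{\overline B,\overline A}\bigr)}{\det(I-X)},
\]
where $\sigma_A=\sum_{a\in A}a$, $\sigma_B=\sum_{b\in B}b$, and $\overline A,\overline B$ denote complements in $V(\aG)$. The denominator already matches the previous paragraph, so it remains to identify the numerator $(-1)^{\sigma_A+\sigma_B}\det\bigl((I-X)_{\overline B,\overline A}\bigr)$ with $\sum_{\mathbf F\in\mathcal{F}_{A,B}(\aG)}\sgn(\mathbf F)\wt(\mathbf F)$.

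For this I would expand the $(\lvert V(\aG)\rvert-k)\times(\lvert V(\aG)\rvert-k)$ determinant $\det\bigl((I-X)_{\overline B,\overline A}\bigr)$ as a signed sum over bijections $\phi\colon\overline B\to\overline A$, and in each resulting term split every factor $(I-X)_{j,\phi(j)}=\delta_{j,\phi(j)}-x_{j,\phi(j)}$ into a diagonal choice (an uncovered vertex) or an edge choice. Reading the chosen edges as arrows $j\to\phi(j)$ of $\aG$, injectivity of $\phi$ forces both in- and out-degree at most one at every vertex, so the selected edges form a collection of vertex-disjoint paths and cycles. Since the elements of $A$ are precisely those missing from the image of $\phi$ and the elements of $B$ those missing from its domain, the path sources lie in $A$ and the path sinks in $B$, while the cycles are disjoint from all of these. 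Thus each term corresponds to a self-avoiding flow in $\mathcal{F}_{A,B}(\aG)$, and this correspondence is a bijection, its inverse recovering $\phi$ together with the diagonal choices on the uncovered vertices.

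The remaining, and genuinely delicate, step is the sign bookkeeping, which I expect to be the main obstacle. One must show that for the flow $\mathbf F=(\mathbf P,\mathbf C)$ produced from $\phi$, the product of the order-induced bijection sign $\epsilon(\phi)$, the factor $(-1)^{\#\{\text{edges used}\}}$ coming from the $-x$ terms, and the Jacobi sign $(-1)^{\sigma_A+\sigma_B}$ equals $\sgn(\mathbf P)\sgn(\mathbf C)=\sgn(\pi)\,(-1)^{\lvert\mathbf C\rvert}$, where $\pi\in S_k$ is the permutation matching each source $a_i$ to the sink $b_{\pi(i)}$ reached along its path. This is precisely the Lindstr\"om--Gessel--Viennot sign computation adapted to the present setting: signwise, each directed path of edge-length $\ell$ behaves like an $\ell$-cycle threaded between its endpoints, and the positional corrections $\sigma_A,\sigma_B$ reconcile $\epsilon(\phi)$ with the matching permutation $\pi$, while the cycle contributions reproduce $(-1)^{\lvert\mathbf C\rvert}$ as in the denominator computation. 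The combinatorial decomposition itself is routine once $M=(I-X)^{-1}$ is in hand; it is keeping track of how paths versus cycles each contribute to $\epsilon(\phi)$, and how the complementary-minor conventions interact with $\sigma_A$ and $\sigma_B$, that requires care. An alternative route that avoids Jacobi would be to clear denominators and construct a sign-reversing involution cancelling every walk-system determinant term that is not a self-avoiding flow, but the sign bookkeeping there is of comparable difficulty.
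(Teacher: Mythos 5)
The first thing to note is that the paper does not prove this statement at all: it is quoted verbatim as Talaska's Theorem~2.5 and used as a black box, so there is no internal proof to compare yours against. That said, your proposed route --- writing the walk matrix as $M=(I-X)^{-1}$, computing $\det(I-X)=\sum_{\mathbf C}\sgn(\mathbf C)\wt(\mathbf C)$ by decomposing permutations into cycles, and converting $\Delta_{A,B}(M)$ into the complementary minor $\det\bigl((I-X)_{\overline B,\overline A}\bigr)$ via Jacobi's identity --- is a legitimate strategy, and it is genuinely different from Talaska's own argument, which clears the denominator and runs a sign-reversing involution on walk systems in the spirit of Fomin's loop-erased-walk machinery. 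Your identification of the surviving determinant terms with self-avoiding flows is also essentially right, modulo two small omissions: first, vertices of $A\cap B$ index neither a row nor a column of $(I-X)_{\overline B,\overline A}$ and must be matched with the length-zero paths allowed in $\mathcal{P}_{A,B}(\aG)$, together with the observation that no chosen edge can touch such a vertex; second, if $\aG$ has loops then the diagonal entries are $1-x_{vv}$ rather than $1$, and the loop choices must be absorbed as $1$-cycles (the general statement allows this; your phrasing silently assumes loop-freeness, which suffices for the application in this paper).

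The genuine gap is the sign computation, which you state but do not prove. Everything before it is routine; the entire content of the theorem is concentrated in the identity
\[
(-1)^{\sigma_A+\sigma_B}\,\epsilon(\phi)\,(-1)^{\#\{\text{chosen edges}\}}
=\sgn(\pi)\,(-1)^{|\mathbf C|},
\]
where $\sigma_A=\sum_{a\in A}a$, $\sigma_B=\sum_{b\in B}b$, $\epsilon(\phi)$ is the sign of the bijection $\phi\colon\overline B\to\overline A$ with respect to the increasing orderings, and $\pi\in S_k$ is the connection permutation of the flow. Saying that each path ``behaves like a cycle threaded between its endpoints'' and that the positional corrections ``reconcile $\epsilon(\phi)$ with $\pi$'' is a plausible heuristic, not an argument: the verification requires, for instance, closing each path of $\mathbf P$ into a cycle by a virtual arrow from its sink back to its source, comparing the sign of the resulting permutation of $V(\aG)$ with $\epsilon(\phi)$, and tracking how the positions of the elements of $A$ and $B$ inside $V(\aG)$ enter the count --- exactly the place where proofs of this type (Chaiken's all-minors matrix-tree theorem is the classic example) are notorious for going wrong by a sign. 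Until that lemma is proved, by induction on the number of paths or by an explicit transposition count, what you have is a correct proof skeleton with its crux missing.
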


Theorem~\ref{thm:Talaska-2012} is for directed graphs $\aG$,
while our graph $G$ in Theorem~\ref{thm:CharToric} is undirected. Also,
in Theorem~\ref{thm:Talaska-2012} we only have variables corresponding
to edges, while we have variables corresponding to both edges (more
precisely, edge colors) and vertices (more precisely, vertex colors).
To nevertheless be able to apply Talaska's theorem, we construct a
directed graph $\aG$ from $G$ by replacing each edge $\{u,v\}$ of $G$ 
with two directed edges $(u,v)$ and $(v,u)$, and we specialize Talaska's variables
$x_{(u,v)}$ and $x_{(v,u)}$ to $z_{\lambda(u)} y_{\lambda(\{u,v\})}$ and
$z_{\lambda(v)} y_{\lambda(\{u,v\})}$, respectively, where $z_{\lambda(i)}$
stands for $\frac{1}{1-y_{\lambda(i)}}$. Then the expression for
$\rho_G(\sigma_{ij})$ in Proposition~\ref{prop:Sigmaij} equals
$z_{\lambda(j)}$ times the $(i,j)$ entry of the weighted path matrix $M$ for 
$\aG$. Consequently, we have the fundamental identity 
\[ \rho_G(\det\Sigma_{A,B}) = \Delta_{A,B}(M)\cdot \prod_{v\in B}
z_{\lambda(v)}. \]
Note that in our setting, every non-trivial walk from some vertex to itself has at least two edges.

\begin{proposition} \label{prop:Block}
    If $G$ is not a block graph, then $I_G$ is not binomial.
\end{proposition}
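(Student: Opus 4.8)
The plan is to argue by contraposition: assume $G$ is connected but not a block graph, and produce a polynomial $p\in I_G$ together with a monomial in $p$ that has no ``binomial partner'', thereby violating the Lemma. By Proposition~\ref{prop:char-of-block-graph} there exist vertices satisfying condition (1) or (2), and I organise the two cases around the following dichotomy. If some pair of vertices is joined by two distinct shortest paths, then taking their first point of divergence $a$ and first subsequent point of reconvergence $b$ yields two internally disjoint shortest paths between $a$ and $b$, i.e.\ an instance of condition (1) with $d(a,b)$ no larger than the distance of the original pair. Hence either condition (1) holds for some pair, in which case I fix such a pair $u,v$ with $d:=d(u,v)$ minimal, or else shortest paths are unique throughout $G$ and condition (2) holds for some pair $u,v$.

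In both cases $d\ge 2$, so $v\notin N(u)$ and the neighbourhood $S:=N(u)$ separates $u$ from $v$. The model therefore satisfies the conditional independence $u\perp v\mid S$, which means $p:=\det\Sigma_{\{u\}\cup S,\{v\}\cup S}$ lies in $I_G$. Its monomials are indexed by bijections $\pi\colon\{u\}\cup S\to\{v\}\cup S$, and by Proposition~\ref{prop:Sigmaij} the lowest-degree part of $\rho_G(\sigma_{a,b})$ in the edge variables is supported on the shortest paths from $a$ to $b$, so the lowest edge-degree of $\rho_G\bigl(\prod_a\sigma_{a,\pi(a)}\bigr)$ is $\sum_a d(a,\pi(a))$. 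Following the cycle of $\pi$ that contains $u$ (it must terminate at $v$) and applying the triangle inequality gives $\sum_a d(a,\pi(a))\ge d$, with equality exactly for the identity and for the transpositions $u\mapsto s,\ s\mapsto v$ with $s\in N(u)$ and $d(s,v)=d-1$; these $s$ are precisely the first vertices of shortest paths from $u$ to $v$. I take the identity monomial $\sigma^\alpha:=\sigma_{u,v}\prod_{s\in S}\sigma_{s,s}$ as the distinguished monomial.

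In case (1) there are at least two shortest paths $P,Q$ from $u$ to $v$, so the lowest-degree part of $\rho_G(\sigma^\alpha)$ is a sum of at least two distinct monomials, one per shortest path. By minimality of $d$ and the divergence argument, for each qualifying neighbour $s$ the shortest path between $s$ and $v$ is unique, so every competing minimal-degree monomial (a transposition) contributes a single monomial at lowest degree, while all other monomials have strictly larger lowest degree. Hence $\rho_G(\sigma^\alpha)$ equals the image of no other monomial of $p$, so $\sigma^\alpha$ has no binomial partner and the Lemma shows $I_G$ is not binomial.

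Case (2) is the main obstacle. Now the shortest path $P$ from $u$ to $v$ is unique, so its first vertex $p_1$ is the only qualifying neighbour; thus $\sigma^\alpha$ and the single transposition monomial $\sigma^\beta:=\sigma_{u,p_1}\sigma_{p_1,v}\prod_{s\in S\setminus\{p_1\}}\sigma_{s,s}$ are the only minimal-degree monomials, and their lowest-degree parts coincide, so the tie must be broken at higher order. Cancelling the common unit factor $\prod_{s\ne p_1}\rho_G(\sigma_{s,s})$, it suffices to show $\sigma_{u,v}\sigma_{p_1,p_1}-\sigma_{u,p_1}\sigma_{p_1,v}\notin I_G$; via the fundamental identity this equals $z_{\lambda(v)}z_{\lambda(p_1)}\,\Delta_{\{u,p_1\},\{v,p_1\}}(M)$, to which I apply Theorem~\ref{thm:Talaska-2012}. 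The flows realising the transposition $u\mapsto p_1,\ p_1\mapsto v$ cannot exist, since both constituent paths would have to pass through $p_1$; hence the numerator is the sum over identity-type flows, whose lowest-degree contribution is cycle-free, consisting of paths from $u$ to $v$ avoiding $p_1$ paired with the trivial path at $p_1$, all entering with sign $+1$. Since $P$ is the unique shortest such path and uses $p_1$, every path from $u$ to $v$ avoiding $p_1$ has length at least $d+1$, and the detour $Q$ attains $d+1$, so the lowest-degree part of the numerator is a nonempty positive sum and cannot vanish. This is precisely where the colouring could have caused trouble, as distinct walks may carry identical monomials, but the uniform sign $+1$ rules out cancellation. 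Therefore $\rho_G(\sigma^\alpha)\ne\rho_G(\sigma^\beta)$, $\sigma^\alpha$ again has no binomial partner, and the Lemma finishes the proof.
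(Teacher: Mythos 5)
Your proposal is correct, and it shadows the paper's own proof in its overall architecture: the same separation determinant $p=\det\Sigma_{\{u\}\cup N(u),\,\{v\}\cup N(u)}$, the same distinguished monomial $\sigma^\alpha=\sigma_{u,v}\prod_{s\in N(u)}\sigma_{s,s}$, the same identification of the only possible binomial partners (the transposition monomials through neighbours $s$ with $d(s,v)=d(u,v)-1$), and the same two-case split via Proposition~\ref{prop:char-of-block-graph}; your Case~1 is essentially the paper's Case~1. The differences: for membership $p\in I_G$ you invoke the global Markov property of Gaussian graphical models, where the paper derives it internally from Theorem~\ref{thm:Talaska-2012} by noting $\mathcal{P}_{A,B}=\emptyset$ (equivalent); your triangle-inequality argument along the chain of $\pi$ from $u$ to $v$ is a cleaner way to isolate the candidate partners than the paper's factor-by-factor discussion; and, most substantially, Case~2 is resolved by a genuinely different argument. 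There the paper takes $d(u,v)$ minimal and compares the edge-degree-$(d(u,v)+1)$ parts of $\rho_G(\sigma^\alpha)$ and $\rho_G(\sigma^\beta)$ by a walk-counting argument; you instead cancel the unit $\prod_{s\neq p_1}\rho_G(\sigma_{s,s})$ and apply Talaska's theorem directly to the $2\times 2$ minor $\Delta_{\{u,p_1\},\{v,p_1\}}(M)$: transposition flows cannot exist because both constituent paths would use $p_1$, so the numerator is a sum over identity flows only, all of sign $+1$, and it is nonzero because condition~(2) supplies a $u$--$v$ path avoiding $p_1$. This buys three things: no minimality of $d(u,v)$ is needed in Case~2, the delicate bookkeeping of which length-$(d(u,v)+1)$ walks occur on each side disappears, and possible coincidences of monomials caused by the colouring are dispatched once and for all by the uniform sign.

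One small inaccuracy, harmless here: in Case~1 you say the lowest-degree part of $\rho_G(\sigma^\alpha)$ is a sum of at least two \emph{distinct} monomials, one per shortest path. Two distinct shortest paths may carry identical colour monomials; what is true is that this part is a sum of at least two terms, each entering with coefficient $+1$, so it cannot equal the single coefficient-$1$ monomial contributed by a transposition partner --- which is all your argument (and the paper's) needs.
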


In the proof below, we write $N(u)$ for the neighbors in $G$ of a
vertex $u$.

\begin{proof}
    If graph is not a block graph, there must be two vertices $u$ and $v$ satisfying one of the conditions of Proposition \ref{prop:char-of-block-graph}.

    Case 1: There are $u,v$ satisfying condition (1). See
    Figure~\ref{fig:case1} for an illustration. Take $u,v$ with the
    smallest possible $d(u,v)$. Let $A = N(u)\cup \lbrace u\rbrace$,
    $B=N(u) \cup \lbrace v\rbrace$ and consider
    $\rho_G(\det\Sigma_{A,B})$.
    This polynomial vanishes, since the set $P_{A,B}$ is empty: any
    path from $u$ to a vertex in $B$ intersects a path starting at a
    neighbor of $u$. Hence in Theorem~\ref{thm:Talaska-2012} we sum over an empty set, obtaining zero.

    Consider the monomial $m=\sigma_{u,v}\cdot \prod_{i \in N(u)}
    \sigma_{i,i}$ in $\det(\Sigma_{A,B})$ and consider the edge-variable degrees of the monomials in
    $\rho_G(m)$, using the formula~\eqref{eq:sigma-i-j}. The smallest
    non-zero edge-degree is $d(u,v)$, where we take trivial paths from
    every factor, except for $\sigma_{u,v}$. Hence the part of
    edge-degree $d(u,v)$ in $\rho_G(m)$ equals 
    $$
        \sum_{P\text{ shortest path between $u$ and $v$}}\left( \prod_{e\in E(P)} y_{\lambda(e)}\cdot \prod_{v\in V(P)} z_{\lambda(v)}\right).
    $$
    If some other monomial $m'$ in $\det(\Sigma_{A,B})$ has the property
    that $m'-m \in I_G=\ker(\rho_G)$, then $\rho_G(m')$ in particular
    has a nonzero part of edge-degree $d(u,v)$. We claim that $m'$
    must then be of the form $m_{i}:=\sigma_{u,i}\sigma_{v,i}\cdot
    \prod_{j\in N(u),\, j\neq i}\sigma_{j,j}$, where $i\in N(u)$
    lies on some shortest path from $u$ to $v$. Indeed, $m'$ must
    contain a factor $\sigma_{u,w}$ with $w\neq u$, which contributes
    at least one edge variable to $\rho(m')$. If $w=v$, all the other
    factors need to be $\sigma_{j,j}$ for $j\in N(u)$, i.e., $m'=m$
    again. Otherwise, $m'$ has a factor $\sigma_{v,w'}$ for some $w'
    \in N(u)$, contributing at least $d(u,v)-1$ edge variables. Hence,
    all the other factors must be $\sigma_{j,j}$. But if $w\neq w'$,
    there still needs to be factor $\sigma_{w,w''}$ for some $w''$. So
    really $i:=w=w'$ and $d(i,v) = d(u,v)-1$.

    By minimality of $d(u,v)$, the shortest path from $i$ to $v$
    is unique. Consequently, $\rho(m_{i})$
    has only one term of edge-degree $d(u,v)$, while $\rho(m)$ has at
    least two. Hence, there is no monomial $m'$ in $\det(\Sigma_{A,B})$
    such that $m-m' \in I_G$. Hence $I_G$ is not binomial.

    \begin{figure}[h]
        \centering
        \includegraphics[width=0.5\linewidth]{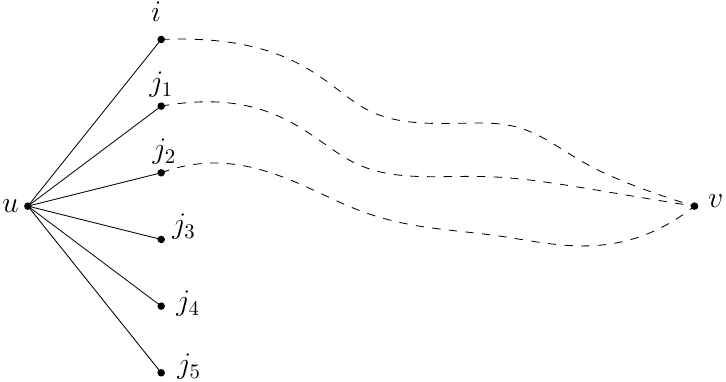}
        \caption{The dashed paths are the shortest paths between $u$ and $v$.}
        \label{fig:case1}
    \end{figure}

    Case 2: There are no two vertices satisfying condition (1), but
    there are two vertices satisfying condition (2). Take such
    vertices $u$ and $v$ with the smallest $d(u,v)$. See
    Figure~\ref{fig:case2}

    \begin{figure}[h]
        \centering
        \includegraphics[width=0.5\linewidth]{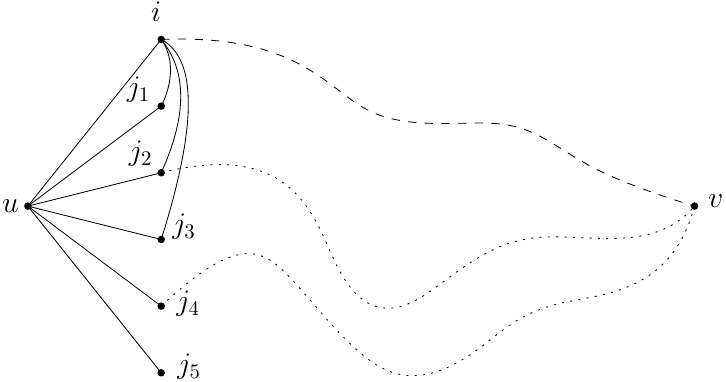}
        \caption{The dashed path is the unique shortest path, dotted paths are the possible paths of length $d(u,v)+1$.}
        \label{fig:case2}
    \end{figure}
    
    Consider the same determinant and the monomial $m$ as in
    the first case. Let $i \in N(u)$ be the vertex on the unique
    shortest path from $u$ to $v$. By the same argument as above,
    the only candidate monomial $m'$ with $m-m' \in \ker(\rho_G)$
    is $m_{i}=\sigma_{u,i}\sigma_{i,v}\cdot \prod_{j\in N(u),
    j\neq i}\sigma_{j,j}$. This time, the components in $\rho(m)$ and
    $\rho(m_{i})$ of lowest edge-degree are the same. Consider the second
    lowest one, which is $d(u,v)+1$. The factors $\sigma_{j,j}$ still need
    to contribute trivially, since otherwise they contribute by at least
    two edge variables.  The factor $\sigma_{i,v}$ can contribute only
    once with $d(u,v)-1$ edges as before, by the minimality of $d(u,v)$
    and the existence of the unique shortest path. So the $+1$ must come
    from $\sigma_{u,i}$. There are as many monomials with edge degree
    two in $\rho_G(\sigma_{u,i})$ as there are edges $(i,j)$ with $j \in
    N(u) \setminus \{i\}$. But each monomial of edge degree $d(u,v)+1$
    obtained like this appears also in $\rho_G(m)$. Moreover,
    there is in $\rho_G(m)$ one monomial more, namely the one
    corresponding to the path of length $d(u,v)+1$, which is vertex
    disjoint from the shortest path of length $d(u,v)$ except at
    endpoints. So $\rho_G(m)$ and $\rho_G(m_{i})$ are different
    and $I_G$ is not binomial. 
\end{proof}

\begin{proposition} \label{prop:Vertex}
    If the block graph $G$ is not vertex-regular, then $I_G$ is not binomial.
\end{proposition}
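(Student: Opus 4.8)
The plan is to argue within the block-graph case and to apply the elementary Lemma characterising binomial ideals: to show $I_G$ is not binomial it suffices to exhibit a single $p\in I_G$ possessing a monomial $m$ with nonzero coefficient such that $\rho_G(m)\neq\rho_G(m')$ for every other monomial $m'$ occurring in $p$; then (since $\sigma^\alpha-\sigma^\beta\in I_G$ iff $\rho_G(\sigma^\alpha)=\rho_G(\sigma^\beta)$) the monomial $m$ has no binomial partner in $p$. Since $G$ is not vertex-regular, I first fix two vertices $u\neq v$ with $\lambda(u)=\lambda(v)$ and an edge color $a$ with $k_a(u)\neq k_a(v)$, where $k_a(w)$ denotes the number of color-$a$ edges at $w$; without loss of generality $k_a(u)>k_a(v)$. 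The quantitative input comes from Proposition~\ref{prop:Sigmaij}: the edge-degree-$0$ part of $\rho_G(\sigma_{ww})$ is $z_{\lambda(w)}:=\tfrac{1}{1-y_{\lambda(w)}}$, and its edge-degree-$2$ part is $z_{\lambda(w)}^2\sum_{x\in N(w)}z_{\lambda(x)}\,y_{\lambda(\{w,x\})}^2$. Thus $\rho_G(\sigma_{uu})$ and $\rho_G(\sigma_{vv})$ have equal constant terms (as $\lambda(u)=\lambda(v)$) but, after the specialization sending every vertex variable and every edge variable except $y_a$ to $0$, the coefficient of $y_a^2$ becomes $k_a(u)$, respectively $k_a(v)$; hence the two images already differ at edge-degree $2$.

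Next I would produce $p$. The constraint $\lambda(u)=\lambda(v)$ means the single parameter $z_{\lambda(u)}=z_{\lambda(v)}$ is shared between the two vertices, and the strategy is to eliminate it. Using Theorem~\ref{thm:Talaska-2012} and the fundamental identity $\rho_G(\det\Sigma_{A,B})=\Delta_{A,B}(M)\cdot\prod_{x\in B}z_{\lambda(x)}$, one expresses $z_{\lambda(u)}$ as a rational function of the local coordinates $\sigma_{uu}$ and $\sigma_{ux}$ ($x\in N(u)$), and likewise $z_{\lambda(v)}$ in terms of $\sigma_{vv}$ and $\sigma_{vy}$ ($y\in N(v)$); equating the two expressions and clearing denominators yields a homogeneous relation $p\in I_G$. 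When the edges at $u$ and $v$ are bridges (in particular if $G$ is a tree) the relevant local identity is $\rho_G(\sigma_{ww})^2-z_{\lambda(w)}\rho_G(\sigma_{ww})=\sum_{x\in N(w)}\rho_G(\sigma_{wx})^2$, and the elimination produces the explicit cubic
\[
p=\sigma_{uu}^2\sigma_{vv}-\sigma_{uu}\sigma_{vv}^2-\sigma_{vv}\sum_{x\in N(u)}\sigma_{ux}^2+\sigma_{uu}\sum_{y\in N(v)}\sigma_{vy}^2 .
\]
I would take the distinguished monomial to be $m=\sigma_{uu}^2\sigma_{vv}$.

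To finish, observe that $\rho_G(m)$ has a nonzero edge-degree-$0$ part, namely $z_{\lambda(u)}^3$. A degree-$3$ monomial in the $\sigma_{ij}$ has a nonzero constant term only if all three of its factors are diagonal, so the only monomials of $p$ that could share this constant term are products of three diagonal variables whose underlying vertices all have the color of $u$. In the cubic above the only such competitor is $\sigma_{uu}\sigma_{vv}^2$, and $\rho_G(m)/\rho_G(\sigma_{uu}\sigma_{vv}^2)=\rho_G(\sigma_{uu})/\rho_G(\sigma_{vv})\neq 1$ because these images differ at edge-degree $2$, as shown above. Every remaining monomial of $p$ carries a squared off-diagonal factor and hence has vanishing constant term, so it cannot equal $\rho_G(m)$ either. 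Thus $\rho_G(m)$ is distinct from the image of every other monomial of $p$, and $I_G$ is not binomial.

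The main obstacle is constructing the exact relation $p$ in the genuine block-graph (non-tree) case: the clean local identity fails as soon as $u$ lies in a maximal clique of size $\geq 3$, since walks returning to $u$ through that clique contribute correction terms of edge-degree $\geq 3$ (already visible for a triangle). Expressing these corrections—equivalently, writing $z_{\lambda(u)}$ as a rational function of the incident $\sigma$-coordinates—is exactly what Theorem~\ref{thm:Talaska-2012} provides, but one must then verify that the resulting correction monomials do not introduce a second all-diagonal cubic whose image accidentally coincides with $\rho_G(m)$. A secondary technical point is the bookkeeping in the edge-degree-$2$ comparison, where the neighbor vertex-color factors $z_{\lambda(x)}$ contaminate the pure edge-color count; the specialization isolating $y_a$ is precisely what recovers the discrepancy $k_a(u)-k_a(v)$ cleanly.
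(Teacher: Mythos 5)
Your reduction to the elementary lemma and your edge-degree-$2$ comparison (specializing all variables except $y_a$ to isolate $k_a(u)\neq k_a(v)$) are fine, and the paper's proof makes essentially the same degree-$2$ comparison. The genuine gap is in the construction of $p$: the ``local identity'' $\rho_G(\sigma_{ww})^2-z_{\lambda(w)}\rho_G(\sigma_{ww})=\sum_{x\in N(w)}\rho_G(\sigma_{wx})^2$ is false not only for cliques of size $\geq 3$ but already for trees, indeed already for a single edge with differently colored endpoints. For one edge $\{w,x\}$ with edge variable $y$, the walk expansion gives $\rho_G(\sigma_{ww})=z_{\lambda(w)}/(1-z_{\lambda(w)}z_{\lambda(x)}y^2)$ and $\rho_G(\sigma_{wx})=z_{\lambda(w)}z_{\lambda(x)}y/(1-z_{\lambda(w)}z_{\lambda(x)}y^2)$, so the left-hand side of your identity is $z_{\lambda(w)}^3z_{\lambda(x)}y^2/(1-z_{\lambda(w)}z_{\lambda(x)}y^2)^2$ while the right-hand side is $z_{\lambda(w)}^2z_{\lambda(x)}^2y^2/(1-z_{\lambda(w)}z_{\lambda(x)}y^2)^2$; they agree only when $z_{\lambda(w)}=z_{\lambda(x)}$, i.e.\ when $w$ and its neighbor have the same color. (The true relation coming from $(K\Sigma)_{ww}=1$ is $\sigma_{ww}=z_{\lambda(w)}\bigl(1+\sum_{x\in N(w)}y_{\lambda(\{w,x\})}\sigma_{wx}\bigr)$, which involves the edge variables and so does not admit the naive elimination of $z_{\lambda(w)}$.) Consequently your cubic is in general not in $I_G$, precisely in the situation where you need it: for the path $1$--$2$--$3$ with $\lambda(1)=\lambda(3)$, concentration entries $k_{11}=k_{33}=a$, $k_{22}=b$, $k_{12}=d$, $k_{23}=e$, a direct cofactor computation gives
\[
\det(K)^3\cdot\rho_G(p)\;=\;(d^2-e^2)\bigl(a^2b^2-a^3b-abd^2-abe^2+a^2d^2+a^2e^2+d^2e^2\bigr),
\]
which is a nonzero polynomial whenever the two edge colors are distinct ($d\neq e$). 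So the element you feed into the elementary lemma does not lie in $I_G$, and the argument collapses at its first step.

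What is missing is a systematic source of elements of $I_G$ witnessing the asymmetry between $u$ and $v$, and this is exactly what the paper supplies. It takes $A=\{u\}\cup N(u)\cup N(v)$ and $B=\{v\}\cup N(u)\cup N(v)$ (with a small modification when $d(u,v)=1$) and certifies $\det\Sigma_{A,A}-\det\Sigma_{B,B}\in I_G$ via Talaska's theorem: every self-avoiding flow for $(A,A)$ must use the trivial path at $u$, every flow for $(B,B)$ the trivial path at $v$, and exchanging these trivial paths is a weight-preserving bijection between the two flow sets, whence $\Delta_{A,A}=\Delta_{B,B}$. The distinguished monomial is then the product of diagonal entries $m_u=\prod_{w\in A}\sigma_{ww}$, whose only possible binomial partner is $m_v=\prod_{w\in B}\sigma_{ww}$ (by the same constant-term argument you use), and the edge-degree-$2$ parts of $\rho_G(m_u)$ and $\rho_G(m_v)$ differ exactly when the multisets of edge colors at $u$ and $v$ differ. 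If you replace your elimination step by this determinantal construction, the remainder of your argument goes through essentially unchanged.
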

\begin{proof}
    Denote the two vertices violating the vertex regularity $u$ and $v$. 
    
    First assume $d(u,v)\geq 2$. Let $A = \lbrace u\rbrace\cup N(u)\cup N(v)$, $B = \lbrace v\rbrace\cup N(u)\cup N(v)$ and consider $\det\Sigma_{A,A}-\det\Sigma_{B,B}$.
    We use Theorem \ref{thm:Talaska-2012} to show that this polynomial
    lies in $\ker(\rho_G)$. Denote by $p$ the product 
    $$
        p := z_{\lambda(u)}\cdot\prod_{u_i\in N(u)}z_{\lambda(u_i)}\cdot \prod_{v_i\in N(v)}z_{\lambda(v_i)}=
        z_{\lambda(v)}\cdot \prod_{u_i\in N(u)}z_{\lambda(u_i)}\cdot\prod_{v_i\in N(v)}z_{\lambda(v_i)}.
    $$
    Note that every element of $\P_{A,A}$, resp. $\P_{B,B}$, must contain the trivial path from $u$ to $u$, resp. from $v$ to $v$. If we exchange the trivial path from $u$ to $u$ for the one from $v$ to $v$, we obtain $\F_{B,B}$ instead of $\F_{A,A}$. Hence by \ref{thm:Talaska-2012}, $\Delta_{A,A} = \Delta_{B,B}$. So,
    $$
        \rho_G(\det \Sigma_{A,A} - \det\Sigma_{B,B}) = p\cdot (\Delta_{A,A}-\Delta_{B,B}) = 0.
    $$

    Choose the monomial corresponding to the identity permutation in
    $\det\Sigma_{A,A}$ and denote it by $m_u$. In $\rho_G(m_u)$, the
    part of edge-degree zero is non-zero. Hence, the only monomial $m'$
    in our polynomial such that $\rho_G(m-m')$ might be zero is $m_v$,
    the monomial corresponding to the identity in $\det\Sigma_{B,B}$.

    Let us now compare a few smallest edge-degrees of monomials in
    $\rho_G(m_u)$ and $\rho_G(m_v)$. First, the components of 
    edge-degree zero are indeed the same, namely, $p$. Second, neither 
    $\rho_G(m_u)$ nor $\rho_G(m_v)$ contain monomials of edge-degree
    one. 

    In edge-degree two, all the terms in $\rho_G(m_u)$ are of the form
    $p\cdot z_{\lambda(i)}z_{\lambda(j)} y_{\lambda (e)}^2$, where
    $e=\{i,j\}$ is an edge with $i\in A$. Similarly for $\rho(m_v)$
    and $B$. Hence after canceling the terms that appear in both, we
    see that the edge-degree two part of
    $\rho_G(m_u - m_v)$ equals 
    $$
        p\cdot\left(z_{\lambda(u)}\cdot\sum_{u_i\in N(u)}
	z_{\lambda(u_i)}\cdot y_{\lambda(\{u,u_i\})}^2 -
	z_{\lambda(v)}\cdot \sum_{v_i\in N(v)} z_{\lambda(v_i)}\cdot
	y_{\lambda(\{v,v_i\})}^2\right).
    $$ 
    If the multiset of colors of edges adjacent to $u$ and $v$ are
    different, then this is a non-zero polynomial, so $I_G$ cannot be
    binomial.

    If $d(u,v) = 1$ we proceed similarly. We just consider $A = (N(u)\cup N(v))\setminus \lbrace v\rbrace$, $B = (N(u)\cup N(v))\setminus \lbrace u\rbrace$ and $p:= \prod_{w\in A}z_{\lambda (w)} = \prod_{w\in B}z_{\lambda (w)}$.
\end{proof}

\begin{proposition}\label{prop:edge-transitivity}
    If the block graph $G$ is vertex-regular but not edge-regular or not edge
    triangle-regular, then $I_G$ is not binomial.
\end{proposition}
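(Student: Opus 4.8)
The plan is to argue by contraposition, exactly in the spirit of Propositions~\ref{prop:Block} and~\ref{prop:Vertex}: assuming $G$ is a vertex-regular block graph that fails edge-regularity or edge triangle-regularity, I will produce a polynomial $p\in I_G=\ker(\rho_G)$ together with a monomial $\sigma^\alpha$ occurring in $p$ for which no other monomial $\sigma^\beta$ of $p$ satisfies $\sigma^\alpha-\sigma^\beta\in I_G$, and then invoke the binomial-generation criterion stated before Proposition~\ref{prop:Sigmaij}. Membership $p\in I_G$ will be certified through the fundamental identity $\rho_G(\det\Sigma_{A,B})=\Delta_{A,B}(M)\cdot\prod_{v\in B}z_{\lambda(v)}$ and Talaska's Theorem~\ref{thm:Talaska-2012}, while the failure of $\sigma^\alpha-\sigma^\beta\in I_G$ will be read off from a few lowest edge-degree parts of $\rho_G$ via the walk expansion of Proposition~\ref{prop:Sigmaij}. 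The bookkeeping rests on two facts about an edge $e=\{a,b\}$: the edge-degree-$1$ part of $\rho_G(\sigma_{a,b})$ equals $z_{\lambda(a)}z_{\lambda(b)}\,y_{\lambda(e)}$, recording the colors of the two endpoints; and the edge-degree-$2$ part equals $\sum_{t}z_{\lambda(a)}z_{\lambda(t)}z_{\lambda(b)}\,y_{\lambda(\{a,t\})}y_{\lambda(\{t,b\})}$, summed over the common neighbours $t$ of $a$ and $b$. In a block graph every such $t$ lies in the unique maximal clique containing $e$, so this part records exactly the colored triangles on $e$. Thus edge-regularity lives in edge-degree $1$ and edge triangle-regularity in edge-degree $2$, and vertex-regularity is what guarantees that the still-lower parts agree, so that the first discrepancy occurs precisely at the degree carrying the condition under attack.

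Suppose first that $G$ is not edge-regular: there are edges $e=\{a,b\}$ and $f=\{c,d\}$ with $\lambda(e)=\lambda(f)$ but $\{\lambda(a),\lambda(b)\}\neq\{\lambda(c),\lambda(d)\}$, so the degree-$1$ parts of $\rho_G(\sigma_{a,b})$ and $\rho_G(\sigma_{c,d})$ differ. A direct comparison $\sigma_{a,b}-\sigma_{c,d}$ is only a binomial and hence useless, so the witness $p$ must weave the edge variables together with the diagonal variables, whose degree-$0$ parts $z_{\lambda(a)},z_{\lambda(b)},z_{\lambda(c)},z_{\lambda(d)}$ carry the endpoint colors; the chosen monomial is then separated from its unique degree-competitor at the lowest edge-degree where their walk-expansions diverge, the divergence being forced by the mismatch of endpoint colors. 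The colored $K_{1,3}$ with centre $b$ and leaves $a,c,d$, where $a$ has a different color from $c$ and $d$, shows that $p$ cannot always be a single vanishing minor: there the witness is the genuinely four-term relation $\sigma_{a,c}^2+\sigma_{a,c}\sigma_{c,c}-\sigma_{a,c}\sigma_{c,d}-\sigma_{a,a}\sigma_{c,d}\in I_G$, whose monomial $\sigma_{a,c}\sigma_{c,c}$ can be matched on edge-degree grounds only by $\sigma_{a,a}\sigma_{c,d}$, and the two images differ precisely because $\lambda(a)\neq\lambda(c)$.

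If $G$ is edge-regular but not edge triangle-regular, I repeat the scheme one edge-degree higher. Now there are edges $e,f$ of the same color whose endpoints have matching colors (by edge-regularity) but whose multisets of colored triangles differ; by the opening discussion this says exactly that the edge-degree-$2$ parts of $\rho_G(\sigma_{a,b})$ and $\rho_G(\sigma_{c,d})$ differ while their edge-degree-$1$ parts now agree. I will assemble $p\in I_G$ analogously from minors governed by Theorem~\ref{thm:Talaska-2012}, select a monomial whose expansion agrees with its only competitor through edge-degree $1$, and separate the two at edge-degree $2$ using the triangle discrepancy and Proposition~\ref{prop:Sigmaij}. Here vertex- and edge-regularity together ensure that the degree-$0$ and degree-$1$ parts match, isolating the triangle information as the first point of divergence.

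The main obstacle is the construction of the ideal element $p$ and the verification that a single candidate partner is the only monomial capable of matching $\sigma^\alpha$ in the relevant low edge-degrees. A bare vanishing minor does not suffice, because the direct edge $a\to b$ always provides a self-avoiding flow, so no minor with $\sigma_{a,b}$ in its identity term vanishes; and a difference of two minors, one built around $e$ and one around $f$, fails to lie in $I_G$ exactly because the endpoint-color mismatch destroys the weight-preserving bijection of flows that would force $\rho_G$-equality. One therefore has to build $p$ from the flow identities more carefully—in effect eliminating the vertex-color parameters distinguishing the two endpoints, as in the $K_{1,3}$ example—and then rule out the finitely many competing monomials. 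Carrying this out uniformly over the possible relative positions of $e$ and $f$ (sharing a vertex, lying in a common maximal clique, or lying in distinct cliques joined by a shortest path) is where the bulk of the combinatorial work will lie.
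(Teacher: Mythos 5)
Your overall strategy (contraposition, Talaska flows, comparison of low edge-degree parts of $\rho_G$-images) is the paper's, and your $K_{1,3}$ example is in fact correct: the four-term quadratic relation you write does lie in $I_G$, the only edge-degree-compatible partner of $\sigma_{a,c}\sigma_{c,c}$ among its monomials is $\sigma_{a,a}\sigma_{c,d}$, and their images differ (at edge-degree $4$, through a factor $z_{\lambda(c)}-z_{\lambda(a)}$). But the proposal has a genuine gap: you never construct the ideal element $p$ in general. You explicitly defer ``the bulk of the combinatorial work'' of handling the relative positions of $e$ and $f$ and of ruling out competing monomials, and that is precisely what a proof must contain. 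What you have is a plan plus one worked example.

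Moreover, the claim that steers you away from the correct construction is false. You assert that a difference of two minors, one built around $e=\{u_1,v_1\}$ and one around $f=\{u_2,v_2\}$, cannot lie in $I_G$ when the endpoint colors mismatch, because the bijection of flows is not weight-preserving. The paper's proof shows otherwise: take the \emph{single common} index set $A=\{u_1,v_1,u_2,v_2\}\cup N(u_1)\cup N(v_1)\cup N(u_2)\cup N(v_2)$ and consider $\det\Sigma_{A\setminus\{u_1\},A\setminus\{v_1\}}-\det\Sigma_{A\setminus\{u_2\},A\setminus\{v_2\}}$. In each minor every self-avoiding flow consists of the edge path $(v_i,\{v_i,u_i\},u_i)$, trivial paths at all other vertices of $A$, and cycles avoiding $A$; the path weight $z_{\lambda(v_i)}y_{\lambda(e_i)}$ combines with the prefactor $\prod_{v\in A\setminus\{v_i\}}z_{\lambda(v)}$ to give $y_{\lambda(e_i)}\prod_{v\in A}z_{\lambda(v)}$, which is symmetric in all four endpoints. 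Hence the difference lies in $\ker\rho_G$ using only $\lambda(e)=\lambda(f)$; no condition on vertex colors is needed, and the one common set $A$ removes all case distinctions about relative position (the paper notes the proof even works with $A=V(G)$). Relatedly, your degree bookkeeping does not survive the embedding into products: once the endpoint factors $z_{\lambda(u_i)},z_{\lambda(v_i)}$ are absorbed into the common factor $q=\prod_{v\in A}z_{\lambda(v)}$, edge-regularity is no longer visible at edge-degree $1$. In the paper's proof the triangle mismatch appears at edge-degree $2$, while the endpoint-color mismatch appears only at edge-degree $3$, via the term $q\,z_{\lambda(u_i)}z_{\lambda(v_i)}y_{\lambda(e)}^3$ and after an argument that the triangle terms cannot interfere with it---just as in your own example the discrepancy shows up two degrees above the lowest nonvanishing degree, not at it.
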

\begin{proof}
    Consider two edges $\{u_1,v_1\}$ and $\{u_2,v_2\}$ of the same color. Let $A = \lbrace u_1, v_1, u_2,v_2\rbrace \cup N(u_1)\cup N(v_1)\cup N(u_2)\cup N(v_2)$ 
    and consider $\det\Sigma_{A\setminus\lbrace u_1\rbrace, A\setminus\lbrace v_1\rbrace} - \det\Sigma_{A\setminus\lbrace u_2\rbrace,A\setminus\lbrace v_2\rbrace}$.

    This polynomial lies in $\ker(\rho_G)$: Elements of $\P_{A\setminus\lbrace u_1\rbrace,A\setminus\lbrace v_1\rbrace}$ must contain the path $(v_1,\{v_1,u_1\},u_1)$ and the trivial paths corresponding to vertices $u_2$ and $v_2$. Similarly for $\P_{A\setminus\lbrace u_2\rbrace,A\setminus\lbrace v_2\rbrace}$. Except for those paths, elements of $\F_{A\setminus\lbrace u_1\rbrace,A\setminus\lbrace v_1\rbrace}$ and $\F_{A\setminus\lbrace u_2\rbrace,A\setminus\lbrace v_2\rbrace}$ are the same. Using Theorem \ref{thm:Talaska-2012}, the fact that $\lambda(\{u_1,v_1\}) = \lambda(\{u_2,v_2\})$ and factoring out $z_{\lambda(u_1)}z_{\lambda(v_1)}z_{\lambda(u_2)}z_{\lambda
    (v_2)}y_{\lambda(u_1,v_1)}$ from both sides, we obtain
    $$
        \rho_G(\det\Sigma_{A\setminus\lbrace u_1\rbrace,
	A\setminus\lbrace v_1\rbrace}) =
	\rho_G(\det\Sigma_{A\setminus\lbrace
	u_2\rbrace,A\setminus\lbrace v_2\rbrace}).
    $$
    Consider the monomial $m_1 = \sigma_{v_1,u_1}\cdot \prod_{v\in
    A\setminus \lbrace u_1, v_1\rbrace} \sigma_{v,v}$.
    Using~\eqref{eq:sigma-i-j}, we see that $\rho(m_1)$ has a non-zero
    component of edge-degree one. The only other such monomial in our
    difference of two determinants is $m_2 = \sigma_{v_2,u_2}\cdot \prod_{v\in A\setminus \lbrace u_2, v_2\rbrace} \sigma_{v,v}$.

    Denote by $q$ the product $\prod_{v\in A} z_{\lambda(v)}$ and
    consider the edge-degree two monomials of $\rho_G(m_1)$ and
    $\rho_G(m_2)$. For $i =1,2$, all the
    terms in $m_i$ are of the form $q\cdot z_{\lambda(x)}
    y_{\lambda(\{v_i,x\})} y_{\lambda(\{x,u_i\})}$, where $x\in N(u_i)\cap
    N(v_i)$. This shows that if the two edges $\{u_1,v_1\}$ and
    $\{u_2,v_2\}$ are adjacent to
    different multisets of colored triangles, then $I_G$ is not binomial. In
    particular, if we denote by $C_i$ the set of vertices in the
    maximal clique containing $\{u_i,v_i\}$, then
    $\lambda(C_1\setminus\lbrace u_1,v_1\rbrace) =
    \lambda(C_2\setminus\lbrace u_2,v_2\rbrace)$ as multisets,
    otherwise $m_1\neq m_2$ and the ideal is not binomial.

    Consider now edge-degree three. There are three types of path
    systems leading to the following terms in $\rho_G(m_i)$:
    \begin{enumerate}
        \item $q\cdot y_{\lambda(\{u_i,v_i\})}\cdot
	z_{\lambda(x)}z_{\lambda(y)}y_{\lambda(\{x,y\})}^2$ for some $x\in A$, $y\in N(x)$,
        \item $q\cdot
	z_{\lambda(x)}z_{\lambda(y)}y_{\lambda(\{v_i,x\})}y_{\lambda(\{x,y\})}
	y_{\lambda(\{y,u_i\})}$ for some $x,y\in C_i\setminus \lbrace u_i,v_i\rbrace$,
        \item $q\cdot
	z_{\lambda(u_i)}z_{\lambda(v_i)}y_{\lambda(\{u_i,v_i\})}^3$.
    \end{enumerate}

    The terms of the first type are the same for $i=1,2$. Regarding terms
    of type (2): since $\lambda(C_1\setminus\lbrace u_1,v_1\rbrace) =
    \lambda(C_2\setminus\lbrace u_2,v_2\rbrace)$ as multisets, the number
    of terms of type (2) with fixed $z_{\lambda(x)}z_{\lambda(y)}$ is the
    same in $m_1$ and $m_2$. This is in particular true for those terms
    with $\{\lambda(x),\lambda(y)\}=\{\lambda(u_i),\lambda(v_i)\}$
    as multisets, which might yield the same monomials
    as in (3).  Hence if $\rho(m_1)$ equals $\rho(m_2)$,
    then the terms of type (3) must agree. This implies that
    $\{\lambda(u_1),\lambda(v_1)\}=\{\lambda(u_2),\lambda(u_2)\}$. If
    this does not hold, then $I_G$ is not binomial.

    Note that the proof works even if some of the pairs of vertices $\{u_1, u_2\}$, $\{u_1,v_2\}$, $\{v_1,u_2\}$, $\{v_1,v_2\}$ are neighbors or even the same vertex.
\end{proof}

Note that in the preceding two proofs we could consider larger sets $A$ and $B$. For example in Proposition~\ref{prop:edge-transitivity}, the proof works all the same with $A = V(G)$.

\begin{corollary}
    Let $G$ be a colored graph such that the sets of colors of
    vertices and of edges are disjoint. If $I_G$ is binomial, $G$ is a triangle-regular block graph.
\end{corollary}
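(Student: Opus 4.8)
The plan is to prove the contrapositive: assuming that $G$ is \emph{not} a triangle-regular block graph, I will show that $I_G$ is not binomial, by assembling the three preceding propositions into an exhaustive case analysis. The key observation is that, by definition, triangle-regularity is the conjunction of three properties---vertex-regularity, edge-regularity, and edge triangle-regularity---so the negation of ``$G$ is a triangle-regular block graph'' splits cleanly into a short list of mutually exhaustive cases, each already handled above.

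First, if $G$ fails to be a block graph, then Proposition~\ref{prop:Block} immediately gives that $I_G$ is not binomial, and this branch is settled regardless of any regularity considerations. So one may assume henceforth that $G$ \emph{is} a block graph but is not triangle-regular; this is precisely the hypothesis under which the remaining two propositions are stated.

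Within this assumption, I split once more. If $G$ is not vertex-regular, then Proposition~\ref{prop:Vertex} applies directly and shows $I_G$ is not binomial. Otherwise $G$ is vertex-regular, and since it fails triangle-regularity it must violate edge-regularity or edge triangle-regularity; in either subcase Proposition~\ref{prop:edge-transitivity} yields that $I_G$ is not binomial. These branches together exhaust every way in which a block graph can fail to be triangle-regular.

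Since the cases cover all possible failures of ``$G$ is a triangle-regular block graph,'' we conclude that whenever $I_G$ is binomial, $G$ must be a triangle-regular block graph, which is the claim. There is no substantive obstacle here: the corollary is a purely logical consolidation of Propositions~\ref{prop:Block}, \ref{prop:Vertex}, and~\ref{prop:edge-transitivity}. The only point requiring care is bookkeeping---each proposition must be invoked under exactly the hypotheses for which it was proved, in particular that Proposition~\ref{prop:Vertex} and Proposition~\ref{prop:edge-transitivity} presuppose that $G$ is already a block graph---and the nested case split is arranged precisely so that this is guaranteed.
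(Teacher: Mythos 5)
Your proposal is correct and takes essentially the same approach as the paper: the paper's proof is a one-line citation of Propositions~\ref{prop:Block}, \ref{prop:Vertex}, and~\ref{prop:edge-transitivity}, and your nested case analysis (not a block graph; block graph but not vertex-regular; vertex-regular block graph failing edge- or edge triangle-regularity) is exactly the logical consolidation intended, with each proposition invoked under its stated hypotheses. The only cosmetic difference is that the paper additionally cites Lemma~\ref{lm:Triangle} (triangle-regularity implies vertex triangle-regularity), which is not logically required for the statement as worded, since the paper defines triangle-regular as the conjunction of precisely the three properties your cases cover.
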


\begin{proof}
This follows from
Propositions~\ref{prop:Block},~\ref{prop:Vertex},~\ref{prop:edge-transitivity},
and Lemma~\ref{lm:Triangle}.
\end{proof}

\section{Combinatorial properties of triangle-regular block graphs}
\label{sec:CombProp}

We start by illustrating the main idea of this section in the following example:
\begin{example}
    Let $G$ be the triangle-regular block graph defined by the
    following picture, where the vertex colors are $1, \ldots ,7$ and
    the edge colors are as indicated in the picture: 

    \begin{center}
        \includegraphics[width=0.5\textwidth]{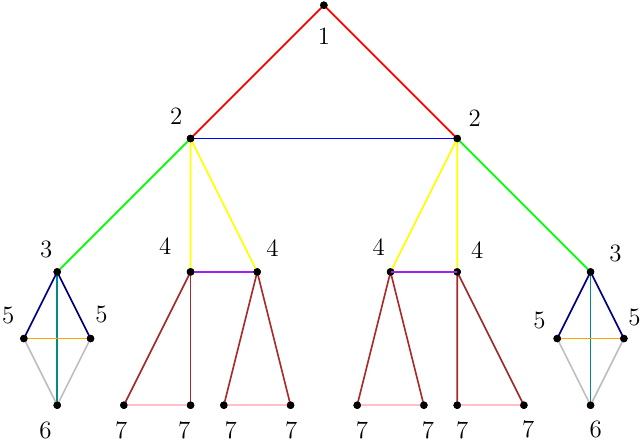}
    \end{center}

    Then we observe the following: two maximal cliques either have
    different edge and vertex colors (except for the vertices
    contained in multiple maximal cliques), or they have the same colors. 
    Arranging the graph as above, maximal cliques with the same colors
    lie at the same ``level''. 

    All shortest path only go ``up'' and then ``down'' in the picture.
    We also see that shortest paths $u \leftrightarrow v$ with
    $\lambda (u) = \lambda (v)$, are palindromic in terms of colors.
    Furthermore, if two shortest paths have the same multiset of edge
    and vertex colors, then they are isomorphic as colored paths. 
    
    The following picture describes the underlying colored rooted tree, where vertices of the same color correspond to maximal cliques with the same colors. Edges with the same color correspond to the vertices of the original graph of the same color connecting two or more maximal cliques:

    \begin{center}
        \includegraphics[width=0.4\textwidth]{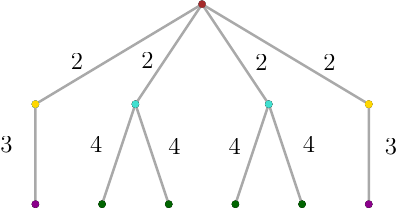}
    \end{center}
    
    The tree is very symmetric; in particular, for any two vertices of
    the same color, it has a color-preserving automorphism mapping one
    to the other. 
\end{example}

In the following, we will formalize these observations and show that
they hold for all triangle-regular block graphs.

\begin{proposition}\label{prop:removing-cliques}
    Given a connected, triangle-regular block graph $G$ with at least two maximal
    cliques, we can remove a set $\mathcal C$ of cliques from $G$ so that the
    following conditions hold. Here, by removing we mean deleting all
    the vertices contained {\em only} in cliques in $\mathcal{C}$. 
    \begin{enumerate}
        \item We obtain a connected triangle-regular block graph.
        \item All the cliques in $\mathcal{C}$ have the same multiset
	of colored vertices and colored edges.
        \item If $v\in C\in \mathcal{C}$ is a vertex, and $v' \in V(G)$ is a
	vertex of the same color: $\lambda(v')=\lambda(v)$, then $v'\in V(C')$ for some $C'\in \mathcal{C}$, similarly for edges.
        \item For each $C\in \mathcal{C}$ there is exactly one vertex
	$v_C\in V(C)$ contained in at least two maximal cliques. The
	color $\lambda(v_C)$  is independent of the choice of $C$, and
	the only vertices of $G$ with this color are the vertices
	$v_C$ with $C$ ranging through $\cC$.
        \item For any vertex $v\in V(C)\setminus \lbrace v_C\rbrace$,
	$\lambda(\{v,v_C\})$ depends only on $\lambda(v)$.
        \item Let $C\in \mathcal C$ and let $v_{C'}$ be a vertex with $\lambda(v_{C'}) = \lambda(v_C)$. Then the number of cliques in $\mathcal C$ containing $v_{C'}$ is the same as for $v_C$.
    \end{enumerate}
\end{proposition}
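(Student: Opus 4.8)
The plan is to peel off an entire ``colour class'' of leaf cliques at once. The starting observation is a clean colour-invariant: in a block graph a vertex $v$ lies in a single maximal clique (is \emph{simplicial}) exactly when the number of triangles through $v$ equals $\binom{\deg v}{2}$, since two neighbours of $v$ are adjacent iff they lie in a common clique with $v$. By vertex-regularity the degree of a vertex depends only on its colour, and by Lemma~\ref{lm:Triangle} so does the number of incident triangles; hence \emph{simpliciality depends only on the vertex colour}. Thus the vertex colours split into ``simplicial'' and ``cut'' colours, and every vertex of a simplicial colour lies in exactly one maximal clique. I will use this repeatedly.

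Next I would fix any leaf clique $C_0$, with unique cut vertex $v_0$ of (cut) colour $\gamma$, and let $S$ be the multiset of colours of the simplicial vertices $V(C_0)\setminus\{v_0\}$; then take $\mathcal{C}$ to be the set of \emph{all} leaf cliques whose multiset of vertex colours equals that of $C_0$. The crux is to show this $\mathcal{C}$ captures every $\gamma$-vertex and every $S$-vertex. For a pendant edge $e_0=\{v_0,s_0\}$ of $C_0$ of colour $\epsilon$, edge-regularity fixes the endpoint colours of every $\epsilon$-edge and edge triangle-regularity fixes its multiset of coloured triangles. Since the simplicial endpoint $b$ of any $\epsilon$-edge $\{a,b\}$ lies in a unique clique $M(b)$, \emph{every} triangle on $\{a,b\}$ lies inside $M(b)$; reading off these triangles recovers the full colour multiset of $M(b)$ and equates it with that of $C_0$. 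As the non-$\gamma$ vertices of $M(b)$ then carry simplicial colours, $M(b)$ has the single cut vertex $a$ and so is a leaf clique in $\mathcal{C}$. Vertex-regularity now guarantees that every $\gamma$-vertex has an $\epsilon$-edge, hence is the cut vertex $a$ of such a clique, and that every $S$-vertex is the simplicial endpoint $b$ of one (using the pendant edge colour matching its own colour); this gives conditions (3) and (4), the edge half of (3) following because any edge of a colour occurring in $\mathcal{C}$ has a simplicial endpoint, whose clique lies in $\mathcal{C}$ and contains the edge.

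Conditions (5), (6) and (2) then fall out of the same regularities. A simplicial vertex has a unique $\gamma$-neighbour, namely its cut vertex, and its colour-determined edge multiset therefore pins down the colour of the edge joining it to that cut vertex, giving (5). The number of $\epsilon$-edges at a $\gamma$-vertex is colour-invariant by vertex-regularity, and since all of them lie inside type-$\tau$ cliques at that vertex, dividing by the number of $S$-vertices per clique of the relevant colour yields the same count of cliques of $\mathcal{C}$ at every $\gamma$-vertex, i.e.\ (6). For (2), the colour multiset of the edges of a clique in $\mathcal{C}$ equals half the sum, over its vertices, of their within-clique edge multisets; for simplicial vertices this is the full (colour-determined) edge multiset and for the cut vertex it is determined by (5), so the total depends only on the colours and is constant over $\mathcal{C}$.

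The main work, and the step I expect to be most delicate, is condition (1). Connectivity and the block property are immediate, as we only strip pendant cliques off cut vertices; recall triangle-regularity means vertex-, edge- and edge triangle-regularity. Edge-regularity is inherited trivially, since no surviving edge changes its endpoints. For edge triangle-regularity the key point---again using that the deleted vertices are simplicial---is that a surviving edge cannot lie on a triangle through a deleted vertex (such a triangle would force both its endpoints into a single removed clique, which retains only its cut vertex); hence every surviving edge keeps exactly its triangles and its coloured-triangle multiset is unchanged. For vertex-regularity, the only surviving vertices losing incident edges are the cut vertices $v_C$, which lose precisely the pendant edges to their deleted simplicial neighbours; by (5) these colours depend only on the neighbours' colours, and by (6) together with (2) the lost multiset is the same for every $\gamma$-vertex, so vertex-regularity persists. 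I would finally treat separately the degenerate situation where $\mathcal{C}$ exhausts all maximal cliques: there the peeling collapses $G$ to the single shared cut vertex, a $K_1$, which is still a connected triangle-regular block graph and is exactly the base case required by the induction in the later sections.
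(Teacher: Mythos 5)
Your proposal is correct, and at its core it is the same peeling strategy as the paper's proof: both arguments isolate a leaf clique $C_0$, observe via the triangle count $\binom{\deg v}{2}$ (using Lemma~\ref{lm:Triangle}) that being simplicial is determined by the vertex colour, and then remove the family $\cC$ of all leaf cliques carrying the colour data of $C_0$; in fact your $\cC$ coincides with the paper's, which is defined instead as the set of all maximal cliques containing a vertex of colour $\lambda(u_0)$ for one fixed simplicial vertex $u_0\in C_0$. The main technical difference is the mechanism used to show that all cliques in $\cC$ have the same colour multisets: the paper reads off the vertex-colour multiset of the clique of a simplicial vertex directly from vertex- and edge-regularity at that vertex, whereas you recover it from the multiset of coloured triangles on a pendant edge via edge triangle-regularity. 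Your mechanism works, but it is slightly more expensive and glosses over a small point: from an unordered coloured triangle on an $\epsilon$-edge one must still extract the third vertex's colour, which requires applying edge-regularity to the two side edges and cancelling the known endpoint colours $\gamma$ and $\lambda(s_0)$ (this is unambiguous, but deserves a line). On two points your write-up is actually more careful than the paper's: the paper disposes of condition (1) in one sentence, while you explicitly verify that surviving edges lose no triangles (so edge triangle-regularity is inherited) and that exactly the vertices $v_C$ lose a colour-uniform multiset of pendant edges (so vertex-regularity is inherited); and you flag the degenerate case where $\cC$ exhausts all maximal cliques, which forces all of them to share a single cut vertex and forces the reading that the removed vertices are exactly $\bigcup_{C\in\cC}\bigl(V(C)\setminus\{v_C\}\bigr)$, so that $G\setminus\cC$ is a single vertex rather than empty --- a corner case the paper leaves implicit but which its later induction (via depth functions) silently relies on.
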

\begin{proof}
    We note that if a clique with only one vertex is connected to
    another vertex with an edge, then this forms another clique. Hence
    if there exists at least two maximal cliques, each must contain at least two vertices. 

    By the construction of a block graph, there is a maximal clique
    $C_0$ that shares exactly one vertex $v_{C_0}$ with some other
    maximal clique(s). There has to be some vertex $u_0\neq v_{C_0}$
    in $C_0$. We claim that 
    \[ \mathcal{C} = \lbrace C\text{ maximal
    clique}\mid \exists u\in V(C):\, \lambda(u) = \lambda(u_0)
    \rbrace \] is a collection as desired. 

    Fix some $C\in \mathcal{C}$ and $u\in C$ with $\lambda(u) =
    \lambda(u_0)$. Denote by $N(u_0)$ the set of neighbors of $u_0$
    and set $n := |N(u_0)|$. Since all the neighbors of $u_0$ are
    contained in $C_0$, $u_0$ is contained in $\binom{n}{2}$
    triangles. By triangle regularity of $G$, $u$ also has $n$
    neighbors and is contained in $\binom{n}{2}$ triangles. This is
    possible only if all neighbors of $u$ are contained in $C$, so $u$
    is contained in a unique maximal clique. By edge and vertex regularity, we also know the multiset of colors of neighbors of $u$. This proves item (2) for vertices. 

    Item (3) for vertices follows from the definition of $\mathcal{C}$
    and triangle regularity of $G$: If $v_0$ is any other vertex in $C_0$, and $v$ is a vertex with $\lambda(v) = \lambda(v_0)$, by triangle regularity $v$ has a vertex $u$ of color $\lambda(u_0)$ as a neighbor. Since $u$ is contained in a unique maximal clique, $v$ must be in it too.
    
    By vertex regularity, $\lambda (v_{C_0})$ appears exactly once
    in the multiset of colors of vertices in $C_0$ ($|N(v_{C_0})|
    > |V(C_0)|-1$, while all other vertices in $C_0$ have exactly
    $|V(C_0)|-1$ neighbors). For $C \in \cC$ let $v_C\in V(C)$ be the
    unique vertex of color $\lambda(v_{C_0})$. It has more than $|C|-1$
    edges, hence it is contained in another clique. Repeating the argument
    in the previous paragraph for all the other vertices in $C_0$, we
    see that $v_C$ is the only vertex of $C$ which can be contained in
    some other clique. This yields all but the last statement in item
    (4). For the last statement, note that by item (3) for vertices,
    any vertex $v$ with $\lambda(v)=\lambda(v_C)$ lies in some $C' \in
    \cC$. If $v \neq v_{C'}$, then $v$ has fewer neighbors than
    $v_{C'}$, and hence by vertex regularity cannot be of the same color. 

    Item (5) follows from edge and vertex regularity: consider
    vertices $v_0,v\in V(C) \setminus \{v_C\}$ with $\lambda(v) =
    \lambda(v_0)$. Then there is an edge of color
    $\lambda(\{v_0,v_{C}\})$ incident to $v$. Since $v_C$ is the unique vertex of color $\lambda(v_{C})$ in $C$, the other endpoint of that edge is indeed $v_C$.

    Now we prove item (2) for edges: Consider $C\in \mathcal C$ and an
    edge color $a$ appearing in $E(C)$. By item (2) for vertices and
    item (5), the multiset of colors of edges in $C$ containing $v_C$
    does not depend on the choice of $C$. For $v\in V(C) \setminus
    \{v_C\}$, let $M_a(v)$ be the number of edges of color $a$
    incident to $v$ in the clique $C'$ induced by $C$ on $V(C) \setminus
    \{v_C\}$. Then the number of edges of color $a$ in $C'$ is $\frac
    1 2\cdot \sum_{v\in V(C) \setminus \{v\}} M_a(v)$. Since $M_a(v)$
    depends only on $\lambda (v)$ and the multiset of colors of
    vertices in $C'$ does not depend on the choice of $C$, $\frac 1 2\cdot \sum_{v\in V(C)} M_a(v)$ does not depend on it either.
    
    Item (3) for edges: Consider an edge $e\in E(G)$ and $C \in \cC$
    with an edge $f=\{u,v\} \in E(C)$ such that $\lambda (e) = \lambda
    (f)$. Without loss of generality, $u$ is not equal to $v_C$,
    and hence $u$ has fewer neighbors than $v_C$ by item (4). By
    edge regularity, $e=\{u',v'\}$ where $\lambda(u)=\lambda(u')$, and
    by vertex regularity the number of neighbors of $u'$
    is equal to that of $u$. By (3) for vertices, $u'$ lies 
    in a maximal clique $C' \in \cC$, respectively, and since $u'$
    has fewer neighbors than $v_{C'}$, we find that $e$ also lies in
    $C'$. 

    Item (6): By item (3) for edges and by vertex regularity, the number
    of edges incident to $v_{C'}$ and contained in a clique in $\mathcal C$ does not depend on the choice of $v_{C'}$ among
    the vertices of color $\lambda(v_C)$. By item (2) for vertices, each
    clique in $\cC$ has the same number of vertices. So the claim follows.

    It remains to show item (1). By (4) the only thing that can go wrong is the regularity of vertices with color $\lambda(v_C)$. Similarly as we have shown item (6), this does not happen.
\end{proof}

\begin{notation}
    We denote the graph obtained in Proposition \ref{prop:removing-cliques} as $G\setminus \mathcal{C}$. The set of vertices in $C\in\mathcal{C}$ different from $v_C$ we denote by $V^\star(C)$. 
\end{notation}

\begin{definition}\label{def:depth-function}
    Let $G$ be a connected triangle-regular block graph $G$. We say that a function $\kappa_G: V(G)\cup E(G)\to \N$ is a \emph{depth function of $G$} if 
    \begin{enumerate}
        \item either $G$ has only one maximal clique and $\kappa
	\equiv 1$, or 
        \item $G$ has at least two maximal cliques and there is a
	collection  $\mathcal{C}$ as in Proposition
	\ref{prop:removing-cliques} such that the restriction
	$\kappa_{G \setminus \cC}$ of
	$\kappa_G$ to $V(G \setminus \cC) \cup E(G \setminus \cC)$ is
	a depth function for $G
	\setminus \cC$ and furthermore for any $C \in \cC$ and any
	$x \in (V(C) \setminus \{v_C\}) \cup E(C)$ 
	we have $\kappa_G(x)=\kappa_{G \setminus \cC}(v_C)+1$. 
    \end{enumerate}
\end{definition}

\begin{lemma}
    If $G$ is a connected triangle-regular block graph, then it has a depth function.
\end{lemma}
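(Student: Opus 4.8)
The plan is to prove the lemma by induction on the number of maximal cliques in $G$, mirroring the recursive structure of both Definition~\ref{def:depth-function} and Proposition~\ref{prop:removing-cliques}.

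\medskip

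First I would treat the base case. If $G$ has only one maximal clique, then $G$ is itself a complete graph, and the constant function $\kappa_G \equiv 1$ on $V(G) \cup E(G)$ is a depth function by clause (1) of Definition~\ref{def:depth-function}. This requires no further work.

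\medskip

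For the inductive step, suppose $G$ has at least two maximal cliques, and assume as the induction hypothesis that every connected triangle-regular block graph with fewer maximal cliques than $G$ admits a depth function. Since $G$ has at least two maximal cliques, Proposition~\ref{prop:removing-cliques} applies and furnishes a collection $\cC$ of cliques with all the listed properties; in particular, by item (1) of that proposition the graph $G \setminus \cC$ is again a connected triangle-regular block graph. The key point is that $G \setminus \cC$ has strictly fewer maximal cliques than $G$: by definition of the removal operation, the cliques in $\cC$ that are genuinely deleted (those whose non-$v_C$ vertices are removed) are maximal cliques of $G$ that no longer appear in $G \setminus \cC$, and since $\cC$ is nonempty at least one maximal clique disappears. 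I would want to make this strict-decrease argument precise, confirming that no new maximal cliques are created by the deletion; this follows because deleting the sets $V^\star(C)$ only removes vertices and edges, and the remaining cliques of $G \setminus \cC$ are exactly the maximal cliques of $G$ not lying in $\cC$, possibly with the vertices $v_C$ retained. By the induction hypothesis, $G \setminus \cC$ therefore has a depth function $\kappa_{G \setminus \cC}$.

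\medskip

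Finally I would extend $\kappa_{G \setminus \cC}$ to a function $\kappa_G$ on all of $V(G) \cup E(G)$. On $V(G \setminus \cC) \cup E(G \setminus \cC)$ set $\kappa_G$ equal to $\kappa_{G \setminus \cC}$, and for each $C \in \cC$ and each $x \in (V(C) \setminus \{v_C\}) \cup E(C)$ define $\kappa_G(x) = \kappa_{G \setminus \cC}(v_C) + 1$. I must check that this prescription is well defined and covers every element of $V(G) \cup E(G)$ exactly once. Every vertex of $G$ either survives in $G \setminus \cC$ (in which case it is assigned by $\kappa_{G \setminus \cC}$) or lies in $V^\star(C) = V(C) \setminus \{v_C\}$ for some $C \in \cC$ and receives the value $\kappa_{G \setminus \cC}(v_C) + 1$; the edges split analogously into those of $G \setminus \cC$ and those inside some $C \in \cC$. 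The only potential ambiguity is an element $x$ assigned via two different cliques $C, C' \in \cC$, but item (2) of Proposition~\ref{prop:removing-cliques} together with item (4) (the color $\lambda(v_C)$ being independent of $C$) ensures the defining value $\kappa_{G \setminus \cC}(v_C)+1$ is the same regardless of which clique is used, so no conflict arises. With this extension in place, clause (2) of Definition~\ref{def:depth-function} is satisfied by construction, completing the induction.

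\medskip

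The main obstacle I anticipate is the bookkeeping that guarantees the inductive measure genuinely decreases and that the extension is consistent: one has to argue carefully that removing $\cC$ strictly reduces the number of maximal cliques and creates no new ones, and that an element lying in several cliques of $\cC$ (through a shared separating vertex $v_C$) is never assigned conflicting depths. Both of these rest squarely on the structural guarantees of Proposition~\ref{prop:removing-cliques}, so the difficulty is organizational rather than conceptual.
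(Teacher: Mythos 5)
Your proof is correct and follows essentially the same route as the paper, which disposes of this lemma by exactly this induction on the number of maximal cliques combined with Proposition~\ref{prop:removing-cliques}. One small remark: the well-definedness worry at the end is vacuous, since in a block graph two distinct maximal cliques of $\mathcal{C}$ can share at most the single vertex $v_C$, so the sets $V^\star(C)\cup E(C)$ for distinct $C\in\mathcal{C}$ are pairwise disjoint and no element ever receives two assignments (appealing to items (2) and (4) would only give equality of colors, not of depths, so it is fortunate that the conflict cannot arise at all).
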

\begin{proof}
    This follows immediately from
    Proposition~\ref{prop:removing-cliques} and induction on the number of maximal cliques.
\end{proof}

\textbf{Convention:} Note that the depth function is not necessarily
unique. For example, let $G$ be the union of two distinct maximal cliques, then there exists two depth functions. 
In the following, let $G$ be a connected, triangle-regular block
graph, pick one of its depth functions, and call it $\kappa$. Whenever we use induction on the number of maximal cliques, we adopt the notation of Proposition \ref{prop:removing-cliques}.

\begin{lemma}\label{lemma:lambda-to-kappa}
    If $x,y\in G$ are edges or vertices with $\lambda(x) = \lambda(y)$, then $\kappa(x) = \kappa(y)$.
\end{lemma}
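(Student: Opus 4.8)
The plan is to induct on the number of maximal cliques of $G$, exploiting the recursive definition of the depth function. The base case, when $G$ has a single maximal clique, is immediate: there $\kappa \equiv 1$, so any two elements have equal depth. For the inductive step I would fix the collection $\mathcal{C}$ used to define $\kappa$, write $G' := G \setminus \mathcal{C}$, and recall from item~(1) of Proposition~\ref{prop:removing-cliques} that $G'$ is again a connected triangle-regular block graph, with strictly fewer maximal cliques, and that $\kappa|_{G'}$ is by construction a depth function of $G'$. Thus the inductive hypothesis applies to the pair $(G', \kappa|_{G'})$, and throughout I identify $\kappa$ on surviving elements with $\kappa_{G'}$.

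The backbone of the argument is the partition of $V(G) \cup E(G)$ into the \emph{removed} elements $R := \bigcup_{C \in \mathcal{C}} (V^\star(C) \cup E(C))$ and the \emph{surviving} elements $V(G') \cup E(G')$. Here I would first note the routine fact that every edge incident to a removed vertex lies in the corresponding clique $C \in \mathcal{C}$ (such a vertex lying in a unique maximal clique), so that $R$ and $V(G') \cup E(G')$ genuinely partition all vertices and edges, with each cut vertex $v_C$ surviving. The crucial structural input — and the step I expect to be the main obstacle — is a \emph{color-separation} statement: no color occurs both on an element of $R$ and on a surviving element. For vertices this follows from items~(3) and~(4): a removed vertex $v \in V^\star(C)$ has $\lambda(v) \neq \lambda(v_C)$, since the distinguished color $\lambda(v_C)$ is carried only by cut vertices, and any vertex of color $\lambda(v)$ again lies in some $C' \in \mathcal{C}$, necessarily in $V^\star(C')$ (the only survivor of $C'$ being $v_{C'}$, of the wrong color). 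For edges the same conclusion comes directly from item~(3) for edges; I would take care here with edges incident to a surviving $v_C$, which are removed even though one endpoint survives, but item~(3) handles these uniformly.

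With color-separation in hand the lemma follows by cases on whether $x$ and $y$ lie in $R$; note that the standing disjointness of vertex and edge colors forces $x,y$ to be simultaneously both vertices or both edges. If both survive, then $x,y \in V(G') \cup E(G')$ have equal color in $G'$, and the inductive hypothesis gives $\kappa(x) = \kappa_{G'}(x) = \kappa_{G'}(y) = \kappa(y)$. If both are removed, say $x \in V^\star(C_x) \cup E(C_x)$ and $y \in V^\star(C_y) \cup E(C_y)$, then by definition $\kappa(x) = \kappa_{G'}(v_{C_x}) + 1$ and $\kappa(y) = \kappa_{G'}(v_{C_y}) + 1$; since all cut vertices share the color $\lambda(v_{C_x}) = \lambda(v_{C_y})$ by item~(4), the inductive hypothesis applied to these two surviving vertices yields $\kappa_{G'}(v_{C_x}) = \kappa_{G'}(v_{C_y})$, hence $\kappa(x) = \kappa(y)$. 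The mixed case, one of $x,y$ removed and the other surviving, is excluded by color-separation. This exhausts all cases and closes the induction.
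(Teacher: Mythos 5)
Your proof is correct and follows essentially the same route as the paper: induction on the number of maximal cliques, with the mixed case excluded via item~(3) of Proposition~\ref{prop:removing-cliques}, the removed--removed case handled by item~(4) together with the recursive definition of the depth function, and the surviving case by the induction hypothesis. The only difference is presentational: you spell out explicitly the partition into removed and surviving elements and the color-separation claim (including the edge cases around the cut vertices $v_C$), which the paper's shorter proof leaves implicit.
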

\begin{proof}
    We proceed by induction on the number of maximal cliques. If there
    is only one maximal clique, $\kappa$ is constant, hence the claim
    follows immediately. Otherwise, let $x$ and $y$ be vertices or
    edges such that $\lambda(x) = \lambda(y)$. If both are in
    $G\setminus \mathcal C$, $\kappa(x) = \kappa(y)$ by induction. If
    one of them is in $\mathcal C$, the other one is there too by item
    (3) of Proposition \ref{prop:removing-cliques}. Say $x\in C$ and
    $y\in D$ for some $C,D\in \mathcal C$. Then $\lambda(v_C) =
    \lambda(v_D)$ by item (4) of Proposition
    \ref{prop:removing-cliques}. By induction and the definition of a depth function, $\kappa(x) = \kappa(v_C) +1 = \kappa(v_D) +1 = \kappa(y)$.
\end{proof}

\begin{lemma}\label{lemma:kappa-edge-step}
    For all edges $e =\{u,v\}\in E(G)$, either $\lbrace \kappa(u),\kappa(v)\rbrace = \lbrace \kappa (e)\rbrace$ or $\lbrace \kappa(u),\kappa(v)\rbrace = \lbrace \kappa (e), \kappa (e)-1\rbrace$.
\end{lemma}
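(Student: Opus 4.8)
The plan is to prove Lemma~\ref{lemma:kappa-edge-step} by induction on the number of maximal cliques, following the recursive structure of the depth function from Definition~\ref{def:depth-function}. The base case is immediate: if $G$ has a single maximal clique then $\kappa$ is identically $1$, so for every edge $e=\{u,v\}$ we have $\kappa(u)=\kappa(v)=\kappa(e)=1$, which is the first alternative. For the inductive step, fix a collection $\mathcal{C}$ as in Proposition~\ref{prop:removing-cliques} witnessing the recursion, and split into cases according to where the edge $e=\{u,v\}$ lives relative to $G\setminus\mathcal{C}$.

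\begin{proof}
We argue by induction on the number of maximal cliques of $G$. If $G$
has only one maximal clique, then $\kappa$ is the constant function $1$,
so for every edge $e=\{u,v\}$ we have $\kappa(u)=\kappa(v)=\kappa(e)=1$
and the first alternative holds.

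Now suppose $G$ has at least two maximal cliques, and fix a collection
$\mathcal{C}$ as in Proposition~\ref{prop:removing-cliques} witnessing
the depth function, so that $\kappa$ restricts to a depth function of
$G\setminus\mathcal{C}$ and $\kappa(x)=\kappa_{G\setminus\mathcal{C}}(v_C)+1$
for every $C\in\mathcal{C}$ and every $x\in(V(C)\setminus\{v_C\})\cup E(C)$.
Let $e=\{u,v\}\in E(G)$. There are three cases.

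First, if $e\in E(C)$ for some $C\in\mathcal{C}$, then by the defining
property of the depth function $\kappa(e)=\kappa(v_C)+1$. At most one of
$u,v$ equals $v_C$, since $e$ is a single edge. If neither endpoint is
$v_C$, then both $u,v\in V(C)\setminus\{v_C\}$, so
$\kappa(u)=\kappa(v)=\kappa(v_C)+1=\kappa(e)$ and the first alternative
holds. If one endpoint, say $v$, equals $v_C$, then
$\kappa(u)=\kappa(v_C)+1=\kappa(e)$ while $\kappa(v)=\kappa(v_C)=\kappa(e)-1$,
so $\{\kappa(u),\kappa(v)\}=\{\kappa(e),\kappa(e)-1\}$, the second
alternative. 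Second, if $e\in E(G\setminus\mathcal{C})$, then both
endpoints $u,v$ lie in $V(G\setminus\mathcal{C})$ and $\kappa$ agrees
with $\kappa_{G\setminus\mathcal{C}}$ on $\{u,v,e\}$; the claim then
follows directly from the inductive hypothesis applied to the edge $e$
in $G\setminus\mathcal{C}$.

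It remains to check that these two cases are exhaustive, which is the
only real subtlety. An edge $e=\{u,v\}$ of $G$ either lies inside some
clique of $\mathcal{C}$ or it does not. If it lies in some $C\in\mathcal{C}$
we are in the first case. If it does not, then $e$ is an edge of some
maximal clique $C'\notin\mathcal{C}$; neither endpoint of $e$ can be a
vertex belonging \emph{only} to cliques of $\mathcal{C}$, for such a
vertex has all its neighbors inside those cliques and hence is incident
to no edge outside them. Therefore both endpoints survive in
$G\setminus\mathcal{C}$ and $e$ is an edge of $G\setminus\mathcal{C}$,
placing us in the second case. This completes the induction.
\end{proof}

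The main obstacle I expect is precisely the bookkeeping in this last paragraph: one must confirm that the recursive deletion of $\mathcal{C}$ interacts cleanly with edges, i.e.\ that every edge of $G$ is either internal to a deleted clique or entirely retained, with no edge straddling the boundary except through the shared vertex $v_C$. The key structural fact making this work is item~(4) of Proposition~\ref{prop:removing-cliques}, which guarantees that $v_C$ is the only vertex of $C$ lying in more than one maximal clique, so that the only edges of $G\setminus\mathcal{C}$ meeting $V(C)$ do so at $v_C$. Once that is in hand, the arithmetic $\kappa(e)=\kappa(v_C)+1$ versus $\kappa(v_C)$ for the two endpoint types is entirely routine.
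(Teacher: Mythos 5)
Your proof is correct and takes essentially the same approach as the paper's: induction on the number of maximal cliques, with the base case of constant $\kappa$ and the same case split according to whether $e$ lies in $G\setminus\mathcal{C}$ or in some $C\in\mathcal{C}$ (and, in the latter case, whether $e$ contains $v_C$). The only difference is that you make explicit the exhaustiveness of these cases, via item (4) of Proposition~\ref{prop:removing-cliques}, which the paper leaves implicit.
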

\begin{proof}
    We proceed by induction on the number of maximal cliques. If there
    is only one maximal clique, then $\kappa$ is constant, hence the
    claim follows immediately. Otherwise, if $e\in G\setminus\mathcal
    C$, then the claim follows by induction. If $e\in E(C)$ for some
    $C\in \mathcal C$, either $e$ does not contain $v_C$, and then we
    are in the first case, or it contains $v_C$, and then we are in
    the second case. 
\end{proof}

\begin{lemma}\label{lemma:up-and-down}
    For vertices $u,v\in V(G)$, let $(u= u_0,e_1,u_1,\dots ,e_n,u_n=v)$ be the shortest path. Then there exist a unique $m\in \lbrace0,1,\dots, n\rbrace$ such that
    \begin{itemize}
        \item $\kappa(u_{i+1}) = \kappa(u_i)-1$ for all $i< m$,
        \item $\kappa(u_{m+1}) = \kappa(u_m)$ or $\kappa(u_{m+1}) = \kappa(u_m) +1$,
        \item $\kappa(u_{i+1}) = \kappa(u_i)+1$ for all $i> m$,
    \end{itemize}
\end{lemma}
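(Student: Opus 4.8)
The plan is to separate the two assertions of the lemma: prove \emph{uniqueness} of $m$ cheaply and in full generality, and then prove \emph{existence} by induction on the number of maximal cliques, in the style of Lemmas~\ref{lemma:lambda-to-kappa} and~\ref{lemma:kappa-edge-step}. For uniqueness, suppose $m_1<m_2$ both satisfied the three conditions. The step from $u_{m_1}$ to $u_{m_1+1}$ exists because $m_1<m_2\le n$; the conditions attached to $m_2$ force it to be a strict descent $\kappa(u_{m_1+1})=\kappa(u_{m_1})-1$ (as $m_1<m_2$), whereas the transition condition attached to $m_1$ forces $\kappa(u_{m_1+1})\in\{\kappa(u_{m_1}),\kappa(u_{m_1})+1\}$, a contradiction. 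Hence at most one $m$ works, and it remains only to produce one.

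For existence, I first dispose of the case where $u,v$ lie in a common maximal clique (which in particular covers the base case of a graph with a single clique). If $u=v$ then $n=0$ and $m=0$ works; otherwise $n=1$, and the single edge $e=\{u,v\}$ is governed by Lemma~\ref{lemma:kappa-edge-step}: if $\kappa(u)=\kappa(v)$ take $m=0$, and if the depths differ by one take $m=1$ when the step descends and $m=0$ when it ascends. In each subcase one checks directly that the chosen $m$ satisfies the three conditions.

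The inductive step assumes $G$ has at least two maximal cliques and that $u,v$ share no clique (so $n\ge2$); I then fix a collection $\mathcal C$ and $G':=G\setminus\mathcal C$ as in Proposition~\ref{prop:removing-cliques}, so $\kappa$ restricts to a depth function of $G'$. For a vertex $w$ set $\bar w:=v_C$ if $w\in V^\star(C)$ for some $C\in\mathcal C$ and $\bar w:=w$ otherwise; then $\bar w\in V(G')$, and in the first case $\kappa(w)=\kappa(\bar w)+1$ by Definition~\ref{def:depth-function}. The crux is the claim that $u\leftrightarrow v$ decomposes as an optional \emph{descending} first edge $u\to\bar u$ (present exactly when $u\in V^\star(C)$), followed by the shortest path $\bar u\leftrightarrow\bar v$, followed by an optional \emph{ascending} last edge $\bar v\to v$. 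Indeed, when $u\in V^\star(C)$ every neighbour of $u$ lies in $C$ while $v\notin V(C)$, so every $u$--$v$ path passes through $v_C$; applying Corollary~\ref{Cor:shortest-path-criterion} to $\{u,v_C\}\in E(C)$ and the first edge of $v_C\leftrightarrow v$ (which leaves $C$, hence lies in a different clique) shows the concatenation is the shortest path, and symmetrically at $v$. Finally, $\bar u\leftrightarrow\bar v$ cannot enter any $V^\star(C)$ without repeating its cut vertex $v_C$, so it stays inside the block graph $G'$ and coincides with the shortest path of $\bar u,\bar v$ there.

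With the decomposition in hand the conclusion is routine: by induction the middle sub-path $\bar u\leftrightarrow\bar v$ has the asserted shape with some transition index $m'$ (computed with $\kappa|_{G'}=\kappa$), the prepended edge is a strict descent and the appended edge a strict ascent, so they merely lengthen the descending and ascending parts, giving the shape for $u\leftrightarrow v$ with $m=m'+a$, where $a\in\{0,1\}$ records whether a descending edge was prepended; the degenerate situations ($\bar u=\bar v$, giving a length-two down--up path, and the cases where only one optional edge is present) are handled by the same formula. I expect the main obstacle to be precisely this shortest-path decomposition through the cut vertices, namely verifying rigorously that $u\leftrightarrow v$ is the stated concatenation and that its middle piece lives in $G'$; the index bookkeeping at the two junctions is then only a careful but mechanical check.
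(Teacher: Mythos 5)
Your proof is correct and takes essentially the same route as the paper's: induction on the number of maximal cliques via Proposition~\ref{prop:removing-cliques}, with the shortest path $u\leftrightarrow v$ decomposed at the cut vertices into an optional descending first edge, a path inside $G\setminus\mathcal{C}$ handled by the induction hypothesis, and an optional ascending last edge. The only differences are presentational: you make the uniqueness of $m$ explicit and justify the decomposition through the cut-vertex argument and Corollary~\ref{Cor:shortest-path-criterion}, whereas the paper leaves uniqueness implicit and cites Lemma~\ref{lemma:path-clique-edge} directly to conclude $v_C=u_1$ and $v_D=u_{n-1}$.
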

\begin{proof}
    We proceed by induction on the number of maximal cliques. If $G$ is just one maximal clique, $\kappa$ is constant and shortest paths are formed only by one edge or one vertex. Hence, the claim follows immediately.
    
    If $u$ and $v$ are in $G\setminus\mathcal C$, the claim follows by
    induction. If they are in the same clique $C\in \mathcal C$ and
    not in $G\setminus C$, the claim is analogous to the base case.
    Assume $u\in V^\star (C)$ and $v\in V^\star(D)$ for some
    $C,D\in\mathcal C$, $C\neq D$. By induction, the statement is true
    for $v_C$ and $v_D$. By Lemma \ref{lemma:path-clique-edge}, $v_C =
    u_1$ and $v_D = u_{n-1}$. Since $\kappa(u) = \kappa(v_C)+1$ and
    $\kappa(v) = \kappa(v_D)+1$, the claim follows. If $u$ is in
    $G\setminus\mathcal C$ and $v\in V^\star(D)$ for some
    $D\in\mathcal C$, we apply induction to $u$ and $v_D$. Then again
    $v_D = u_{n-1}$ and $\kappa(v) = \kappa(v_D)+1$. A similar
    argument applies when $u \in V^\star(C)$ for some $C \in \cC$ and
    $v$ is in $G \setminus \cC$.
\end{proof}

\begin{definition}
    We say that $\alpha:G\to G$ is a \emph{quasi-automorphism} if it is an automorphism of the underlying uncolored graph and for each vertex $v\in G$, $\lambda(\alpha(v)) = \lambda(v)$.
\end{definition}

In other words, a quasi-automorphism is an automorphism that respects
the vertex coloring but may ignore the edge coloring.  In Lemma
\ref{lemma:moving-paths}, however, we will show that there are
quasi-automorphisms that preserve the color of certain specific edges.

\begin{lemma}\label{lemma:quasi-automorphism}
    Consider $G$ and $\mathcal{C}$ as in Proposition \ref{prop:removing-cliques} and a quasi-automorphism $\Tilde{\alpha}$ of $G\setminus\mathcal{C}$. Then there exist a quasi-automorphism $\alpha$ of $G$ such that $\alpha\vert_{G\setminus\mathcal{C}} = \Tilde{\alpha}$.
\end{lemma}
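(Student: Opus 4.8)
The plan is to construct $\alpha$ explicitly: keep $\tilde\alpha$ on the common part $G\setminus\mathcal{C}$ and extend it across each removed clique by a suitable color-preserving bijection, and then check that the resulting map is a color-preserving automorphism of the underlying graph. First I would record the decomposition $V(G) = V(G\setminus\mathcal{C})\sqcup\bigsqcup_{C\in\mathcal{C}}V^\star(C)$. This is a disjoint union because, by the proof of Proposition~\ref{prop:removing-cliques}, every vertex of $V^\star(C)$ lies in the unique maximal clique $C$ and is deleted in passing to $G\setminus\mathcal{C}$ (which is the induced subgraph on the surviving vertices). The same fact shows that all edges incident to a vertex of $V^\star(C)$ are edges of $C$, so every edge of $G$ lies either inside $G\setminus\mathcal{C}$ or inside some single $C\in\mathcal{C}$; this edge classification is what the verification will rest on.

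Next I would set $\mu:=\lambda(v_C)$, the common color of all attachment vertices guaranteed by item (4), and $S:=\{v_C\mid C\in\mathcal{C}\}$. By item (4) these are the only vertices of color $\mu$ in $G$, and they all survive in $G\setminus\mathcal{C}$; since $\tilde\alpha$ preserves vertex colors, it permutes $S$. Writing $\mathcal{C}_v:=\{C\in\mathcal{C}\mid v_C=v\}$ for $v\in S$ gives a partition $\mathcal{C}=\bigsqcup_{v\in S}\mathcal{C}_v$. The crucial input is item (6): as $\tilde\alpha(v)$ again has color $\mu$ it is an attachment vertex, and item (6) yields $|\mathcal{C}_v|=|\mathcal{C}_{\tilde\alpha(v)}|$. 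I can therefore choose, for each $v\in S$, a bijection $\mathcal{C}_v\to\mathcal{C}_{\tilde\alpha(v)}$ and assemble these into a bijection $\Phi\colon\mathcal{C}\to\mathcal{C}$ satisfying $v_{\Phi(C)}=\tilde\alpha(v_C)$ for all $C$. For each $C$, item (2) says $C$ and $\Phi(C)$ carry the same multiset of colored vertices; removing the attachment vertex (the unique vertex of color $\mu$ in each) leaves $V^\star(C)$ and $V^\star(\Phi(C))$ with equal multisets of colors, so I can pick a color-preserving bijection $\beta_C\colon V^\star(C)\to V^\star(\Phi(C))$.

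Now I would define $\alpha$ by $\alpha|_{V(G\setminus\mathcal{C})}=\tilde\alpha$ and $\alpha|_{V^\star(C)}=\beta_C$. It is a color-preserving bijection: $\tilde\alpha$ bijects $V(G\setminus\mathcal{C})$, and since $\Phi$ is a bijection of $\mathcal{C}$ the $\beta_C$ assemble into a bijection $\bigsqcup_C V^\star(C)\to\bigsqcup_{C'}V^\star(C')$. To see it is a graph automorphism I would invoke the edge classification: edges inside $G\setminus\mathcal{C}$ are sent to edges by $\tilde\alpha$; for an edge inside $C$, if both endpoints lie in $V^\star(C)$ their images lie in the clique $\Phi(C)$, while if one endpoint is $v_C$ its image is $\alpha(v_C)=\tilde\alpha(v_C)=v_{\Phi(C)}\in\Phi(C)$ and the other image lies in $V^\star(\Phi(C))$, so again the image pair lies in the clique $\Phi(C)$. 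Thus $\alpha$ maps every edge of $G$ to an edge; being a bijection of the finite vertex set, it then maps $E(G)$ bijectively onto $E(G)$, hence is an automorphism of the uncolored graph. Together with color preservation this makes $\alpha$ a quasi-automorphism, and by construction $\alpha|_{G\setminus\mathcal{C}}=\tilde\alpha$.

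The step I expect to be the main obstacle is the bookkeeping when several cliques of $\mathcal{C}$ are glued at one attachment vertex: one must match them up under $\tilde\alpha$ consistently with the vertices $v_C$, and this is precisely what item (6) of Proposition~\ref{prop:removing-cliques} supplies, by equating the number of cliques at attachment vertices of equal color. Everything else reduces to the routine check that the glued map respects the clique structure.
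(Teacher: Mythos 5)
Your proof is correct, and it rests on exactly the same ingredients as the paper's --- items (2), (4) and (6) of Proposition~\ref{prop:removing-cliques} --- but packages them quite differently. The paper runs a maximality argument: it picks a maximal set $U \subseteq \bigcup_{C\in\mathcal{C}} V^\star(C)$ to which $\tilde{\alpha}$ extends as a quasi-automorphism of the induced subgraph, and derives a contradiction if $U$ misses some vertex, splitting into two cases according to whether the clique containing that vertex already meets the domain: item (6) supplies a fresh clique attached at $\tilde{\alpha}(v_C)$, and item (2) supplies an unused vertex of the right color inside it. You instead build the extension in one shot: item (6) gives a bijection $\Phi$ of $\mathcal{C}$ lying over the permutation that $\tilde{\alpha}$ induces on the attachment vertices, item (2) gives color-preserving bijections $V^\star(C) \to V^\star(\Phi(C))$, and gluing finishes. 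Your route makes explicit two things the paper's incremental argument leaves implicit: the global clique-matching $\Phi$ (which the paper in effect constructs one vertex at a time), and the verification, via your edge classification, that the glued map is genuinely an automorphism of the underlying uncolored graph. One shared caveat: both arguments need that every attachment vertex $v_C$ survives in $G\setminus\mathcal{C}$, i.e., that the removed vertices are precisely $\bigcup_{C\in\mathcal{C}} V^\star(C)$; this is the reading of ``removing'' that the rest of the paper (the depth function, and the paper's own proof of this lemma) also requires, even though the literal phrase ``vertices contained only in cliques in $\mathcal{C}$'' would in degenerate cases delete a $v_C$ shared exclusively by cliques of $\mathcal{C}$ --- so your use of this decomposition is consistent with the paper rather than a gap.
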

\begin{proof}
    Consider a maximal set of vertices $U\subset \bigcup_{C \in \cC}
    V^\star(C)$ such that there is quasi-automorphism $\alpha$ of the
    induced graph on $V(G\setminus \mathcal{C})\cup U$ extending
    $\Tilde{\alpha}$. We show that $U = \bigcup _{C \in \cC}
    V^\star(C)$. 

    For a contradiction, assume there is a vertex $v\in
    V^\star(C)\setminus U$ for some $C \in \cC$. We consider two cases.

    Assume that $V(C)\cap U = \lbrace v_C\rbrace$. We know that
    $\lambda(\alpha(v_C)) = \lambda(v_C)$ and $\alpha$ is an
    automorphism of the underlying uncolored graph of $G \setminus
    \cC$. So by item (6) of Proposition \ref{prop:removing-cliques},
    there must be a clique $D\in \mathcal C$ containing $\alpha(v_C)$
    such that $\alpha(U)\cap V(D) = \lbrace v_D\rbrace$. By item (2)
    of Proposition \ref{prop:removing-cliques}, there is a vertex $v'
    \in V^\star(D)$ of color $\lambda(v)$, so we can extend $\alpha$ to $\alpha'$ by setting $\alpha'(v) = v'$.

    Assume that $V(C)\cap U \supsetneq \lbrace v_C\rbrace$. Then
    $\alpha$ maps $V(C)\cap U$ into a fixed clique $D\in \mathcal{C}$. By
    item (2) of Proposition \ref{prop:removing-cliques}, there is a
    vertex $v' \in V^\star(D)$ such that $\lambda(v') = \lambda(v)$ and $v'\not\in \text{Im}\,\alpha$. So we can extend $\alpha$ to $\alpha'$ by setting $\alpha'(v) = v'$.
\end{proof}

\begin{lemma}\label{lemma:local-q-a}
    Assume that we are in the situation of Proposition
    \ref{prop:removing-cliques}. Let $C,D \in \cC$ with $v_C=v_D$.
    Then for any distinct $u_1,\dots u_k\in V(C)$ and distinct
    $v_1,\dots v_k\in V(D)$ such that $\lambda(u_i) = \lambda(v_i)$ for all
    $i\in \lbrace 1,\dots ,k\rbrace$, there is a
    quasi-automorphism $\beta$ of $G$ such that
    $\beta\vert_{G\setminus (C\cup D)} = \text{id}$ and $\beta(u_i) =
    v_i$ for all $i\in \lbrace 1,\dots ,k\rbrace$. In particular,
    there is a quasi-automorphism sending an edge $e$ in $C$
    to any prescribed edge of color $\lambda(e)$ in 
    $D$.
\end{lemma}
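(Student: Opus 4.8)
The plan is to prove Lemma~\ref{lemma:local-q-a} in two stages: first construct the bijection $\beta$ explicitly, then verify it is a quasi-automorphism. Since $v_C = v_D$, the cliques $C$ and $D$ both contain this common vertex, and by item (4) of Proposition~\ref{prop:removing-cliques} it is the \emph{only} vertex each shares with other maximal cliques. Thus every vertex in $V^\star(C) \cup V^\star(D)$ lies in a unique maximal clique, so the only edges of $G$ incident to such vertices are the internal edges of $C$ and $D$ respectively. This ``locality'' is the key structural fact that makes the whole argument work. I would first define $\beta$ to be the identity on $V(G) \setminus (V^\star(C) \cup V^\star(D))$, and on the remaining vertices to be the transposition-like map sending each $u_i \mapsto v_i$ and $v_i \mapsto u_i$ (extended suitably on any vertices of $C$ or $D$ not among the $u_i, v_i$).

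The cleanest construction uses item (2) of Proposition~\ref{prop:removing-cliques}: since $C$ and $D$ have the same multiset of colored vertices, and both fix $v_C = v_D$, there is a color-preserving bijection $V(C) \to V(D)$ fixing $v_C$; composing with its inverse I can arrange a color-preserving involution swapping $V^\star(C)$ with $V^\star(D)$ and sending each prescribed $u_i$ to $v_i$. Concretely, I would first extend the partial matching $\{u_i \mapsto v_i\}$ to a color-preserving bijection $\phi: V^\star(C) \to V^\star(D)$ (possible because the color multisets agree and the $u_i$, $v_i$ are matched by color), then set $\beta$ to swap $x \leftrightarrow \phi(x)$ for $x \in V^\star(C)$ and fix everything else. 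By construction $\beta$ restricts to the identity on $G \setminus (C \cup D)$ and satisfies $\beta(u_i) = v_i$, and it clearly preserves vertex colors.

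It remains to check that $\beta$ is a graph automorphism of the underlying uncolored graph. The only edges that could be affected are those incident to $V^\star(C) \cup V^\star(D)$, and by the locality observation these are exactly the edges internal to $C$ and to $D$ together with the edges joining $v_C$ to vertices of $V^\star(C)$ or $V^\star(D)$. Since $C$ and $D$ are both complete graphs on the same number of vertices (item (2) gives equal vertex-color multisets, hence equal sizes), the swap $V^\star(C) \leftrightarrow V^\star(D)$ carries the edge set of $C$ onto that of $D$ and vice versa, while fixing $v_C = v_D$; every edge not touching $V^\star(C) \cup V^\star(D)$ is fixed. Hence $\beta$ preserves adjacency, so it is a quasi-automorphism. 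The final ``in particular'' sentence then follows by taking $k=2$, letting $u_1, u_2$ be the endpoints of the edge $e$ in $C$ and $v_1, v_2$ the endpoints of the prescribed edge of color $\lambda(e)$ in $D$; edge regularity guarantees these endpoint colors can be matched up.

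The step I expect to require the most care is confirming that the partial color-matching $\{u_i \mapsto v_i\}$ genuinely extends to a color-preserving bijection $V^\star(C) \to V^\star(D)$: one must check that after removing the matched pairs, the residual vertices of $V^\star(C)$ and $V^\star(D)$ still have identical color multisets, which follows from item (2) but should be stated explicitly. Beyond that, the adjacency-preservation check is routine given that $C$ and $D$ are cliques of equal size sharing only $v_C = v_D$; the locality of edges at $V^\star$-vertices, guaranteed by item (4), is what prevents $\beta$ from disturbing any edge outside $C \cup D$.
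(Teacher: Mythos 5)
Your construction is essentially the paper's own proof: use item (2) of Proposition~\ref{prop:removing-cliques} to extend the prescribed partial matching $\{u_i \mapsto v_i\}$ to a color-preserving bijection, and then use item (4) (every vertex of $V^\star(C) \cup V^\star(D)$ lies in a unique maximal clique, so all its edges are internal) together with the fact that $C$ and $D$ are cliques to see that the resulting map preserves adjacency. If anything, you are more careful than the paper's one-sentence justification, since you correctly insist that the bijection carry $V(C)$ onto $V(D)$ blockwise; a color-preserving bijection of $V(C) \cup V(D)$ that mixed the two cliques would \emph{not} be an automorphism, so this point deserves the emphasis you give it.

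There is, however, one genuine gap: the lemma is also invoked with $C = D$ (the paper remarks explicitly after the lemma that $C$ and $D$ need not be distinct), and your construction breaks in that case. When $C = D$, the extension $\phi$ of $\{u_i \mapsto v_i\}$ is a color-preserving permutation of $V^\star(C)$ which need not be an involution (e.g.\ $u_1 = a, v_1 = b$ and $u_2 = b, v_2 = c$ force $\phi(a) = b$, $\phi(b) = c$), and then ``swap $x \leftrightarrow \phi(x)$'' is not a well-defined map: $b$ would have to go simultaneously to $a$ and to $c$. The repair is immediate: when $C = D$, let $\beta$ agree with $\phi$ on $V^\star(C)$ and be the identity elsewhere; any color-preserving permutation of $V^\star(C)$ is a quasi-automorphism by the same locality argument. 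A second, smaller omission: if some $u_i$ equals $v_C$, it lies outside the domain $V^\star(C)$ of $\phi$, so you need to observe (as the paper does) that then necessarily $v_i = v_C$ as well---by item (4), $v_C = v_D$ is the unique vertex of its color in $V(C) \cup V(D)$---so that this pair is consistent with $\beta$ fixing $v_C$.
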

\begin{proof}
    By item (2) of Proposition \ref{prop:removing-cliques}, $C$ and $D$
    have the same number of vertices and the same multiset
    of colors of vertices. Moreover, if some $u_i$ is equal to $v_C$,
    then since $D$ contains only one vertex of color $\lambda(v_C)$,
    also the corresponding $v_i$ is equal to $v_C$. Since $C$ and $D$
    are cliques, any map that is the identity on $G \setminus (C \cup
    D)$ and that sends $u_i \mapsto v_i$
    and preserves vertex colors is indeed a quasi-automorphism. 

    The second statement follows, using edge regularity, from the
    first statement with $k=2$.
\end{proof}

Note that the cliques $C$ and $D$ in the previous lemma are not required to be distinct.

In what follows, when we say that two paths $P,Q$ in a colored graph $G$ are
isomorphic, then we mean that there is an isomorphism between $P$ and
$Q$ that preserves vertex colors {\em and} edge colors.

\begin{lemma}\label{lemma:moving-paths}
    In a connected, triangle-regular block graph $G$, let $P$ and $Q$ be
    any two isomorphic shortest paths, and let $P'$ be a shortest path
     containing $P$. Then there exists a quasi-automorphism
    $\alpha$ of $G$ such that $\alpha(P) = Q$ (and indeed, $\alpha$ can
    be chosen to agree with any of the isomorphisms between $P$ and $Q$,
    of which there may be two) and $\lambda(\alpha(e)) = \lambda(e)$
    for all edges $e$ in $P'$.
\end{lemma}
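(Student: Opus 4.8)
The plan is to induct on the number of maximal cliques of $G$, using Proposition~\ref{prop:removing-cliques} to peel off a bottom collection $\mathcal C$ and reduce to $G':=G\setminus\mathcal C$. For the base case $G$ is a single clique, so every shortest path has length at most one and every color-preserving bijection of $V(G)$ is a quasi-automorphism. Here I would first match the (at most two) vertices of $P$ to those of $Q$ along the prescribed isomorphism $\phi$ and then extend to a color-preserving bijection of the remaining vertices, which is possible because $V(P)$ and $V(Q)$ carry the same multiset of vertex colors, hence so do their complements. To control the color of the single edge of $P'$ I would use vertex- and edge-regularity: if $P'$ is an edge $e=\{v,w\}$ with $P=\{v\}$, then $\phi(v)$ is incident to an edge of color $\lambda(e)$, and choosing $\phi(w)$ to be its other endpoint forces $\lambda(\alpha(e))=\lambda(e)$; if $P=P'=e$ the requirement is automatic since $Q\cong P$.

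For the inductive step I would first record the two structural facts that drive the reduction. By Lemma~\ref{lemma:path-clique-edge} and Lemma~\ref{lemma:up-and-down}, every interior vertex of a shortest path lies in $G'$, while an endpoint lying in some $V^\star(C)$ is joined to the rest of the path through the apex $v_C$; moreover, by item~(5) of Proposition~\ref{prop:removing-cliques} the color of such a pendant edge $\{w,v_C\}$ depends only on $\lambda(w)$. Consequently, stripping from a shortest path $R$ any endpoints lying in removed cliques yields a shortest path $\hat R$ of $G'$ (or, in degenerate cases, a single apex or the empty path). Because $\phi$ preserves vertex colors, item~(3) of Proposition~\ref{prop:removing-cliques} forces $\phi$ to send $V^\star$-endpoints of $P$ to $V^\star$-endpoints of $Q$, so $\phi$ restricts to an isomorphism $\hat\phi\colon\hat P\to\hat Q$, and clearly $\hat P\subseteq\hat{P'}$.

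Next I would apply the induction hypothesis in $G'$ to the triple $\hat P\subseteq\hat{P'}$ and $\hat Q$, obtaining a quasi-automorphism $\tilde\alpha$ of $G'$ with $\tilde\alpha(\hat P)=\hat Q$ along $\hat\phi$ and $\lambda(\tilde\alpha(e))=\lambda(e)$ for every edge $e$ of $\hat{P'}$, and then extend it to a quasi-automorphism $\alpha_0$ of $G$ by Lemma~\ref{lemma:quasi-automorphism}. It remains to repair $\alpha_0$ inside the removed cliques: for each endpoint $w$ of $P$ in some $V^\star(C)$, the neighbor $v_C$ satisfies $\alpha_0(v_C)=v_D$, where $D$ is the removed clique containing the corresponding endpoint $q=\phi(w)$ of $Q$; since $\alpha_0(w)$ and $q$ both lie in $V^\star(D)$ and share the color $\lambda(w)$, Lemma~\ref{lemma:local-q-a} (applied with the single clique $D$) provides a quasi-automorphism $\beta$ fixing $G\setminus D$ pointwise with $\beta(\alpha_0(w))=q$. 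Post-composing $\alpha_0$ with these $\beta$'s (one per end, or one handling both ends when $P$ lies in a single clique) produces $\alpha$ with $\alpha(P)=Q$ along $\phi$. The corrections leave the core untouched, because each $\beta$ fixes every vertex of $G'$ and hence every edge of $\hat{P'}$; and the colors of the pendant edges of $P'$ are preserved \emph{automatically}, since by item~(5) the color of $\{\alpha(w),v_D\}$ depends only on $\lambda(\alpha(w))=\lambda(w)$.

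The step I expect to be the main obstacle is the degenerate situation in which $P'$ sinks entirely into the removed cliques, so that $\hat P$ and $\hat{P'}$ collapse to a single apex or to the empty path; this happens precisely when $P$ is a lone vertex of some $V^\star(C)$ or a single edge with both endpoints in $V^\star$. Then there is no nonempty core to feed the induction, and I would instead \emph{anchor} on the apex: apply the induction hypothesis in $G'$ to the singleton $\{v_C\}$ to move it to the apex $v_D$ of the clique that, by item~(3) for edges, must contain $Q$, preserving colors along $\hat{P'}$; then extend and use Lemma~\ref{lemma:local-q-a} with $k\le 2$ inside $D$ to place the one or two endpoints of $P$ correctly. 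The delicate points throughout are verifying that $Q$'s vertices (resp.\ edge) really land in a single removed clique with apex $\alpha_0(v_C)$---which is where items~(3) and~(4) of Proposition~\ref{prop:removing-cliques} are essential---and checking that the local repairs at the two ends, together with the extension of $\tilde\alpha$, remain mutually compatible and respect the chosen isomorphism $\phi$ (of which there may be two).
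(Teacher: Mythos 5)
Your proof is correct and follows essentially the same route as the paper's: induction on the number of maximal cliques via Proposition~\ref{prop:removing-cliques}, applying the induction hypothesis to the paths stripped of their $V^\star$-endpoints, extending by Lemma~\ref{lemma:quasi-automorphism}, repairing inside the removed cliques by Lemma~\ref{lemma:local-q-a}, and invoking item~(5) for the pendant edge colors---the paper merely organizes this into three explicit cases (path inside one removed clique, path through a shared apex, path meeting $G\setminus\mathcal{C}$) rather than your uniform stripping-plus-degenerate-case framework. One small correction: $\alpha_0(w)$ need not lie in $V^\star(D)$ itself, only in $V^\star(C'')$ for some $C''\in\mathcal{C}$ with $v_{C''}=v_D$, so Lemma~\ref{lemma:local-q-a} must be applied to the pair $(C'',D)$ of possibly distinct cliques sharing the apex $v_D$---which is precisely the situation that lemma covers, and exactly how the paper itself uses it (with $\alpha(C)$ and $D$).
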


\begin{proof}
    We proceed by induction on the number of maximal cliques; and we
    will write $\Tilde{P},\Tilde{Q},\Tilde{P}'$ for the paths to which
    we apply the induction hypothesis. There are three cases.

First, assume that $P$ is contained in a single $C \in
\mathcal{C}$. Then it has length $\leq 1$ by
Lemma~\ref{lemma:path-clique-edge}. 
By item (3) of Proposition \ref{prop:removing-cliques},
also $Q$ is contained in $D$ for some clique $D \in
\cC$. We apply induction to $\Tilde{P} = (v_C)$, $\Tilde{Q}
= (v_D)$ and $\Tilde{P'} = P'\cap (G\setminus \mathcal{C})$
to find a quasi-automorphism $\Tilde{\alpha}$ of $G \setminus
\mathcal{C}$ with $\Tilde{\alpha}(v_C)=v_D$ and
$\lambda(\Tilde{\alpha}(e))=\lambda(e)$ for all $e \in E(\Tilde{P}')$.
Then we apply Lemma \ref{lemma:quasi-automorphism} to extend
$\Tilde{\alpha}$ to a quasi-automorphism $\alpha$ of $G$ that maps $v_C$
to $v_D$. Further, applying Lemma~\ref{lemma:local-q-a} to enumerations of
$V^\star(\alpha(C))$ and $V^\star(D)$ starting with enumerations of the
vertices of $\alpha(P)$ and of $Q$, we find a quasi-automorphism $\beta$
such that $\gamma:=\beta \circ \alpha$ maps $C$ onto $D$ and $P$ onto
$Q$. This still satisfies $\lambda(\gamma(e))=\lambda(e)$ for all edges of
$P'$ in $G \setminus \cC$, but {\em a priori} it might not hold for the
remaining edges of $P'$. However, by Lemma~\ref{lemma:path-clique-edge},
any such remaining edge is of the form $\{v_{C'},v\}$ for some $C'
\in \cC$ and some $v \in V^\star(C')$, and its color is preserved by
$\gamma$ by item (5) of Proposition~\ref{prop:removing-cliques} and
the fact that $\gamma$ preserves vertex colors.

Second, assume that $P$ shares no edges with $G \setminus
\cC$ but is not contained in a single $C \in \cC$. Then using
Lemma \ref{lemma:path-clique-edge}, $P$ is of the form $P =
(v_0,e_1,v_C,e_2,v_2)$, where $e_1 \in E(C_1)$ and $e_2\in E(C_2)$ for
two maximal cliques $C_1,C_2 \in \mathcal{C}$ with $v_{C_1}=v_{C_2}$. Py
Proposition \ref{prop:removing-cliques}, the same is true for $Q$, so $Q=
(u_0,f_1,v_D,f_2,u_2)$. Note that $P$ is not contained in a strictly larger
shortest path, so $P'=P$. Using the induction hypothesis, there exists a
quasi-automorphism $\Tilde{\alpha}$ of $G \setminus \cC$ sending $v_C$ to
$v_D$. We can now extend this using Lemmas~\ref{lemma:quasi-automorphism}
and \ref{lemma:local-q-a} to send $P$ to $Q$.

Third, assume that $P$ does share edges with $G \setminus \cC$.  By item
(3) of Proposition \ref{prop:removing-cliques}, the same is true for $Q$,
and indeed $\Tilde{P} := P\cap (G\setminus\mathcal{C})$ and $\Tilde{Q}
:= Q\cap (G\setminus\mathcal{C})$ are isomorphic. We now apply the
induction hypothesis to $\Tilde{P}$ and $\Tilde{Q}$ and $\Tilde{P}':=
P' \cap (G \setminus \mathcal{C})$. The rest of the argument is similar
to the previous cases. 
\end{proof}

\begin{lemma}\label{lemma:symmetry}
    For any vertices $u,v\in G$ of the same color, $u\leftrightarrow
    v$ is palindromic.
\end{lemma}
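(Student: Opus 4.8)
The plan is to prove the statement by induction on the number of maximal cliques of $G$, running in parallel with the inductive construction behind Proposition~\ref{prop:removing-cliques}. First I would fix the meaning of \emph{palindromic}: writing the shortest path as $u \leftrightarrow v = (u_0,e_1,u_1,\dots,e_n,u_n)$ with $u_0=u$ and $u_n=v$, it is palindromic if $\lambda(u_i)=\lambda(u_{n-i})$ for all $i$ and $\lambda(e_i)=\lambda(e_{n+1-i})$ for all $i$. The case $u=v$ is trivial, and if $G$ is a single maximal clique then any shortest path between distinct vertices has length $1$, so the claim holds immediately; this is the base case. For the inductive step I assume $G$ has at least two maximal cliques and remove a collection $\mathcal C$ as in Proposition~\ref{prop:removing-cliques}, then split into cases according to whether the endpoints survive the removal.

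If both $u,v \in V(G\setminus\mathcal C)$, I would first argue that $u\leftrightarrow v$ avoids every deleted vertex. A deleted vertex $w\in V^\star(C)$ has all of its neighbours inside the clique $C$, so if $w$ were an interior vertex of the (induced, hence chordless) shortest path, its two path-neighbours would be adjacent inside $C$, yielding a strictly shorter path between the same endpoints, a contradiction. Hence $u\leftrightarrow v$ lies entirely in the induced subgraph $G\setminus\mathcal C$, and by uniqueness of shortest paths in block graphs it is exactly the shortest path between $u$ and $v$ there. Since the colouring is unchanged under restriction and $\lambda(u)=\lambda(v)$, the induction hypothesis applies and the path is palindromic.

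If instead one endpoint, say $u$, is deleted, i.e.\ $u\in V^\star(C)$ for some $C\in\mathcal C$, then by item (3) of Proposition~\ref{prop:removing-cliques} the other endpoint $v$ also lies in $\bigcup_{C'}V^\star(C')$, say $v\in V^\star(D)$, and by item (4) we have $\lambda(v_C)=\lambda(v_D)$. Using Lemma~\ref{lemma:path-clique-edge}, the path must leave $C$ through the unique separating vertex $v_C$ and enter $D$ through $v_D$, so $u_1=v_C$ and $u_{n-1}=v_D$, and the middle segment is precisely $v_C\leftrightarrow v_D$, a shortest path in $G\setminus\mathcal C$ between two vertices of equal colour, hence palindromic by induction. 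It remains to check the outermost layer: the endpoint colours agree by hypothesis, $\lambda(u)=\lambda(v)$, and the first and last edges satisfy $\lambda(\{u,v_C\})=\lambda(\{v,v_D\})$ by item (5) of Proposition~\ref{prop:removing-cliques}, which makes this colour depend only on $\lambda(u)=\lambda(v)$. Wrapping the palindromic middle segment in this symmetric outer layer produces a palindromic path; the degenerate subcases (both endpoints in one clique, or $v_C=v_D$) collapse to a single edge or a length-two path and are handled identically.

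The main obstacle I anticipate is bookkeeping rather than conceptual: correctly pinning down the decomposition $u \to v_C \to [\,v_C\leftrightarrow v_D\,] \to v_D \to v$, and verifying that in the surviving-endpoint case the path never dips into a deleted $V^\star(C)$. Both hinge on Lemma~\ref{lemma:path-clique-edge} together with the uniqueness and chordlessness of shortest paths, and on the fact that items (3)--(5) of Proposition~\ref{prop:removing-cliques} transfer exactly the colour data needed to match the two outer edges and the two endpoints. I note that the depth-function machinery (Lemmas~\ref{lemma:lambda-to-kappa}--\ref{lemma:up-and-down}) already forces the \emph{depth} profile of $u\leftrightarrow v$ to be symmetric once $\lambda(u)=\lambda(v)$; the real work here is to upgrade this to symmetry of the full colouring, which the structural induction supplies directly without invoking the quasi-automorphisms.
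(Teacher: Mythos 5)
Your proof is correct and follows essentially the same route as the paper's: induction on the number of maximal cliques, splitting according to whether the endpoints survive the removal of $\mathcal C$, applying the induction hypothesis to $v_C \leftrightarrow v_D$, and matching the outer layer via items (3)--(5) of Proposition~\ref{prop:removing-cliques}. The only difference is that you spell out details the paper leaves implicit (that the shortest path between surviving vertices avoids deleted vertices, and the degenerate cases), which is a harmless refinement of the same argument.
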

\begin{proof}
    We proceed by induction on the number of maximal cliques. If $G$
    is only one clique, then shortest paths consist only of one vertex
    or one edge, so the claim is clear.

    If $u\in V(G\setminus\mathcal C)$, then by item (3) of Proposition
    \ref{prop:removing-cliques}, also $v\in V(G\setminus\mathcal C)$.
    Then the claim follows by induction. If $u$ and $v$ are in the
    same clique of $\mathcal{C}$, the claim is again immediate. So we
    can assume $u\in C$ and $v\in D$ for some $C,D\in\mathcal C$,
    $C\neq D$, and $u,v\not\in V(G\setminus \mathcal C)$. By item (4)
    of Proposition \ref{prop:removing-cliques}, $\lambda(v_C) =
    \lambda(v_D)$, so we can apply induction to $v_C$ and $v_D$. The
    path $v_C\leftrightarrow v_D$ is a subpath of $u\leftrightarrow v$
    and $\lambda(\{u,v_C\}) = \lambda(\{v,v_D\})$ by (5) in Proposition \ref{prop:removing-cliques}, so the path
    $u\leftrightarrow v$ is indeed palindromic.
\end{proof}

We denote by $ \Lambda (u \leftrightarrow v)$ the multiset of edge and vertex colors contained in the shortest path $u\leftrightarrow v$. We say that two shortest paths $u\leftrightarrow v$ and $u'\leftrightarrow v'$ are \emph{combinatorially equivalent}, if $ \Lambda (u \leftrightarrow v) = \Lambda (u' \leftrightarrow v')$. 

\begin{lemma}\label{lemma:comb-equiv-to-iso}
    Any two combinatorially equivalent shortest paths in a triangle-regular block graph are isomorphic.
\end{lemma}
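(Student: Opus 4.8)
The plan is to prove this by induction on the number of maximal cliques, exactly mirroring the structure used in Lemmas~\ref{lemma:symmetry} and~\ref{lemma:moving-paths}. The statement to establish is that if two shortest paths $u \leftrightarrow v$ and $u' \leftrightarrow v'$ satisfy $\Lambda(u \leftrightarrow v) = \Lambda(u' \leftrightarrow v')$, then they are isomorphic as colored paths (preserving both vertex and edge colors). The base case is a single maximal clique: there every shortest path has length at most $1$, so combinatorial equivalence forces equal length, and a path of length $0$ (resp.\ $1$) is determined as a colored object by its vertex color (resp.\ by edge regularity, by its one edge color together with its endpoint colors), giving the isomorphism immediately.

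For the inductive step I would use the depth function $\kappa$ and the ``up-then-down'' structure of Lemma~\ref{lemma:up-and-down} as the organizing principle. The idea is that the depth profile of a shortest path, read off from $\kappa$ applied to its vertices and edges, is recoverable from the multiset $\Lambda$: by Lemma~\ref{lemma:lambda-to-kappa}, $\kappa$ is constant on each color class, so $\Lambda(u \leftrightarrow v)$ determines the multiset of $\kappa$-values along the path, and by Lemma~\ref{lemma:up-and-down} this multiset together with the single ``turning'' value at the apex $m$ pins down the shape of the depth profile. The maximum $\kappa$-value appearing in $\Lambda$ corresponds to the deepest cliques traversed, i.e.\ the cliques in $\mathcal{C}$. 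So both paths enter and exit the collection $\mathcal{C}$ the same number of times and with matching vertex/edge colors at each incidence. I would then argue that stripping the portions of each path lying in $\mathcal{C}$ (the edges $\{v_C, v\}$ with $v \in V^\star(C)$ and their pendant vertices) leaves two shorter shortest paths $\tilde P, \tilde Q$ in $G \setminus \mathcal{C}$ which are again combinatorially equivalent; by Lemma~\ref{lemma:path-clique-edge} these deep excursions occur only at the two ends or as single interior edges through a $v_C$, so the bookkeeping is controlled. Applying the induction hypothesis to $\tilde P, \tilde Q$ yields an isomorphism between them, and items (2), (4), (5) of Proposition~\ref{prop:removing-cliques} let me glue back the $\mathcal{C}$-portions: matching $\kappa$-values and matching edge colors force the stripped edges to have the same colored structure, since the color of an edge $\{v, v_C\}$ depends only on $\lambda(v)$ by item~(5).

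I would treat the same three cases as in Lemma~\ref{lemma:moving-paths}: the path contained in a single $C \in \mathcal{C}$, the path using no edges of $G \setminus \mathcal{C}$ but not inside one clique (forcing the form $(v_0, e_1, v_C, e_2, v_2)$), and the path sharing edges with $G \setminus \mathcal{C}$. In the first two cases the path has small length and is handled directly from Proposition~\ref{prop:removing-cliques}; in the third case combinatorial equivalence of the full paths descends to combinatorial equivalence of the truncations $\tilde P = P \cap (G \setminus \mathcal{C})$ and $\tilde Q = Q \cap (G \setminus \mathcal{C})$, which is where the induction does its work.

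The main obstacle I anticipate is showing rigorously that combinatorial equivalence of the full paths descends to combinatorial equivalence of the truncations, i.e.\ that the multiset $\Lambda$ splits cleanly into a ``$G \setminus \mathcal{C}$ part'' and a ``$\mathcal{C}$ part'' in a way that is forced rather than merely possible. The delicacy is that a color appearing in $\Lambda$ could a priori be realized either by an edge/vertex inside $\mathcal{C}$ or outside it; I must rule this out. This is exactly what items (3) and (4) of Proposition~\ref{prop:removing-cliques} guarantee: a color class lies entirely inside $\bigcup_{C \in \mathcal{C}} V^\star(C) \cup E(C)$ or entirely outside, so each color unambiguously contributes to one truncation or the other, and Lemma~\ref{lemma:up-and-down} guarantees that the deep excursions sit at the predicted positions along the path. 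Once this clean splitting is in place, the gluing is routine and the induction closes.
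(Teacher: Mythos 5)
Your skeleton matches the paper's own proof: induction on the number of maximal cliques, reduction of the nontrivial case (both endpoints in $V^\star$-sets of cliques in $\mathcal{C}$) to the truncated paths $v_C \leftrightarrow v_D$ and $v_{C'} \leftrightarrow v_{D'}$ in $G\setminus\mathcal{C}$, with the splitting of $\Lambda$ justified by item (3) of Proposition~\ref{prop:removing-cliques} and by Lemma~\ref{lemma:path-clique-edge} (which forces the $V^\star$-colored vertices of the second path to be its endpoints). However, your gluing step has a genuine gap. The induction hypothesis hands you an isomorphism of the truncations with a \emph{fixed} matching of endpoints, say $v_C \mapsto v_{C'}$ and $v_D \mapsto v_{D'}$. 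Combinatorial equivalence of the full paths only gives the multiset equality $\{\lambda(u),\lambda(v)\} = \{\lambda(u'),\lambda(v')\}$, so nothing you say excludes the anti-aligned situation $\lambda(u) = \lambda(v') \neq \lambda(u')$, in which case the pendant edge at $v_C$ cannot be attached along the given isomorphism. Item (5) cannot repair this: it only converts an equality of pendant \emph{vertex} colors into an equality of pendant \emph{edge} colors; it does not let you permute which endpoint carries which pendant. The paper closes exactly this hole by invoking Lemma~\ref{lemma:symmetry}: since $\lambda(v_C)=\lambda(v_D)$ by item (4), the truncation $v_C \leftrightarrow v_D$ is palindromic, so the isomorphism of truncations may be reversed, and one can assume $\lambda(u)=\lambda(u')$, $\lambda(v)=\lambda(v')$ before applying item (5). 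You name Lemma~\ref{lemma:symmetry} as a structural template at the outset but never use its content; without it the induction does not close.

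A secondary issue is your use of the depth function. The assertion that the maximum $\kappa$-value occurring in $\Lambda$ ``corresponds to the deepest cliques traversed, i.e.\ the cliques in $\mathcal{C}$'' is itself unproved: neither Lemma~\ref{lemma:lambda-to-kappa} nor Lemma~\ref{lemma:up-and-down} tells you that the cliques of $\mathcal{C}$ realize the maximal depth among the elements of the path, and the paper never establishes (or needs) such a statement --- its proof of this lemma does not mention $\kappa$ at all. Fortunately this claim is not load-bearing in your argument, since the color-class splitting you extract from items (3) and (4) of Proposition~\ref{prop:removing-cliques} already identifies unambiguously which part of $\Lambda$ comes from $\mathcal{C}$ and which from $G\setminus\mathcal{C}$. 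So the depth-function detour is redundant scaffolding containing an unjustified step, and should simply be deleted; the real missing ingredient is the palindromicity argument above.
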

\begin{proof}
    We proceed by induction on the number of maximal cliques. Let $u,v,u',v'\in V(G)$ be such that the paths $u\leftrightarrow v$ and $u'\leftrightarrow v'$ are combinatorially equivalent.

    The only non-trivial case is when $u\in V^\star(C)$ and $v\in
    V^\star(D)$ for some $C,D\in\mathcal C$, $C\neq D$. The path $u
    \leftrightarrow v$ does
    not contain any other vertices from $V^\star(C')$ for 
    $C'\in\mathcal C$, otherwise it would violate Lemma
    \ref{lemma:path-clique-edge} for $C'$. By item (3) of Proposition
    \ref{prop:removing-cliques}, the path $u\leftrightarrow v$
    contains exactly two vertices of colors appearing in
    $\lambda(V^\star (C))=\lambda(V^\star(D))$, namely $u$ and $v$. Since
    $u\leftrightarrow v$ is combinatorially equivalent to
    $u'\leftrightarrow v'$, the same must be true for
    $u'\leftrightarrow v'$. Moreover, those two vertices have to be
    $u'$ and $v'$, otherwise Lemma \ref{lemma:path-clique-edge} is
    violated. So $u'\in V^\star (C')$ and $v'\in V^\star (D')$ for
    some $C',D'\in \mathcal C$. Then we apply the induction hypothesis to
    $v_C\leftrightarrow v_D$ and $v_{C'}\leftrightarrow v_{D'}$. By
    item (4) of Proposition \ref{prop:removing-cliques}, $\lambda(v_C)
    = \lambda(v_D) = \lambda(v_{C'}) = \lambda(v_{D'})$. Applying
    Lemma \ref{lemma:symmetry}, we can assume that $\lambda(u) =
    \lambda(u')$ and $\lambda(v) = \lambda(v')$. Then by item (5) of
    Proposition \ref{prop:removing-cliques}, $\lambda(\{u,v_C\}) =
    \lambda(\{u',v_{C'}\})$ and $\lambda(\{v,v_D\}) =
    \lambda(\{v',v_{D'}\})$, so the claim follows.
\end{proof}

\section{Generators of the vanishing ideal} \label{sec:Gens}

The main idea of this section is to show that the vanishing ideal equals
the kernel of the map $\psi_G$ introduced in \cite{CMMPS23} for RCOP
block graphs. Throughout the section, we use the weaker assumption that
$G$ is a connected, triangle-regular block graph.

\begin{definition}
    The map $\psi_G$ is defined by 
    \begin{align*}
        \psi_G: \R[\Sigma] &\to \R[t_\ell\mid \ell \in \lambda(V(G)
	\cup E(G))]\\
        \sigma_{i,j}&\mapsto \prod_{v\in V(i\leftrightarrow
	j)}t_{\lambda(v)}  \prod_{e\in E(i\leftrightarrow j)}
	t_{\lambda(e)}.
    \end{align*}
\end{definition}

The kernel of this map is a binomial ideal. 

\subsection{Completion of a triangle-regular block graph}
We define the completion of a triangle-regular block graph the same way as \cite{CMMPS23} defined the completion of an RCOP block graph.
\begin{definition}
    Let $G$ be a triangle-regular block graph. The graph has vertex set $V$, edge set $E$ and coloring  $\lambda $. The  \emph{completion} of $G$, denoted by $\hat G$, is the complete graph on $V$ with the coloring $\hat{\lambda}$ defined by:
    \begin{itemize}
        \item $\hat{\lambda } (v) = \lambda (v) $ and $\hat{\lambda} (e) = \lambda (e) $ for all $v \in V$
        and $e \in E.$
        \item Edges $\{u,v\}$ that are not in $G$ have new colors assigned to them such that for \newline
        $\{u,v\}, \{u',v'\} \not \in E$ it holds $\hat{\lambda } (\{u,v\}) = \hat{\lambda } (\{u',v'\})$ iff $\Lambda (u \leftrightarrow v) = \Lambda (u'\leftrightarrow v').$ 
    \end{itemize}
\end{definition}

The following proposition is essentially similar to Lemma \ref{lemma:moving-paths}.

\begin{proposition}
    The completion of a triangle-regular block graph is triangle-regular.
\end{proposition}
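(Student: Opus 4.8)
The plan is to show that the completion $\hat G$ satisfies the three defining conditions of triangle regularity: vertex regularity, edge regularity, and edge triangle regularity. The guiding principle is that all combinatorial invariants of $\hat G$ at a vertex or edge are controlled by the multiset of colors on shortest paths in $G$, together with the quasi-automorphism transitivity established in Lemma~\ref{lemma:moving-paths}. Since the vertex set and vertex coloring of $\hat G$ agree with those of $G$, and the edge coloring of $\hat G$ refines the ``combinatorial equivalence class'' of shortest paths, it suffices to produce, for any two vertices of the same color (resp.\ any two edges of $\hat G$ of the same color), a color-preserving automorphism of $\hat G$ mapping one to the other. Such an automorphism automatically equates the incident multisets of colored edges and colored triangles, giving all three regularity conditions at once.

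First I would verify that every quasi-automorphism $\alpha$ of $G$ extends to an automorphism of $\hat G$ preserving $\hat\lambda$. This is the crux: by the definition of $\hat\lambda$, a non-edge $\{u,v\}$ of $G$ gets a color determined solely by $\Lambda(u \leftrightarrow v)$, so I must check that a quasi-automorphism preserves $\Lambda(u \leftrightarrow v)$ for \emph{every} pair $u,v$, not merely for edges. Since $\alpha$ preserves vertex colors and (being a graph automorphism) maps the shortest path $u \leftrightarrow v$ to the shortest path $\alpha(u) \leftrightarrow \alpha(v)$, it preserves the multiset of vertex colors along the path immediately; the content is that it also preserves the multiset of \emph{edge} colors along shortest paths. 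This is exactly what Lemma~\ref{lemma:moving-paths} guarantees when the quasi-automorphism is built to respect a prescribed shortest path, and more directly it follows from item~(5) of Proposition~\ref{prop:removing-cliques} together with Lemma~\ref{lemma:path-clique-edge}, which pin down each edge color on a shortest path from the vertex colors at its endpoints. I expect this to be the main obstacle: one must argue that edge colors along shortest paths are functions of the vertex-color data, so that a vertex-color-preserving automorphism is forced to preserve them.

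With that extension property in hand, the regularity conditions follow quickly. For vertex regularity of $\hat G$, take two vertices $u,u'$ with $\hat\lambda(u)=\lambda(u)=\lambda(u')=\hat\lambda(u')$; since $G$ is vertex-regular, Lemma~\ref{lemma:moving-paths} (applied with the trivial one-vertex paths $P=(u)$ and $Q=(u')$) yields a quasi-automorphism of $G$ sending $u$ to $u'$, which extends to an $\hat\lambda$-preserving automorphism of $\hat G$, equating the multisets of colored edges at $u$ and $u'$. For edge regularity and edge triangle regularity, take two edges $\{u,v\},\{u',v'\}$ of $\hat G$ with $\hat\lambda(\{u,v\})=\hat\lambda(\{u',v'\})$. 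By definition of $\hat\lambda$ this means $\Lambda(u\leftrightarrow v)=\Lambda(u'\leftrightarrow v')$, i.e., the shortest paths are combinatorially equivalent, hence isomorphic by Lemma~\ref{lemma:comb-equiv-to-iso}. Now $P = (u\leftrightarrow v)$ and $Q=(u'\leftrightarrow v')$ are isomorphic shortest paths in $G$, so Lemma~\ref{lemma:moving-paths} produces a quasi-automorphism $\alpha$ of $G$ with $\alpha(P)=Q$, in particular $\alpha(u)=u'$ and $\alpha(v)=v'$. Extending $\alpha$ to $\hat G$ gives an $\hat\lambda$-preserving automorphism carrying the edge $\{u,v\}$ to $\{u',v'\}$, and such an automorphism matches the multisets of incident colored vertices and of incident colored triangles.

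Finally I would assemble these pieces: the extension step guarantees a rich automorphism group acting transitively on each vertex-color class and each $\hat\lambda$-edge-color class of $\hat G$, and any statement of the form ``the colored local structure at $x$ depends only on $\hat\lambda(x)$'' follows from the existence of a color-preserving automorphism moving any $x$ to any $y$ with the same color. This simultaneously delivers vertex-regularity, edge-regularity, and edge triangle-regularity of $\hat G$, which is precisely triangle regularity. The only delicate point requiring care is ensuring the extension is well-defined on the new (non-$G$) edges, which is handled entirely by the $\Lambda$-invariance discussed above.
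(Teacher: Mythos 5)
Your argument has a fatal gap at its crux: the claim that every quasi-automorphism of $G$ (or even the specific one produced by Lemma~\ref{lemma:moving-paths}) extends to a $\hat\lambda$-preserving automorphism of $\hat G$. A quasi-automorphism preserves only \emph{vertex} colors, and edge colors along shortest paths are \emph{not} functions of the vertex-color data. Item~(5) of Proposition~\ref{prop:removing-cliques} pins down only the colors of edges incident to the cut vertices $v_C$ inside cliques of $\mathcal{C}$; it says nothing about the other edges, and in the base case of a single clique it says nothing at all. Lemma~\ref{lemma:moving-paths} preserves edge colors only along the one prescribed path $P'$, not globally. The Shrikhande coloring $G_S$ of $K_{16}$ (Example~\ref{ex:Shrikhande}) is a concrete counterexample to your claim: all vertices have color $0$, so \emph{every} permutation of the $16$ vertices is a quasi-automorphism, yet almost none preserve the edge colors; and since $G_S$ is already complete, $\hat G_S = G_S$, so ``extending to $\hat G$'' is exactly the (false) assertion that the quasi-automorphism preserves edge colors.

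Worse, the strategy cannot be repaired, because what you are trying to prove is strictly stronger than the proposition: if for any two same-colored vertices (resp.\ edges) of $\hat G$ there existed a $\hat\lambda$-preserving automorphism carrying one to the other, then $\hat G$ would be RCOP. The whole point of the paper's corollary is that triangle-regular graphs need not be RCOP; for $G_S$ the color-preserving automorphisms are exactly the automorphisms of the Shrikhande graph, which do not act transitively on its edges, so for some pairs of same-colored edges the automorphism you invoke simply does not exist. The paper's proof works with strictly weaker data: edge regularity of $\hat G$ follows directly from Lemma~\ref{lemma:comb-equiv-to-iso} (the incident vertex colors of an $\hat G$-edge are read off from the isomorphism type of the shortest path, no automorphism needed), while for vertex regularity and edge triangle regularity it proves, by induction on the number of maximal cliques, the existence of a quasi-automorphism $\alpha$ matching path-color data only \emph{relative to the distinguished vertices}, e.g.\ $\Lambda(u\leftrightarrow x)=\Lambda(v\leftrightarrow\alpha(x))$ for all $x$. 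Such an $\alpha$ need not preserve $\Lambda(x\leftrightarrow y)$ for arbitrary pairs $x,y$, which is precisely why it exists even when no genuine color-preserving automorphism does.
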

\begin{proof}
    \textit{Edge regularity:} This follows immediately from Lemma \ref{lemma:comb-equiv-to-iso}.
    
    \textit{Vertex regularity:} We prove the following slightly
    stronger statement: For any $u,v\in V(G)$ such that $\lambda(u) =
    \lambda(v)$, there is a quasi-automorphism $\alpha$ of $G$ such that for any vertex $x\in V(G)$, $\Lambda(u\leftrightarrow x) = \Lambda(v\leftrightarrow \alpha (x))$. We proceed by induction on the number of maximal cliques. 

    The base case is when the graph is a clique. Then it is already
    its own completion, and we define $\alpha$ by enumerating $V(G)$
    as $u=u_1,u_2,\ldots,u_k$ and as $v=v_1,v_2,\ldots,v_k$ in such a
    manner that $\lambda(\{u,u_i\})=\lambda(\{v,v_i\})$ for all $i$,
    and setting $\alpha(u_i):=v_i$. 

    If $G$ is not a clique, assume first that $u\in V^\star(C)$ for some $C
    \in \cC$. Then also $v\in V^\star(D)$ for some $D \in \cC$ by
    item (3) of Proposition \ref{prop:removing-cliques}. By
    induction, there exists a quasi-automorphism $\Tilde{\alpha}$ of
    $G\setminus\mathcal C$ such that for every $x\in V(G\setminus
    \mathcal{C})$, $\Lambda(v_C\leftrightarrow x) =
    \Lambda(v_D\leftrightarrow x)$. We extend $\Tilde{\alpha}$ by
    Lemma \ref{lemma:quasi-automorphism} to a quasi-automorphism
    $\alpha$ of $G$. Clearly, $\alpha(v_C) = v_D$. By Lemma
    \ref{lemma:local-q-a} and edge regularity, there exists
    quasi-automorphism $\beta$ such that $\beta\circ\alpha(u) = v$ and
    for every $x\in C$, $\lambda(\{v,\beta\circ\alpha(x)\}) =
    \lambda(\{u,x\})$. We claim that $\alpha\circ \beta$ is the desired quasi-automorphism: For $x\in V(C)$, the claim is true by construction. For $x\in V(G\setminus \mathcal{C})$, the claim is true by induction and item (5) of Proposition \ref{prop:removing-cliques}. For $x\in V(\mathcal C\setminus C)$, the claim is true by induction and item (5) of Proposition \ref{prop:removing-cliques} used twice.

    If $u$ is not in any $\bigcup_{C \in \mathcal C} V^\star(C)$, then neither
    is $v$. By induction, there is a quasi-automorphism $\Tilde{\alpha}$
    of $G \setminus \cC$ with the property that $\Lambda(u \leftrightarrow
    x)=\Lambda(v \leftrightarrow \alpha(x))$ for all $x \in V(G \setminus
    \cC)$. Extend $\Tilde{\alpha}$ to a quasi-automorphism $\alpha$ of
    $G$ using Lemma~\ref{lemma:quasi-automorphism}. This has the desired
    property, by the property of $\Tilde{\alpha}$ on the one hand and by
    item (5) of Proposition~\ref{prop:removing-cliques} on the other hand.

    \textit{Edge triangle regularity:} We proceed similarly to the vertex
    regularity. Let $(u_1,u_2)$ and $(v_1,v_2)$ be pairs of vertices such
    that $\Lambda(u_1\leftrightarrow u_2) = \Lambda(v_1\leftrightarrow
    v_2)$. Then the following cases need to be distinguished in the
    induction step: $u_1,u_2$ are both in $V(G \setminus \cC)$; exactly
    one of them is; $u_i \in V^\star(C_i)$ for $i=1,2$ with $C_1,C_2 \in
    \cC$ distinct; and $u_1,u_2 \in V^\star(C)$ for some $C \in \cC$. No
    new ideas are needed, except the use of triangle regularity of $C$
    in the last case.
\end{proof}

The following proposition uses the same idea as Theorem 5.5 in \cite{CMMPS23}. We see that the graph being triangle-regular is exactly the condition needed for their argument, and we do not need the existence of an automorphism in this proof. 

\begin{proposition}
\label{prop:completeGraphIsJordanAlgebra}
Let $\mathcal{G}$ be a complete triangle-regular block graph. Then
$\mathcal{L}_{\mathcal{G}}$ is a Jordan subalgebra of the space of
symmetric $V(G) \times V(G)$-matrices. 
\end{proposition}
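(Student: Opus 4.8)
The plan is to regard the space $\S^{V(\mathcal{G})}$ as a Jordan algebra under the symmetrized product $A \circ B := \tfrac12(AB+BA)$ (which indeed keeps us in $\S^{V(\mathcal{G})}$, since $(AB+BA)^{\mathsf T}=BA+AB$), and to show that $\cL_{\mathcal{G}}$ is a linear subspace closed under $\circ$. Because $\mathcal{G}$ is complete, its colors partition all of $V(\mathcal{G})\times V(\mathcal{G})$: the vertex colors partition the diagonal and the edge colors partition the off-diagonal pairs. Hence $\cL_{\mathcal{G}}$ has as a basis the family of symmetric $0/1$-matrices $\{A_c\}$, one for each color $c$, with $(A_c)_{xy}=1$ exactly when the pair $(x,y)$ carries color $c$; so $A_c$ is diagonal for a vertex color and has zero diagonal for an edge color. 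A symmetric matrix lies in $\cL_{\mathcal{G}}$ if and only if it is constant on each color class, and since $\circ$ is bilinear it suffices to prove that $A_c \circ A_{c'} \in \cL_{\mathcal{G}}$ for every pair of colors $c,c'$, i.e.\ that each entry of $A_c A_{c'}+A_{c'}A_c$ depends only on the color of the indexing pair.

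For a fixed off-diagonal pair $(x,y)$ I would compute $(A_c A_{c'})_{xy}=\sum_z (A_c)_{xz}(A_{c'})_{zy}$, splitting the sum according to whether $z=x$, $z=y$, or $z\notin\{x,y\}$. The interior terms $z\notin\{x,y\}$ count the vertices $z$ completing a triangle on the edge $\{x,y\}$ whose two other edges carry colors $c$ and $c'$; summed over $A_cA_{c'}+A_{c'}A_c$ this is precisely the number of triangles on $\{x,y\}$ whose remaining edge-color multiset is $\{c,c'\}$, which by edge triangle regularity depends only on $\lambda(\{x,y\})$. The boundary terms $z\in\{x,y\}$ force one factor to be a diagonal (vertex) entry, so they contribute indicator expressions in $\lambda(x)$, $\lambda(y)$ and $\lambda(\{x,y\})$; collecting the four such terms coming from $A_cA_{c'}$ and $A_{c'}A_c$, one checks that their total depends only on the multiset $\{\lambda(x),\lambda(y)\}$ together with $c,c'$, and by edge regularity this multiset is itself determined by $\lambda(\{x,y\})$. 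Thus every off-diagonal entry of $A_c\circ A_{c'}$ is a function of $\lambda(\{x,y\})$ alone.

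Next I would treat the diagonal entries, writing $(A_c A_{c'})_{xx}=\sum_z (A_c)_{xz}(A_{c'})_{xz}$ via symmetry of $A_{c'}$. The term $z=x$ is nonzero only when $c=c'=\lambda(x)$, while for $z\neq x$ the product is nonzero only when $c=c'$ equals the edge color $\lambda(\{x,z\})$, in which case the count is the number of edges of color $c$ incident to $x$. By vertex regularity both contributions depend only on $\lambda(x)$, so the diagonal of $A_c\circ A_{c'}$ is constant on each vertex-color class. Combining the two cases shows $A_c\circ A_{c'}\in\cL_{\mathcal{G}}$, and bilinearity then gives closure of $\cL_{\mathcal{G}}$ under $\circ$.

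The main obstacle is entirely one of bookkeeping rather than difficulty: because vertices and edges are colored separately here, in contrast with an association scheme whose diagonal is a single relation, the degenerate terms in which the summation index $z$ meets an endpoint must be carefully isolated and shown to be governed not by edge triangle regularity but by edge regularity and vertex regularity, respectively. The computation is otherwise routine; the two points to keep straight are that all three regularity hypotheses are genuinely invoked, and that completeness of $\mathcal{G}$ is exactly what guarantees there are no structural zeros that could obstruct closure.
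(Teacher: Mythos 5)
Your proposal is correct and follows essentially the same route as the paper: the paper reduces to showing $K^2\in\cL_{\mathcal{G}}$ for $K\in\cL_{\mathcal{G}}$ and then performs exactly your entrywise computation, with the diagonal entries handled by vertex regularity, the endpoint (boundary) terms of off-diagonal entries by edge regularity, and the triangle (interior) terms by edge triangle regularity. Your only cosmetic difference is linearizing via the color-class basis $\{A_c\}$ and the product $A\circ B=\tfrac12(AB+BA)$ rather than via squaring a general element, which is an equivalent reduction.
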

\begin{proof}
To show that $\mathcal{L}_{\mathcal{G}}$ is a Jordan subalgebra, it is
enough to show that given a matrix $K \in \mathcal{L}_{\mathcal{G}}$,
$K ^ 2$ is also in $\mathcal{L}_{\mathcal{G}}$. So we have to show
that for vertices $m,m'$ of the same color, we have $(K^2)_{mm} =
(K^2)_{m'm'}$ and for edges $\{i,j\},\{i',j'\}$ of the same color,
we have $(K^2)_{ij} = (K^2)_{i'j'}$.

For entries on the diagonal we have
\begin{align*}
(K^2)_{mm} = \sum _{l = 1}^n k_{ml}k_{lm} = (k_{mm})^2 + \sum
_{\{l,m\} \in E(G)} (k_{ml})^2 
\end{align*}
If $m,m'$ have the same color, then $k_{mm} = k_{m'm'}$. Similarly,
the term $k_{ml}$ only depends on the color of the edge $\{m,l\}$. Now
if $m,m'$ have the same color, using vertex regularity, they are adjacent to the same multiset
of edge colors, hence the sums $\sum _{l \neq m} (k_{ml})^2 = \sum _{l
\neq m'} (k_{m'l})^2 $ are equal.

For an edge $\{i,j\}$, we have:
\begin{align*}
(K^2)_{ij} = \sum _{l = 1}^n K_{il} K_{yj} =  k_{ii}k_{ij} + k_{ij}k_{jj} + \sum _{l \neq i, l \neq j} k_{il}k_{lj}.
\end{align*}
The first two terms are equal to the corresponding terms for
an edge $\{i',j'\}$ of the same color as $\{i,j\}$ using edge regularity. 
A term $k_{il}k_{lj}$ corresponds to a triangle with edges of color
$\lambda (\{i,l\}),\lambda (\{l,j\})$. The edge
$\{i',j'\}$ has the same color as $\{i,j\}$, so using edge triangle regularity, they are incident to
the same multiset of colored triangles. Therefore also $\sum _{l \neq
i, l \neq j} k_{il}k_{lj} = \sum _{l \neq i', l \neq j'}
k_{i'l}k_{lj'}$ and we find $ (K^2)_{ij} = (K^2)_{i'j'}$.
\end{proof}

Using the following theorem, we see that for complete triangle-regular graphs, $\mathcal{L}_G = \mathcal{L}^{-1}_G$.

\begin{theorem}{\cite[Lemma~1]{Jensen88}}
    Let $\mathcal{L}$ be a linear space of symmetric matrices that
    intersects the set of invertible matrices. Let $\mathcal{L}^{-1}$
    denote the Zariski closure of the set of all inverses of matrices
    in $\mathcal{L}$. Then $\mathcal{L}$ is a Jordan subalgebra of the
    Jordan algebra of symmetric matrices if and only if $\mathcal{L} = \mathcal{L}^{-1}$.
\end{theorem}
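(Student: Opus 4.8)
The plan is to reduce the Jordan condition to closure under squaring and then to read both implications off the Taylor expansion of matrix inversion. Recall first that a subspace $\mathcal{L}$ of symmetric matrices is a Jordan subalgebra, i.e.\ closed under $A \circ B = \tfrac12(AB+BA)$, if and only if it is closed under squaring $A \mapsto A^2$: one direction is trivial since $A \circ A = A^2$, and the other follows by polarization, $A \circ B = \tfrac12\big((A+B)^2 - A^2 - B^2\big)$. Since the ordinary matrix powers of a fixed $A$ commute, closure under squaring moreover propagates to all powers, because $A^{k+1} = A \circ A^{k}$, so that $A, A^2, A^3, \dots \in \mathcal{L}$ whenever $A \in \mathcal{L}$.

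For the implication ``$\mathcal{L}$ Jordan $\Rightarrow \mathcal{L} = \mathcal{L}^{-1}$'' I would fix an invertible $A \in \mathcal{L}$, which exists by hypothesis, and let $d$ be the degree of its minimal polynomial $A^{d} + c_{d-1}A^{d-1} + \cdots + c_1 A + c_0 I = 0$, noting $c_0 \neq 0$ since $A$ is invertible. Then $A,\dots,A^{d} \in \mathcal{L}$ by the previous paragraph, and $I = -c_0^{-1}\big(A^{d} + c_{d-1}A^{d-1} + \cdots + c_1 A\big) \in \mathcal{L}$; consequently $A^{-1} = -c_0^{-1}\big(A^{d-1} + \cdots + c_1 I\big) \in \mathcal{L}$ as well. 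Thus every invertible element of $\mathcal{L}$ has its inverse in $\mathcal{L}$, and taking Zariski closures gives $\mathcal{L}^{-1} \subseteq \mathcal{L}$. For the reverse inclusion I would use that inversion is an involution: any invertible $B \in \mathcal{L}$ is the inverse of $B^{-1} \in \mathcal{L}$, so $B$ itself lies in the set of inverses, and since the invertible elements are Zariski-dense in the irreducible space $\mathcal{L}$ we get $\mathcal{L} \subseteq \mathcal{L}^{-1}$. Together these yield equality.

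For the converse ``$\mathcal{L} = \mathcal{L}^{-1} \Rightarrow \mathcal{L}$ Jordan'' the natural device is to differentiate inversion. Because $\mathcal{L}$ is a Zariski-closed subspace equal to the closure of the inverses of its invertible elements, it actually contains $A^{-1}$ for every invertible $A \in \mathcal{L}$. Expanding around the identity, for $A \in \mathcal{L}$ and $|t|$ small the curve $t \mapsto (I + tA)^{-1}$ takes values in the closed subspace $\mathcal{L}$; its Taylor series at $t=0$ is $\sum_{k \geq 0} (-t)^k A^k$, so every Taylor coefficient $(-1)^k A^k$ lies in $\mathcal{L}$. Reading off the coefficient of $t^2$ gives $A^2 \in \mathcal{L}$, so $\mathcal{L}$ is closed under squaring and hence a Jordan subalgebra.

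The main obstacle is precisely the legitimacy of this last expansion: forming $I + tA$ requires $I \in \mathcal{L}$. This is harmless in the present application, since $\mathcal{L}_{\mathcal{G}}$ for a complete graph contains the identity, but it is genuinely needed for the converse. Anchoring the expansion at a generic invertible $A_0 \in \mathcal{L}$ instead yields only the weaker quadratic relation $A_0^{-1} B A_0^{-1} \in \mathcal{L}$, equivalently $A B A \in \mathcal{L}$ for all $A,B \in \mathcal{L}$, which does not by itself force closure under squaring when $\mathcal{L}$ avoids the identity. Securing the identity is thus the crux of the converse; the forward implication, by contrast, needs no such assumption, because closure under squaring together with the presence of one invertible element forces $I \in \mathcal{L}$ automatically, as shown above.
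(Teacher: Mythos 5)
The paper itself gives no proof of this statement---it is imported verbatim from Jensen---so there is nothing internal to compare against; what can be assessed is correctness. Your forward direction (Jordan $\Rightarrow \mathcal{L}=\mathcal{L}^{-1}$) is complete and correct: polarization reduces the Jordan property to closure under squaring, the minimal-polynomial computation legitimately produces $I\in\mathcal{L}$ and then $A^{-1}\in\mathcal{L}$ for every invertible $A\in\mathcal{L}$, and the Zariski-density argument gives both inclusions. Notably, this is the only direction the paper actually uses: Proposition~\ref{prop:completeGraphIsJordanAlgebra} establishes the Jordan property of $\mathcal{L}_{\hat G}$, and the conclusion $\mathcal{L}_{\hat G}=\mathcal{L}_{\hat G}^{-1}$ is what feeds into Corollary~\ref{cor:completion-eqs}.

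The converse you leave open, and the obstacle you flag is not a removable technicality: with only the stated hypothesis that $\mathcal{L}$ meets the invertible matrices, the implication $\mathcal{L}=\mathcal{L}^{-1}\Rightarrow$ Jordan is in fact \emph{false}, so no argument can close your gap. Take $\mathcal{L}=\R\cdot\operatorname{diag}(1,-1)\subset\S^2$: every invertible element $\operatorname{diag}(t,-t)$ has inverse $\operatorname{diag}(t^{-1},-t^{-1})\in\mathcal{L}$, so $\mathcal{L}^{-1}=\mathcal{L}$, yet $\operatorname{diag}(t,-t)^2=t^2 I\notin\mathcal{L}$, so $\mathcal{L}$ is not a Jordan subalgebra. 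As your differentiation-at-$A_0$ computation correctly shows, $\mathcal{L}=\mathcal{L}^{-1}$ only forces closure under the triple product $(A,B)\mapsto ABA$ (a Jordan triple system); upgrading this to the Jordan algebra property requires $I\in\mathcal{L}$, whereupon setting $B=I$ in the polarized identity $ABC+CBA\in\mathcal{L}$ finishes, exactly as your $(I+tA)^{-1}$ expansion does. A correct formulation of the lemma must therefore include the hypothesis $I\in\mathcal{L}$ (present in Jensen's setting, and dropped in the quotation above); the paper's application is unharmed, since every $\mathcal{L}_G$, in particular $\mathcal{L}_{\hat G}$, contains the identity (all diagonal entries $1$, all off-diagonal entries $0$ is compatible with any coloring). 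So your write-up proves the direction the paper needs and correctly isolates the crux of the other one; to finish, the right move is to restore the hypothesis $I\in\mathcal{L}$ rather than to search for a cleverer argument.
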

Therefore, we can conclude that the equations satisfying $I_{\hat{G}}$ are the ones satisfied by $K \in \mathcal{L}_{\hat{G}}$:
\begin{corollary} \label{cor:completion-eqs}
    Let $\hat{G}$ be the completion of a triangle-regular block graph
    $G$. Then the binomial $I_{\hat{G}}$ is generated by the linear
    binomials
    \[
    I_{\hat{G}} =\langle \sigma _{ij} - \sigma _{kl} \mid \Lambda (i \leftrightarrow j) = \Lambda (k \leftrightarrow l) \text{ in } G \rangle.
    \]
\end{corollary}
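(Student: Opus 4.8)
The plan is to deduce the corollary directly from Jensen's theorem \cite[Lemma~1]{Jensen88}, using as input the two facts established just above: that $\hat G$ is triangle-regular (the preceding proposition) and that $\cL_{\hat G}$ is therefore a Jordan subalgebra (Proposition~\ref{prop:completeGraphIsJordanAlgebra}). The whole point is that these inputs let us replace the a priori complicated variety $\cL_{\hat G}^{-1}$, on which $I_{\hat G}$ vanishes, by a \emph{linear} subspace, after which the structure of the ideal is forced.

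Concretely, I would proceed as follows. First, observe that $\hat G$ is a complete graph, hence trivially a block graph, and it is triangle-regular by the preceding proposition; so Proposition~\ref{prop:completeGraphIsJordanAlgebra} applies and $\cL_{\hat G}$ is a Jordan subalgebra of $\S^{V(\hat G)}$. Second, note that $\cL_{\hat G}$ contains the identity matrix $I$ (its diagonal is constant on color classes and its off-diagonal entries vanish), so $\cL_{\hat G}$ meets the invertible matrices and the hypothesis of Jensen's theorem is satisfied. Jensen's theorem then yields $\cL_{\hat G} = \cL_{\hat G}^{-1}$. Since by definition $I_{\hat G}$ is the ideal of all polynomials vanishing on $\cL_{\hat G}^{-1}$, we conclude that $I_{\hat G}$ is the vanishing ideal of the linear subspace $\cL_{\hat G}$. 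The vanishing ideal of a linear subspace defined over $\R$ is generated by the linear forms cutting it out (it is the prime ideal generated in degree one), so $I_{\hat G}$ is generated by the linear forms vanishing on $\cL_{\hat G}$.

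The only remaining, and genuinely most delicate, step is to identify those defining linear forms with the binomials in the statement. By definition of $\cL_{\hat G}$, a symmetric matrix $K$ lies in it exactly when $k_{ii}=k_{jj}$ whenever $\hat\lambda(i)=\hat\lambda(j)$ and $k_{ij}=k_{kl}$ whenever $\hat\lambda(\{i,j\})=\hat\lambda(\{k,l\})$; thus $\cL_{\hat G}$ is cut out by the linear forms $\sigma_{ij}-\sigma_{kl}$ indexed by pairs of equal $\hat\lambda$-color. I would then check that equality of $\hat\lambda$-colors coincides, uniformly over all pairs, with the condition $\Lambda(i\leftrightarrow j)=\Lambda(k\leftrightarrow l)$ in $G$. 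For non-edges of $G$ this is the defining property of the completion coloring $\hat\lambda$. For original edges $\{i,j\},\{k,l\}\in E(G)$ one must argue that $\lambda(\{i,j\})=\lambda(\{k,l\})$ is equivalent to $\Lambda(i\leftrightarrow j)=\Lambda(k\leftrightarrow l)$: the forward direction uses edge regularity (edges of the same color are incident to the same multiset of vertex colors, so the length-one shortest paths carry the same color multiset), and the reverse direction holds because a length-one path carries a unique edge color, while the disjointness of vertex and edge colors keeps this from being confused with the vertex part. Finally, the diagonal forms $\sigma_{ii}-\sigma_{jj}$ are recovered as the case $i=j,\ k=l$, since $\Lambda(i\leftrightarrow i)=\{\lambda(i)\}$ equals $\Lambda(j\leftrightarrow j)$ exactly when $\lambda(i)=\lambda(j)$; and no mixed diagonal/off-diagonal relation can occur, as $\Lambda$ of a trivial path contains no edge color while $\Lambda(i\leftrightarrow j)$ with $i\neq j$ contains at least one. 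Assembling these identifications shows that the defining linear forms of $\cL_{\hat G}$ are precisely $\{\sigma_{ij}-\sigma_{kl}\mid \Lambda(i\leftrightarrow j)=\Lambda(k\leftrightarrow l)\}$, which is the claimed generating set. The main obstacle here is not analytic but bookkeeping: one must treat original edges, new edges, and the diagonal on the same footing and verify the color-to-$\Lambda$ dictionary in each case.
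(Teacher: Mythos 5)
Your proposal is correct and follows essentially the same route as the paper: triangle-regularity of $\hat G$ plus Proposition~\ref{prop:completeGraphIsJordanAlgebra} make $\cL_{\hat G}$ a Jordan subalgebra, Jensen's theorem gives $\cL_{\hat G}=\cL_{\hat G}^{-1}$, so $I_{\hat G}$ is the vanishing ideal of the linear space $\cL_{\hat G}$ and hence generated by the defining linear binomials. Your final bookkeeping step (matching $\hat\lambda$-color classes with the condition $\Lambda(i\leftrightarrow j)=\Lambda(k\leftrightarrow l)$, using edge regularity for original edges and the definition of the completion for new edges) is exactly what the paper leaves implicit, and you verify it correctly.
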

We conclude that complete triangle-regular block graphs have binomial vanishing ideals. 

\subsection{Proof of the generators in Theorem~\ref{thm:CharToric}.}

Whenever $G$ is not a single clique, we will use the setting of
Proposition~\ref{prop:removing-cliques} and a depth function $\kappa$ as in
Definition~\ref{def:depth-function}.

\begin{definition}
Let $P,Q$ be shortest paths in $G$ that meet at a vertex $x$. Let
$P_1,P_2$ be the unique paths that both have $x$ as an endpoint,
satisfy $E(P)=E(P_1) \sqcup E(P_2)$ and where $P_1$ either has
length $0$ or else contains at least one vertex $x' \neq x$ with
$\kappa(x') \leq \kappa(x)$. We use Lemma~\ref{lemma:up-and-down}
to see that this decomposition exists and is unique; it has the
property that $P_2=(x=x_0,e_1,x_1,\ldots,e_k,x_k)$ with $k \geq 0$ and
$\kappa(x_{i+1})=\kappa(x_i)+1$ for all $i \in \{0,\ldots,k-1\}$. Decompose $Q$ in the same
manner into $Q_1$ and $Q_2$. It then follows from Corollary \ref{Cor:shortest-path-criterion} that the concatenation
$(P_1,Q_2)$ of $P_1$ and $Q_2$ is also a shortest path, and so is the
concatenation $(Q_1,P_2)$. We call $((P_1,Q_2),(Q_1,P_2))$ the {\em swap}
of $(P,Q)$ at $x$.
\end{definition}

\begin{theorem}\label{thm:sets-of-paths}
    Let $\mathcal A$ and $\mathcal B$ be two multisets of shortest paths in a triangle-regular block graph $G$ such that the multiset of colors of edges and the multiset of colors of vertices in $\mathcal{A}$ are the same as those in $\mathcal B$. We are allowed to modify those two sets by the following two moves:
    \begin{enumerate}
        \item Replace a path in $\cA$ by an isomorphic shortest path.
        \item Replace a pair $(P,Q)$ of shortest paths in $\cA$ that
	meet at some vertex $x$ by the swap $((P_1,Q_2),(Q_1,P_2))$ of
	$(P,Q)$ at $x$.
    \end{enumerate}
    Then there exists a finite sequence of moves which achieves $\mathcal A=\mathcal B$.
\end{theorem}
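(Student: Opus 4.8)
The plan is to induct on the number of maximal cliques of $G$, exactly as in Section~\ref{sec:CombProp}. First I observe that both moves are reversible: move~(1) is undone by another move~(1), and applying the swap at $x$ twice returns $(P,Q)$, because the strictly upward part at $x$ of $(P_1,Q_2)$ is again $Q_2$, so a second swap exchanges $Q_2$ and $P_2$ back. Hence ``connected by a finite sequence of moves'' is an equivalence relation, and it suffices to drive an arbitrary $\cA$ into a normal form depending only on the two color multisets. For the base case $G$ is a single clique: every shortest path then has length $\le 1$, and since $\kappa$ is constant the upward part in any swap has length $0$, so move~(2) is trivial and only move~(1) is available. By edge-regularity a length-$1$ path is determined up to isomorphism by its edge color; after canonicalizing these by move~(1) the consumed vertex colors are fixed by the edge-color multiset, and the leftover vertex colors are realized by length-$0$ paths, again canonicalized by move~(1). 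This yields a normal form determined by the color data.

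For the inductive step let $G$ have at least two maximal cliques, fix a collection $\cC$ and put $G':=G\setminus\cC$ as in Proposition~\ref{prop:removing-cliques}, with a depth function $\kappa$ restricting to a depth function of $G'$ (Definition~\ref{def:depth-function}). By items~(3) and~(4) of Proposition~\ref{prop:removing-cliques} the vertex colors split into those occurring in $\bigcup_{C\in\cC}V^\star(C)$ and those occurring in $G'$, and the edge colors split into the colors used inside the cliques of $\cC$ and those of $G'$; moreover a pendant-edge color (incident to some $v_C$) never coincides with an edge color internal to $V^\star(C)$. Consequently the hypothesis on $\cA,\cB$ separates into a ``top'' part and a ``$G'$'' part. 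By Lemmas~\ref{lemma:up-and-down} and~\ref{lemma:path-clique-edge} every shortest path meets $\bigcup_C V^\star(C)$ only in its endpoints, so each path $P$ decomposes canonically into a \emph{core} $P\cap G'$, a shortest path of $G'$, decorated by at most one \emph{pendant} up-edge into a top clique at each end. The $G'$-color data of $\cA$ is exactly the color data of its multiset of cores, so the cores of $\cA$ and of $\cB$ carry equal color multisets in $G'$.

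Next I match the cores. Applying the induction hypothesis in $G'$ produces a sequence of moves taking the cores of $\cA$ to those of $\cB$, and I lift each move to $G$. A move~(1) replacing a core by an isomorphic one is realized by a quasi-automorphism of $G$ furnished by Lemma~\ref{lemma:moving-paths} with $P'$ the whole decorated path; since it preserves the edge colors along $P'$ it sends each pendant to an isomorphic pendant, so the decorated path is merely replaced by an isomorphic one. A move~(2) swapping two cores at $x\in G'$ lifts to the swap of the decorated paths at the same $x$: because $\kappa_G$ restricts to $\kappa_{G'}$ and pendants sit strictly above their attachment vertices, the upward part at $x$ of a decorated path is the core's upward part extended by the pendant at that end, so the $G$-swap exchanges exactly the core upward parts and carries each pendant along with the half that contains it. After these lifted moves $\cA$ and $\cB$ have identical multisets of cores.

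It remains to reconcile the top decorations, and I expect this to be the main obstacle. The attachment vertices all share the single color $\lambda(v_C)$ and carry isomorphic bouquets of top cliques (items~(2),(5),(6) of Proposition~\ref{prop:removing-cliques}), and the usable moves are: move~(1), which redirects a pendant to any target of the same color in any clique at the same attachment vertex; a swap at an attachment vertex, which exchanges the pendants—or a pendant and an empty upward part—of two paths whose cores end there, reshuffling pendants among the slots at that vertex; and moves internal to a top clique for the paths of empty core. By the disjointness above, the counts of pendants, of internal top edges, and of isolated top vertices all agree for $\cA$ and $\cB$. The difficulty is that the fixed top budget must be redistributed among several attachment vertices, whereas swaps at one attachment vertex only move pendants between co-located slots; relocating top material across different attachment vertices appears to require swaps at common apexes, which also alter the cores. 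The way around this is presumably to abandon the strict ``cores first, decorations second'' order and instead reach a single normal form, treating each path as two upward arms joined at its apex (each arm already carrying its pendant as its top edge), using apex-swaps on whole arms and re-establishing the core structure afterwards by the induction hypothesis; triangle-regularity enters here in full through the uniformity statements~(2),(5),(6). A final point requiring care is that the swap is undefined at a strict apex of a path, where both sides ascend, so each swap invoked above must be checked to occur at a vertex that is not such an apex—which holds for all the swaps used, since their upward parts are genuine.
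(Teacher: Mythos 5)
Your proposal contains a genuine gap, and you flag it yourself: the reconciliation of the top decorations after the cores have been matched is never actually carried out. Everything before that point is essentially sound---the base case, the core/pendant decomposition via Lemma~\ref{lemma:path-clique-edge}, and the lifting of moves from $G\setminus\cC$ to $G$ via Lemma~\ref{lemma:moving-paths} (the paper's own proof uses the very same lifting device)---but the decisive step is replaced by the word ``presumably.'' The difficulty you run into is real, not cosmetic. Once the cores of $\cA$ and $\cB$ agree, a pendant edge of $\cA$ may sit at the wrong end of its core, or on a different core than the one carrying the matching pendant in $\cB$; move~(1) alone cannot fix this, since it moves a whole path rigidly, and the only other tool, a swap at an attachment vertex shared by two paths, exchanges the entire ascending halves of those paths and thereby destroys the core matching you just established. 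Re-invoking the induction hypothesis in $G\setminus\cC$ to repair the cores can in turn undo the pendant placement, and you give no invariant or termination argument showing this back-and-forth converges. That argument is precisely the content of the theorem; the ``single normal form'' of arms joined at apexes that you gesture at is never defined, and (as you also note) the swap is not even defined at a vertex where both sides of a path strictly ascend, so it is not clear the apex-swaps you would need are legal moves. A smaller additional wrinkle: paths lying entirely inside a clique of $\cC$ (a single top vertex, or an edge with both endpoints in some $V^\star(C)$) have no core that is a shortest path of $G'$, so they fall outside your core-plus-pendants bookkeeping and must be treated separately.

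The paper avoids this trap by choosing a different inductive weight: a double induction on the cardinality of $\cA$ and on the total length of its paths, rather than on the number of maximal cliques. It first disposes of the degenerate deepest configurations (a deepest edge with both endpoints of maximal depth, or an isolated deepest vertex), then strips a \emph{single} deepest edge $e$ with its depth-$k$ endpoint $u$ from one path $P\in\cA$ and the corresponding edge $e'$ from a path $P'\in\cB$, applies the induction hypothesis to the truncated multisets, lifts the resulting moves back to $\cA$ while carrying $e,u$ along (your lifting step, almost verbatim), and finally reattaches with at most one move of type~(1) and one swap at $v'$, which transfers $u',e'$ onto the path equal to $\widetilde{P}'$. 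Handling one pendant at a time, with total length as the decreasing quantity, makes the global redistribution problem you got stuck on disappear; if you want to salvage your argument, this is the restructuring it needs.
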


\begin{proof}
We proceed by a double induction: on the cardinality of $\cA$, and then
on the sum of the lengths of the paths in $\cA$.  If all paths in $\cA$
have length zero, then the multiset of edge colors of paths in $\cA$
is empty and hence the same holds for $\cB$. In this case, the theorem
clearly holds. 

Otherwise, let $k$ be the maximum depth among all edges appearing in
paths of $\cA$. Such an edge either has two vertices of depth $k$ or
else one vertex of depth $k$ and one vertex of depth $k-1$.

If the former case applies to some $e$, then any path $P$ in
$\cA$ containing a copy of $e$ consists of $e$ and its vertices
only. By the assumptions on $\cA$ and $\cB$, $\cB$ has a path
$P'$ that contains an edge $e'$ with $\lambda(e')=\lambda(e)$. By
Proposition~\ref{prop:removing-cliques}, $P'$ equals $(u',f',v')$ and
is therefore isomorphic to $P$. The theorem now reduces to the smaller
instance with $\cA \setminus \{P\}$ and $\cB \setminus \{P'\}$, and we
are done by induction.

Suppose that there strictly more vertices of depth $k$ in paths in $\cA$
than there are edges of this depth. Then some path $P$ in $\cA$ consists
of a single depth-$k$ vertex $u$. The assumptions on $\cB$ and $\cA$
imply that among the vertices in paths in $\cB$ there are more of color
$\lambda(u)$ than there are edges in paths in $\cB$ with an endpoint of
color $\lambda(u)$. Hence $\cB$, too, has a path $P'$ consisting of a
single vertex $u'$ with $\lambda(u)=\lambda(u')$. Removing $P$ from $\cA$
and the isomorphic path $P'$ from $\cB$, we are done by induction.

So we may assume that all edges of depth $k$ in paths in $\cA$ connect
a vertex of depth $k$ to a vertex of depth $k-1$, and that all paths in
$\cA$ with a vertex $u$ of depth $k$ have $u$ as endpoint and contain
at least one edge, connecting $u$ to a vertex of depth $k-1$. The same
statements then apply to $\cB$. 

Pick an edge $e$ of depth $k$ in a path $P$ in $\cA$, and let $u,v$
be its vertices of depth $k,k-1$, respectively. Then $\cB$ contains
a path $P'$ with an edge $e'$ with $\lambda(e')=\lambda(e)$, and it
follows that $\kappa(e')=k$, the vertex $u'$ of $e'$ of depth $k$ is
the endpoint of $P'$, and the other vertex $v'$ of $e'$ has depth $k-1$.

Now remove $e$ and $u$ from $P$ to obtain $\Tilde{P}$ and $e$ and $u'$
from $P'$ to obtain $\Tilde{P}'$, and let $\Tilde{\cA}$ and $\Tilde{\cB}$
be the shortest path collections obtained by replacing $P$ in $\cA$
by $\Tilde{P}$ and $P'$ in $\cB$ by $\Tilde{P}'$. As the sum of the
lengths of paths in $\cA$ has shrunk by $1$, the induction
hypothesis says that we can make $\Tilde{\cA}$ equal to $\Tilde{\cB}$ by a sequence
of moves.

We carry out the same moves for $\cA$ in the following sense. In moves
of type (1), whenever $\Tilde{P}$ is replaced by an isomorphic path
$\Tilde{Q}$, by Lemma~\ref{lemma:moving-paths} there is a shortest path
$Q$ in $G$ isomorphic to $P$ containing $\Tilde{Q}$, and we replace $P$
in $\cA$ by $Q$. In the process we update $e$ and $u$ and $v$
accordingly, so that $P$ still starts with $(u,e,v)$. And in a move of type (2)
that involves $\Tilde{P}$ and another path $Q$, say with $v$ an endpoint of $\Tilde{P}_i$, we update $\Tilde{P}$ to the new path containing $\Tilde{P}_i$,
so that $\tilde{P}$ again has $v$ as one endpoint, and in $\cA$ we 
replace $(P,Q)$ by the swap of $(P,Q)$ at $x$; then the new $P$ is the
new $\tilde{P}$ with $e$ and $u$ attached. 

After these moves, $\cA$ and $\cB$ become equal as multisets when
we remove $u,e$ from $P$ and $u',e'$ from $P'$; we continue to write
$\Tilde{P}$ and $\Tilde{P}'$, respectively, for these truncated paths.
If $\Tilde{P}=\Tilde{P}'$, then $P$ and $P'$ are isomorphic (this needs
Lemma~\ref{lemma:symmetry} when both endpoints of $\Tilde{P}$ have the
color $\lambda(v)$). In that case, we can remove $P$ from $\cA$ and $P'$
from $\cB$, and we are done by induction.  Otherwise, let $Q \in \cA$
be an element equal to $\Tilde{P}'$. In particular, $Q$ has $v'$ as an
endpoint.  Replace $P \in \cA$ by an isomorphic path that contains
$u',e',v'$. Then replace $(P,Q)$ in $A$ by its swap at $v'$; this has
the effect that $u'$ and $e'$ are removed from $P$ and attached to $Q$,
so that $Q$ becomes equal to $P'$. After removing $Q$ from $\cA$ and $P'$
from $\cB$, we are done by induction.
\end{proof}

\begin{theorem}{\cite[Theorem~1 \& Theorem~5]{MS21}}\label{thm:block-graph-generators}
    Let $G_0$ be a connected uncolored block graph. Then 
    $$
        I_{G_0} = \langle \sigma_{i,j}\sigma_{k,l}-\sigma_{i,k}\sigma_{j,l}\mid E(i\leftrightarrow j)\cup  E(k\leftrightarrow l)= E(i\leftrightarrow k)\cup  E(j\leftrightarrow l)\rangle
    $$
\end{theorem}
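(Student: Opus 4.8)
The plan is to prove the two inclusions at once by first identifying $I_{G_0}$ with the kernel of the monomial map $\psi_{G_0}$ (note that an uncolored block graph is automatically triangle-regular, so this is just the uncolored instance of the identity $I_G=\ker\psi_G$ that the whole section is building towards), and then feeding the combinatorial engine Theorem~\ref{thm:sets-of-paths} to read off the generators.

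First I would produce a monomial parametrization of the model. Since $G_0$ is a block graph, every internal vertex $c$ of the unique shortest path $i\leftrightarrow j$ is a cut vertex separating $i$ from $j$, so the global Markov property of the Gaussian model yields the conditional-independence relation $\sigma_{ij}\sigma_{cc}=\sigma_{ic}\sigma_{cj}$ on $\mathcal{L}_{G_0}^{-1}$. Iterating this along the path gives the closed form
\[
  \sigma_{ij}=\frac{\prod_{e\in E(i\leftrightarrow j)}\sigma_e}{\prod_{v}\sigma_{vv}},
\]
the denominator running over the internal vertices of $i\leftrightarrow j$. Reading the free entries $s_v:=\sigma_{vv}$ and $s_e:=\sigma_e$ (for edges $e$) as independent coordinates, this exhibits the coordinate functions $\sigma_{ij}$ as Laurent monomials in the $s_v,s_e$, i.e.\ as a monomial map $\phi$. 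As the $s_v,s_e$ are themselves among the coordinates and are algebraically independent on the model (whose dimension equals $|V|+|E|=\dim\mathcal{L}_{G_0}$), the parametrization is dominant with generically injective behavior, so $I_{G_0}=\ker\phi$. Finally, the substitution $s_v\mapsto t_v$ and $s_{\{a,b\}}\mapsto t_a t_b t_{\{a,b\}}$ is an invertible monomial change of coordinates carrying $\phi$ to $\psi_{G_0}$; being injective on the Laurent parameter ring, it gives $\ker\phi=\ker\psi_{G_0}$, hence $I_{G_0}=\ker\psi_{G_0}$.

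It then remains to identify the generators of the toric ideal $\ker\psi_{G_0}$. A binomial $\sigma^\alpha-\sigma^\beta\in\ker\psi_{G_0}$ corresponds to a pair of multisets $\cA,\cB$ of shortest paths having the same multiset of vertices and the same multiset of edges; this is precisely the hypothesis of Theorem~\ref{thm:sets-of-paths}. Because all colors are distinct, its move (1) collapses to the identity (an ``isomorphic'' shortest path is literally the same path), so $\cA$ can be turned into $\cB$ using swaps alone. Each swap $(P,Q)\mapsto((P_1,Q_2),(Q_1,P_2))$ at a meeting vertex recombines the four endpoints and preserves the edge multiset, so it realizes exactly one listed generator $\sigma_{i,j}\sigma_{k,l}-\sigma_{i,k}\sigma_{j,l}$, the edge condition in the statement being nothing but the invariance of edges under the swap. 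Telescoping along the sequence of swaps writes $\sigma^\alpha-\sigma^\beta$ in the ideal generated by the listed quadratics. Conversely, each listed quadratic lies in $\ker\psi_{G_0}$: its two terms share the endpoint multiset $\{i,j,k,l\}$ and, in a block graph, the edge condition forces the vertex multisets of the two path-unions to agree as well, since the multiplicity of a vertex on a path is determined by its edge-degree on the path together with whether it is an endpoint. Thus $\ker\psi_{G_0}$ equals the listed quadratic ideal.

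The hard part will be the first step: showing that $I_{G_0}$ is the toric ideal $\ker\psi_{G_0}$, i.e.\ that the Markov relations generate all of the vanishing ideal with no extra equations. The conditional-independence identities and their monomial consequences are routine, but the claim that the monomial parametrization is dominant onto $\mathcal{L}_{G_0}^{-1}$ of the correct dimension — so that $I_{G_0}$ has no relations beyond $\ker\phi$ — is where the genuine content lies; once the model is known to be toric, the combinatorial Theorem~\ref{thm:sets-of-paths} finishes the identification of generators essentially for free.
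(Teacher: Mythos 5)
Your proposal is correct, but note that the paper never proves this statement at all: it is imported from Misra--Sullivant \cite{MS21} (their Theorems 1 and 5) and used as a black box in Corollary~\ref{cor:kerPsiGGenerators}, so any proof is necessarily a ``different route.'' Your step 1 --- the factorization of $\sigma_{ij}$ along the unique shortest path via conditional independence at cut vertices, the dimension count $\dim \mathcal{L}_{G_0}^{-1}=|V|+|E|$ giving dominance, and the unimodular Laurent substitution $s_v\mapsto t_v$, $s_{\{a,b\}}\mapsto t_a t_b t_{\{a,b\}}$ identifying $\ker\phi$ with $\ker\psi_{G_0}$ --- reconstructs the content of MS21's Theorem 1; your step 2 then re-derives their Theorem 5 from the paper's own Theorem~\ref{thm:sets-of-paths}, which is legitimate and non-circular, since that theorem is proved in the paper without any reference to the present statement. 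In effect you show the paper could be made self-contained: in the uncolored (all colors distinct) case move (1) is indeed trivial, each swap realizes exactly one listed quadratic (Corollary~\ref{Cor:shortest-path-criterion} guaranteeing the swapped pairs are again shortest paths), and your observation that the edge condition plus the common endpoint multiset $\{i,j,k,l\}$ determines the vertex multiset (the multiplicity of a vertex is half the sum of its edge-degree and its endpoint count) is the right reason the listed quadratics lie in $\ker\psi_{G_0}$; this mirrors precisely how the paper combines the cited theorem with Theorem~\ref{thm:sets-of-paths} in Corollary~\ref{cor:kerPsiGGenerators}. Two points to tighten: the closed formula for $\sigma_{ij}$ should be restricted to $i\neq j$ (a trivial path contributes its own coordinate $\sigma_{ii}$, not the empty product); and you should either cite the Gaussian global Markov property together with the Zariski density of the positive-definite locus in $\mathcal{L}_{G_0}$, or derive the factorization internally from Proposition~\ref{prop:Sigmaij} or Theorem~\ref{thm:Talaska-2012}, so that the argument does not silently import a statistical result.
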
   
\begin{corollary}
\label{cor:kerPsiGGenerators}
    The binomial ideal $\ker (\psi _G)$ is generated by $I_{\hat G}$ and $I_{G_0}$, where $\hat G$ is the completion of $G$ and $G_0$ is the underlying uncolored graph of $G$.
\end{corollary}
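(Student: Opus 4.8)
The plan is to establish the two inclusions $\ker(\psi_G) \subseteq \langle I_{\hat G}, I_{G_0}\rangle$ and $\langle I_{\hat G}, I_{G_0}\rangle \subseteq \ker(\psi_G)$ separately. Since $\psi_G$ sends each variable $\sigma_{i,j}$ to a monomial in the $t_\ell$, its kernel is generated by binomials $\sigma^\alpha - \sigma^\beta$ with $\psi_G(\sigma^\alpha) = \psi_G(\sigma^\beta)$, and I would first record this standard fact about monomial maps. The dictionary to use throughout is that a monomial $\sigma^\alpha = \prod_s \sigma_{i_s,j_s}$ corresponds to the multiset $\cA = \{\, i_s \leftrightarrow j_s \,\}_s$ of shortest paths, and that the exponent of $t_\ell$ in $\psi_G(\sigma^\alpha)$ counts the total number of vertices, resp.\ edges, of color $\ell$ occurring among the paths of $\cA$. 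Because the vertex colors are disjoint from the edge colors, $\psi_G(\sigma^\alpha) = \psi_G(\sigma^\beta)$ is equivalent to $\cA$ and the multiset $\cB$ attached to $\sigma^\beta$ having the same multiset of vertex colors and the same multiset of edge colors --- which is exactly the hypothesis of Theorem~\ref{thm:sets-of-paths}.

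For $\ker(\psi_G) \subseteq \langle I_{\hat G}, I_{G_0}\rangle$, I would take a generating binomial $\sigma^\alpha - \sigma^\beta$ as above and apply Theorem~\ref{thm:sets-of-paths} to transform $\cA$ into $\cB$ by a finite sequence of moves of types (1) and (2). The key point is that each move alters the associated monomial by a binomial lying in $\langle I_{\hat G}, I_{G_0}\rangle$. A move of type~(1) replaces a factor $\sigma_{i,j}$ by $\sigma_{i',j'}$ with $i\leftrightarrow j$ isomorphic to $i'\leftrightarrow j'$, hence with $\Lambda(i\leftrightarrow j) = \Lambda(i'\leftrightarrow j')$, so that $\sigma_{i,j} - \sigma_{i',j'} \in I_{\hat G}$ by Corollary~\ref{cor:completion-eqs}. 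A move of type~(2), the swap of $(P,Q) = (a\leftrightarrow b,\, c\leftrightarrow d)$ at a meeting vertex $x$ (with $P_1$ from $a$ to $x$ and $P_2$ from $x$ to $b$, and similarly for $Q$), replaces the factor $\sigma_{a,b}\sigma_{c,d}$ by $\sigma_{a,d}\sigma_{c,b}$; the defining identity $E(P)\sqcup E(Q) = E(P_1,Q_2)\sqcup E(Q_1,P_2)$ of the swap shows that $\sigma_{a,b}\sigma_{c,d} - \sigma_{a,d}\sigma_{c,b}$ satisfies the edge-multiset condition of Theorem~\ref{thm:block-graph-generators} and is therefore a generator of $I_{G_0}$. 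Telescoping the differences along the sequence of moves yields $\sigma^\alpha - \sigma^\beta \in \langle I_{\hat G}, I_{G_0}\rangle$.

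For the reverse inclusion it suffices to check that every listed generator lies in $\ker(\psi_G)$. For the linear generators of $I_{\hat G}$ this is immediate, since $\Lambda(i\leftrightarrow j) = \Lambda(k\leftrightarrow l)$ forces $\psi_G(\sigma_{i,j}) = \psi_G(\sigma_{k,l})$. For a quadratic generator $\sigma_{i,j}\sigma_{k,l} - \sigma_{i,k}\sigma_{j,l}$ of $I_{G_0}$ the edge-color multisets already agree, so the only remaining point is that the two vertex-color multisets agree as well. I would deduce this by showing that the edge-multiset condition $E(i\leftrightarrow j)\sqcup E(k\leftrightarrow l) = E(i\leftrightarrow k)\sqcup E(j\leftrightarrow l)$ forces the four geodesics into a swap configuration: analysing the Steiner tree of $\{i,j,k,l\}$ in the block graph, the condition holds exactly when either the four shortest paths share a single median vertex, or they share a common central subpath that is traversed twice on each side, and in both cases the pairings $(i\leftrightarrow j,\, k\leftrightarrow l)$ and $(i\leftrightarrow k,\, j\leftrightarrow l)$ are swaps of one another. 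Since a swap preserves the total multiset of vertices $V(P)\sqcup V(Q)$ (the meeting vertex being counted twice on each side), the vertex-color multisets coincide and the generator maps to zero.

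The routine parts are the two translations of moves into ideal generators and the binomial-generation of the kernel of a monomial map. The main obstacle I anticipate is the final step of the reverse inclusion: confirming that the purely edge-theoretic condition defining the generators of $I_{G_0}$ genuinely forces a swap configuration, so that the vertex colors are matched automatically. This is a short but slightly delicate quartet analysis in block graphs, where one must verify that the degenerate (shared-median) and non-degenerate (shared-central-subpath) cases are the only ones compatible with the edge condition. An alternative that sidesteps this analysis is to observe that $\psi_G$ factors as $\phi\circ\psi_{G_0}$ through the finer map $\psi_{G_0}$ attached to the all-distinct coloring, so that $\ker(\psi_{G_0}) \subseteq \ker(\psi_G)$, and then to invoke $I_{G_0} \subseteq \ker(\psi_{G_0})$ from the uncolored case of \cite{MS21}; I would present whichever route is cleaner.
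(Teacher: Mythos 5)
Your main argument coincides with the paper's own proof: the paper likewise observes that $\ker(\psi_G)$, being the kernel of a monomial map, is generated by binomials with equal $\psi_G$-images, translates such a binomial into two multisets $\cA,\cB$ of shortest paths having equal vertex-color and edge-color multisets, invokes Theorem~\ref{thm:sets-of-paths}, and identifies moves of type (1) with the linear generators of $I_{\hat G}$ and moves of type (2) with the quadratic generators of $I_{G_0}$. That telescoping argument is essentially the entire proof given in the paper; the reverse inclusion $I_{\hat G}+I_{G_0}\subseteq\ker(\psi_G)$ is left implicit there, so your decision to address it explicitly is reasonable diligence rather than a deviation.

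The one soft spot in your proposal is precisely that added step. Your primary route---showing that the edge-multiset condition forces the two pairings to be swaps of one another---is both delicate and stronger than what is true under the paper's definitions: the swap of $(P,Q)$ at $x$ is a single pair determined by the depth function, whereas several pairings can satisfy the edge condition simultaneously. For a star with center $c$ and leaves $i,j,k,l$, all three pairings of the leaves have edge multiset $\{ic,jc,kc,lc\}$, so they cannot pairwise be ``the'' swap of one another. Fortunately, no configuration analysis is needed. For any path $P$ with endpoint multiset $\{a,b\}$ (where $a=b$ is allowed for length-zero paths), the multiset of vertices of $P$, each counted twice, equals the multiset union of the endpoints of the edges of $P$ together with $\{a,b\}$. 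Summing this identity over the two paths of each pairing, both sides of a generator of $I_{G_0}$ have the same total endpoint multiset $\{i,j,k,l\}$ and, by hypothesis, the same total edge multiset; hence they have the same total vertex multiset, and the generator lies in $\ker(\psi_G)$. Your fallback---factoring $\psi_G$ through $\psi_{G_0}$ and quoting $I_{G_0}=\ker(\psi_{G_0})$ from \cite{MS21}---is also sound, but note that Theorem~\ref{thm:block-graph-generators} as quoted in the paper records only the generation statement, so that route invokes more of \cite{MS21} than the paper itself does.
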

\begin{proof}
     The kernel $\ker(\psi_G)$ is generated by binomials of the form 
     \[\sigma_{i_1,j_1}\dots \sigma_{i_m,j_m} - \sigma_{k_1,l_1}\dots \sigma_{k_m,l_m}.\]
     Using the definition of $\psi _G$, the multiset of colors of all edges and vertices along each shortest path $i_1\leftrightarrow j_1$, $\dots$, $i_m\leftrightarrow j_m$ is the same as the one of $k_1\leftrightarrow l_1$, $\dots$, $k_m\leftrightarrow l_m$. Let $\mathcal{A} = \lbrace i_1\leftrightarrow j_1,\dots,i_m\leftrightarrow j_m \rbrace$ and $\mathcal{B} = \lbrace k_1\leftrightarrow l_1,\dots,k_m\leftrightarrow l_m \rbrace$. Applying a generator of $I_{\hat G}$ corresponds to a replacement of type (1) in Theorem \ref{thm:sets-of-paths}. A replacement of type (2) corresponds to a generator of $I_{G_0}$. Hence, Theorem \ref{thm:sets-of-paths} completes the proof.
\end{proof}

To finish, we follow the same argument as in \cite{CMMPS23}. 

\begin{corollary}{\cite[Theorem~7.7]{CMMPS23}}
    Let $G$ be a triangle-regular block graph. Then  $I_G$ is the union of $I_{\hat G}$ and $I_{G_0}$, where $\hat G$ is the completion of $G$ and $G_0$ is the underlying uncolored graph of $G$.
\end{corollary}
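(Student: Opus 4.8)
The plan is to deduce the corollary from the single identity $I_G=\ker(\psi_G)$. Once this is established, Corollary~\ref{cor:kerPsiGGenerators} immediately gives that $I_G$ is generated by $I_{\hat G}$ together with $I_{G_0}$, which is exactly the claim. Both ideals are prime: $I_G=\ker(\rho_G)$ is the kernel of a homomorphism into the field $\R(K)$, while $\ker(\psi_G)$ is the kernel of a homomorphism into the domain $\R[t_\ell\mid \ell\in\lambda(V(G)\cup E(G))]$. So it suffices to prove the inclusion $\ker(\psi_G)\subseteq I_G$ and then upgrade it to an equality by a dimension count.

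For the inclusion I would argue geometrically, avoiding any manipulation of the explicit generators. The colored graph $G$ and its completion $\hat G$ share the same vertex set, and since the completion assigns only \emph{new} colors to the non-edges of $G$, every $K\in\mathcal L_G$ automatically lies in $\mathcal L_{\hat G}$: the defining equalities of $\mathcal L_{\hat G}$ among the original colors are inherited from $\mathcal L_G$, while the equalities among the new colors hold trivially because all the corresponding entries of $K$ vanish. Thus $\mathcal L_G\subseteq\mathcal L_{\hat G}$, whence $\mathcal L_G^{-1}\subseteq\mathcal L_{\hat G}^{-1}$ and therefore $I_{\hat G}\subseteq I_G$. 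The same reasoning applied to the underlying uncolored graph---where $\mathcal L_{G_0}$ is cut out only by the support conditions and $\mathcal L_G\subseteq\mathcal L_{G_0}$ is obtained by additionally imposing the color equalities---gives $I_{G_0}\subseteq I_G$. Combining these two inclusions with Corollary~\ref{cor:kerPsiGGenerators} yields $\ker(\psi_G)\subseteq I_G$.

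To promote this to an equality I would compare Krull dimensions. On one side, $V(I_G)=\mathcal L_G^{-1}$ is the closure of the image of $\mathcal L_G$ under the birational involution $K\mapsto K^{-1}$, so $\dim V(I_G)=\dim\mathcal L_G$, which equals the number of distinct colors, since a matrix in $\mathcal L_G$ is determined by one free parameter per color class. On the other side, $\dim V(\ker\psi_G)$ equals the transcendence degree of the subalgebra of $\R[t_\ell]$ generated by the monomials $\psi_G(\sigma_{ij})$; its fraction field contains every $t_\ell$, because $\psi_G(\sigma_{ii})=t_{\lambda(i)}$ recovers each vertex color and the ratio $\psi_G(\sigma_{ij})/\bigl(\psi_G(\sigma_{ii})\psi_G(\sigma_{jj})\bigr)=t_{\lambda(\{i,j\})}$ recovers each edge color. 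Hence this transcendence degree is again the number of colors. As $\ker(\psi_G)\subseteq I_G$ are two primes whose quotient rings have equal dimension, the containment cannot be strict, and $I_G=\ker(\psi_G)$.

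The only genuinely delicate point is the bookkeeping in the dimension count: one must confirm that distinct colors really do contribute independent parameters to $\mathcal L_G$ (so that $\dim\mathcal L_G$ is \emph{exactly} the number of colors, not fewer), and that every color variable $t_\ell$ appears in the image of $\psi_G$. Everything else is formal. I note that one could instead prove $I_G\subseteq\ker(\psi_G)$ by a direct leading-term argument: substituting $y_{\lambda(v)}=1-t_{\lambda(v)}^{-1}$ and $y_{\lambda(e)}=s\,t_{\lambda(e)}$ into the power series of Proposition~\ref{prop:Sigmaij} and using the \emph{uniqueness} of shortest paths in block graphs, the lowest-order coefficient in $s$ of $\rho_G(\sigma_{ij})$ is precisely $\psi_G(\sigma_{ij})$; but turning this per-coordinate observation into a statement about the whole ideal requires a filtration argument that the dimension count neatly sidesteps.
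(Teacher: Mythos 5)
Your proof is correct and follows essentially the same route as the paper's: reduce to $I_G=\ker(\psi_G)$ via Corollary~\ref{cor:kerPsiGGenerators}, deduce $\ker(\psi_G)\subseteq I_G$ from the inclusions $\mathcal{L}_G\subseteq\mathcal{L}_{\hat G}$ and $\mathcal{L}_G\subseteq\mathcal{L}_{G_0}$, and conclude by primality of both ideals together with a dimension count equal to the number of colors. Your only deviation is that you compute both dimensions explicitly (recovering each $t_\ell$ from the image of $\psi_G$, and using the birational involution $K\mapsto K^{-1}$ for $I_G$), whereas the paper gets the lower bound on $\dim V(\ker\psi_G)$ from the containment itself; this is bookkeeping, not a different argument.
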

\begin{proof}
Using Corollary~\ref{cor:kerPsiGGenerators}, we are left to show that $I_G = \ker(\psi _G)$. The linear space $\mathcal{L}_G$ can be constructed from $\mathcal{L}_{\hat G}$ by coloring the graph and from $\mathcal{L}_{G_0}$ by removing edges, so $\mathcal{L}_G \subset \mathcal{L}_{\hat G}, \mathcal{L}_{G_0}$. Therefor $I_{G_0}, I_{\hat G} \subset I_G$, and hence $\ker (\psi _G) = I_{G_0} + I_{\hat G} \subset I_G $. Both ideals are prime, so it suffices to show that they have the same dimension to conclude that they are equal. We have $\dim (\ker (\psi _G)) \geq \dim (I_G)$. The number of colors is an upper bound for the dimensions of $I_G$ and $\ker(\psi _G)$, and this upper bound is attained in $I_G$. Hence the dimensions are equal.
\end{proof}

\bibliographystyle{alpha}\bibliography{toricbib}

\begin{thebibliography}{CMMS23}

\bibitem[Broa]{Brouwer_coherent}
Andries~E. Brouwer.
\newblock Coherent configurations.
\newblock \verb+https://aeb.win.tue.nl/2WF02/cohconf.pdf+.

\bibitem[Brob]{SRG}
Andries~E. Brouwer.
\newblock Parameters of strongly regular graphs.
\newblock \verb+https://aeb.win.tue.nl/graphs/srg/srgtab.html+.

\bibitem[Cam03]{Cameron03_coherent_conf}
Peter~J. Cameron.
\newblock Coherent configurations, association schemes and permutation groups.
\newblock In {\em Groups, combinatorics \& geometry ({D}urham, 2001)}, pages
  55--71. World Sci. Publ., River Edge, NJ, 2003.

\bibitem[CMMS23]{CMMPS23}
Jane~I. Coons, Aida Maraj, Pratik Misra, and Miruna-\c{S}tefana Sorea.
\newblock Symmetrically colored gaussian graphical models with toric vanishing
  ideals.
\newblock {\em SIAM Journal on Applied Algebra and Geometry}, 7(1):133--158,
  2023.

\bibitem[DS98]{Diaconis98}
Persi Diaconis and Bernd Sturmfels.
\newblock Algebraic algorithms for sampling from conditional distributions.
\newblock {\em Ann. Stat.}, 26(1):363--397, 1998.

\bibitem[Jen88]{Jensen88}
Soren~Tolver Jensen.
\newblock {Covariance Hypotheses Which are Linear in Both the Covariance and
  the Inverse Covariance}.
\newblock {\em The Annals of Statistics}, 16(1):302 -- 322, 1988.

\bibitem[KMR19]{KRM2019properjordanschemesexist}
Mikhail Klin, Mikhail Muzychuk, and Sven Reichard.
\newblock Proper jordan schemes exist. first examples, computer search,
  patterns of reasoning. an essay, 2019.

\bibitem[KV24]{Kahle24}
Thomas Kahle and Julian Vill.
\newblock Efficiently deciding if an ideal is toric after a linear coordinate
  change.
\newblock 2024.
\newblock Preprint, \verb+arXiv:2408.14323+.

\bibitem[Mic13]{Michalek12}
Mateusz Michalek.
\newblock Constructive degree bounds for group-based models.
\newblock {\em J. Combin. Theory Ser. A}, 120(7):1672--1694, 2013.

\bibitem[MP23]{Maraj23}
Aida Maraj and Arpan Pal.
\newblock Symmetry lie algebras of varieties with applications to algebraic
  statistics.
\newblock 2023.
\newblock Preprint, \verb+arXiv:2309.10741+.

\bibitem[MS21]{MS21}
Pratik Misra and Seth Sullivant.
\newblock Gaussian graphical models with toric vanishing ideals.
\newblock {\em Ann. Inst. Statist. Math.}, 73(4):757--785, 2021.

\bibitem[Nor15]{Noren12}
Patrik Nor\'en.
\newblock The three-state toric homogeneous {M}arkov chain model has {M}arkov
  degree two.
\newblock {\em J. Symb. Comput.}, 68(2):285--296, 2015.

\bibitem[SS05]{Sturmfels05b}
Bernd Sturmfels and Seth Sullivant.
\newblock Toric ideals of phylogenetic invariants.
\newblock {\em Journal of Computational Biology}, 12:204--228, 2005.

\bibitem[SU10]{Sturmfels10}
Bernd Sturmfels and Caroline Uhler.
\newblock Multivariate {Gaussians}, semidefinite matrix completion, and convex
  algebraic geometry.
\newblock {\em Ann. Inst. Stat. Math.}, 62(4):603--638, 2010.

\bibitem[Tal12]{talaska2012}
Kelli Talaska.
\newblock Determinants of weighted path matrices, 2012.

\end{thebibliography}

\end{document}